\theoremstyle{plain}
\newtheorem{thm}{Theorem}
\newtheorem{prop}{Proposition}[section]
\newtheorem{lem}[prop]{Lemma}
\newtheorem{cor}[prop]{Corollary}
\newtheorem{defi}[prop]{Definition}
\newtheorem{rmk}[prop]{Remark}
\newcommand {\R} {\mathbb{R}} 
 \newcommand {\N} {\mathbb{N}}
\newcommand {\dH} {d\mathcal{H}^n}
\newcommand {\p} {\partial}
\newcommand {\va} {\varphi}
\newcommand {\D} {\Delta}
\newcommand {\supp} {\text{supp}}
\newcommand{\LL}{\tilde{L}^2}
\DeclareMathOperator{\argmin}{argmin}
\DeclareMathOperator {\dist} {dist}
\DeclareMathOperator {\Ree} {Re}
\DeclareMathOperator {\Imm} {Im}
\begin{document}

\title[The Thin Obstacle Problem with Hölder Coefficients]{Optimal Regularity for the Thin Obstacle Problem with $C^{0,\alpha}$ Coefficients}
\author{Angkana R\"uland }
\author{Wenhui Shi}

\address{
Mathematical Institute of the University of Oxford, Andrew Wiles Building, Radcliffe Observatory Quarter, Woodstock Road, OX2 6GG Oxford, United Kingdom }
\email{ruland@maths.ox.ac.uk}

\address{
Department of Mathematics, University of Coimbra, Apartado 3008,
3001-501 Coimbra, Portugal }
\email{wshi@mat.uc.pt}

\begin{abstract}
In this article we study solutions to the (interior) thin obstacle problem under low
regularity assumptions on the coefficients, the obstacle and the underlying manifold. Combining the linearization method of
Andersson \cite{An16}
and the epiperimetric inequality from \cite{FS16}, \cite{GPSVG15}, we prove the optimal
$C^{1,\min\{\alpha,1/2\}}$ regularity of solutions in the presence of $C^{0,\alpha}$ coefficients
$a^{ij}$ and $C^{1,\alpha}$ obstacles $\phi$. Moreover we investigate
the regularity of the regular free boundary and show that it has the structure of a
$C^{1,\gamma}$ manifold for some $\gamma \in (0,1)$.
\end{abstract}

\subjclass[2010]{Primary 35R35}

\keywords{Variable coefficient Signorini problem, epiperimetric inequality, optimal regularity, free boundary regularity, low coefficient regularity, linearization technique}

\maketitle

\section{Introduction}
\subsection{Problem and  Setting}
In this article we prove the optimal regularity of solutions to the \emph{(interior) thin
obstacle problem} in a low regularity framework involving $C^{0,\alpha}$ coefficients and
$C^{1,\alpha}$, $\alpha\in (0,1)$, obstacles. Moreover, we address the regularity of
the \emph{regular} free boundary.\\

More precisely we consider the following set-up:
Let $\Omega\subset \R^{n+1}$ be a bounded domain with sufficiently smooth boundary.
Suppose that $\mathcal{M}$ is an $n$-dimensional submanifold, which decomposes $\Omega $
into two components $\Omega_+$ and $\Omega_-$. Let $\phi\in C^1(\overline{\Omega})$
be given with $\phi < 0$ on $\mathcal{M}\cap \p\Omega$.
We are interested in solutions $w\in \mathcal{K}_{\phi}$ to the variational inequality
\begin{align}
\label{eq:var_in}
\int_{\Omega}a^{ij}\p_iw \p_j(v-w) dx \geq 0 \text{ for any } v\in \mathcal{K}_{\phi},
\end{align}
where
\begin{align*}
\mathcal{K}_\phi=\{v\in W^{1,2}_0(\Omega): v\geq \phi \text{ on } \mathcal{M}\}.
\end{align*}
We assume that the coefficients $a^{ij}$ are bounded, measurable and uniformly elliptic.
Problems of this type for instance arise in the modeling of osmosis \cite{DL76}. The obstacle
is ``thin'' in the sense that it is only active on a co-dimension one manifold.\\

The variational inequality (\ref{eq:var_in}) possess a unique solution
$w\in \mathcal{K}_{\phi}$ \cite{LS69}.
Furthermore, $w$ can be represented as
\begin{align*}
w(x)=\int_\Omega G(x,y)d\mu
\end{align*}
where $G(x,y)$ is the Green's function to the operator $\p_ia^{ij}\p_j$
with respect to $\Omega$, and $\mu$ is a Radon measure associated with $w$ with $\supp\mu\subset \mathcal{M}\cap\{w=\phi\}$. By Evan's lemma (c.f. Theorem 2.1 in
\cite{LS69}), if $\mathcal{M}$ is a continuous manifold, then the solution $w$ is
continuous in $\Omega$. \\
This together with the representation formula implies that in the relatively open set
$\Omega\setminus (\mathcal{M}\cap \{w=\phi\})$, the solution $w$ to (\ref{eq:var_in})
satisfies the equation
\begin{equation}
\label{eq:off_fb}
\p_ia^{ij}\p_jw=0.
\end{equation}
Moreover, it obeys a weak form of \emph{complementary} or \emph{Signorini} boundary
conditions on $\mathcal{M}$, if $\mathcal{M}$ is for instance $C^1$ regular.
To make this precise, we define the following operators
\begin{equation}
\label{eq:normal}
\begin{split}
\p_{\nu_+}u(x)&:=\lim_{y\rightarrow x,\ y\in \Omega_+}\nu_+(y)\cdot A(y)\nabla u(y),\\
 \p_{\nu_-}u(x)&:=\lim_{y\rightarrow x,\ y\in \Omega_-}\nu_-(y)\cdot A(y)\nabla u(y),
\end{split}
\end{equation}
for functions $u \in C^1(B_\delta(x)\cap \overline{\Omega_\pm})$ for some $\delta>0$ and for coefficients $A(y):=(a^{ij}(y)) \in C(B_\delta(x), \R^{(n+1)\times (n+1)})$.
Here $\nu_+(x)$ and $\nu_-(x)$ are the outer unit normals to
$\Omega_+$ and $\Omega_-$ at $x\in \mathcal{M}$, respectively.
The solution $w\in \mathcal{K}_{\phi}$ to the variational
inequality (\ref{eq:var_in}) then satisfies a weak form of a jump condition for
the co-normal boundary derivative:
With the notation from (\ref{eq:normal}) it solves
$\p_{\nu^+}w+\p_{\nu^-}w\geq 0$ as a distribution on $\mathcal{M}\cap \Omega$, i.e. for all
$\eta\in W^{1,2}_0(\Omega)$ with $\eta\geq 0 \text{ on } \mathcal{M}\cap \Omega$ it holds
\begin{align*}
\int_{\mathcal{M}\cap \Omega} (\p_{\nu^+}w+\p_{\nu^-}w)\eta dx
:=\int_{\Omega} a^{ij}\p_j w \p_i \eta dx\geq 0.
\end{align*}
Furthermore, invoking \eqref{eq:off_fb}, we deduce that
\begin{align}\label{eq:sig}
\supp(\p_{\nu^+}w+\p_{\nu^-}w)\subset \mathcal{M}\cap \{w=\phi\}.
\end{align}

In the sequel, we seek to prove the local $C^{1,\alpha}$ regularity of $w$ under
suitable regularity assumptions on the coefficients $a^{ij}$,
the manifold $\mathcal{M}$ and the obstacle $\phi$.
To this end, we assume that for some $\alpha\in (0,1)$,
\begin{itemize}
\item the coefficients $a^{ij}$ are $C^{0,\alpha}$ regular,
\item $\mathcal{M}$ is $C^{1,\alpha}$ regular,
\item and $\phi:\mathcal{M}\rightarrow \R$ is $C^{1,\alpha}$ regular.
\end{itemize}
Proving a \emph{local} regularity result, we may further,
without loss of generality, assume that $\Omega=B_1$ and
$\mathcal{M}=B'_1:= \{x\in \R^{n}\times \{0\}: |x|\leq 1\}$.
Moreover, passing from $w$ to $\tilde{w}:= \eta(w - \phi)$, where $\eta\in C^\infty_0(B_1)$ is a cut-off function such that $\{w=\phi\}\Subset \{\eta=1\}$, and with slight abuse of notation
dropping the tildas in the notation, it suffices to study variational
inequalities of the form
\begin{equation}
\label{eq:vari_0}
\begin{split}
&\int_{B_1}a^{ij}\p_iw\p_j(v-w) dx+\int_{B_1}g^j\p_j(v-w) dx\geq 0 \\
&\text{ for any } v\in \{v\in W^{1,2}_0(B_1): v\geq 0 \text{ on } B'_1\}.
\end{split}
\end{equation}
with $a^{ij}\in C^{0,\alpha}(B_1)$ and $g^i \in C^{0,\alpha}(B_1)$.
Without loss of generality, we will furthermore assume the following normalization condition
\begin{equation}
\label{N}
\tag{N}
\begin{split}
& a^{ij}(0)=\delta^{ij},\  g^i(0)=0 \mbox{ and }\\
& a^{i, n+1}(x',0)= 0,\ i\in\{1,\cdots, n\},\text{ for all }(x',0)\in B_1'.
 \end{split}
\end{equation}
The first two equations can always be achieved by rotating, rescaling and by substracting a constant. For the off-diagonal normalization we rely on a change of coordinates introduced by Uraltseva (page 1183 in \cite{U89}). Here we note that the change of coordinates in \cite{U89} is stated for $W^{1,p}$, $p>n+1$, coefficients, but it is not hard to see from its proof that it also holds for $C^{0,\alpha}$ coefficients with $\alpha\in (0,1)$.\\

In the sequel, we study the regularity of $w$ in the set-up of (\ref{eq:vari_0}) and (N).
In addition, we investigate the \emph{free boundary} $\Gamma_w:= \partial \{x\in B_1': w(x)>0\}$,
which divides $B_1'$ into the \emph{contact set} $\Lambda_w:=\{x\in B'_1: w=0\}$ and the
\emph{non-conincidence set} $\Omega_w:=\{x\in B'_1: w>0\}$.

\subsection{Main results and ideas}
In this article we study the optimal regularity of the solution and the $C^{1,\gamma}$
regularity of the \emph{regular} free boundary. Our first main result in this context asserts
that also in the framework of coefficients and obstacles of low regularity, a similar optimal regularity result as in the isotropic situation is valid:

\begin{thm}\label{thm:opt_solution}
Let $w$ be a solution to \eqref{eq:vari_0} with $a^{ij}, g^i\in C^{0,\alpha}(B_1)$,
$\alpha\in (0,1)$, satisfying the normalization condition (N).
\begin{itemize}
\item[(i)] If $\alpha\in (0,1/2)\cup (1/2,1)$,
then $w\in C^{1,\min\{\alpha,1/2\}}(B_{1/2}^\pm)$. Moreover,
there exists a constant $C=C(n,\alpha,\|a^{ij}\|_{C^{0,\alpha}}, \|g^i\|_{C^{0,\alpha}})$
such that
\begin{align*}
\|w\|_{C^{1,\min\{\alpha,1/2\}}(B_{1/2}^\pm)}\leq C\|w\|_{L^2(B_1)}.
\end{align*}
\item[(ii)] If $\alpha=1/2$, then $w\in C^{1,\beta}(B_{1/2}^\pm)$ for any
$\beta\in (0,1/2)$. Moreover, there exists
$C=C(\beta, n,\|a^{ij}\|_{C^{0,\alpha}}, \|g^i\|_{C^{0,\alpha}})$ such that
\begin{align*}
\|w\|_{C^{1,\alpha}(B_{1/2}^\pm)}\leq C\|w\|_{L^2(B_1)}.
\end{align*}
\end{itemize}
\end{thm}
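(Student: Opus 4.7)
The plan is to combine Andersson's linearization strategy with the epiperimetric inequality of Focardi--Spadaro and Garofalo--Petrosyan--Smit Vega Garcia. The key observation is that under the normalization (N) the operator at the origin reduces to the standard Laplacian, so on scales $r$ the coefficients $a^{ij}(rx)$ and the inhomogeneity $g^i(rx)$ differ from their frozen values by $O(r^\alpha)$. Thus the variable coefficient problem appears, at each dyadic scale, as an $O(r^\alpha)$ perturbation of the isotropic Signorini problem, for which $C^{1,1/2}$ is the optimal exponent.

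First, I would establish a preliminary $C^{1,\gamma}$ estimate for some small $\gamma > 0$. Away from the contact set $\Lambda_w$ the function $w$ solves the divergence-form equation $\partial_i(a^{ij}\partial_j w + g^i) = 0$, so standard Schauder estimates give $C^{1,\alpha}$ interior regularity. Near the free boundary, I would use a blow-up/compactness argument: the rescalings $w_{r}(x) = w(rx)/\rho(r)$ satisfy an almost constant coefficient Signorini problem, and the uniform $C^{1,1/2}$ regularity of the isotropic problem, together with a Campanato-type iteration, yields a modest Hölder exponent $\gamma$.

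The core of the argument is to control $w$ at each $x_0\in\Gamma_w$. I would introduce an Almgren-type frequency function adapted to the variable coefficient setting and show it is almost monotone with error terms of order $r^\alpha$. A Weiss-type monotonicity formula then permits the classification of blow-up limits: regular free boundary points are those where the blow-up is a half-space solution of the form $c_{x_0}(x\cdot e_{x_0})_+^{3/2}$, while singular points have strictly larger homogeneity. For regular points, the epiperimetric inequality applied to the frozen-coefficient Weiss functional produces a geometric decay rate for the oscillation of the rescalings around the half-space limit, up to a perturbative error of order $r^\alpha$ coming from the coefficient variation. Iterating this decay yields the pointwise estimate $|w(x) - P_{x_0}(x)| \leq C |x-x_0|^{1+\min\{\alpha,1/2\}}$ at each regular $x_0$, where $P_{x_0}$ is the unique blow-up profile. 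Combined with the off-free-boundary Schauder bound and a covering argument, this produces part (i).

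The main obstacle is the critical case $\alpha=1/2$: there the epiperimetric decay rate and the coefficient-induced perturbation rate coincide, so the iteration loses a logarithmic factor at each scale, forcing the strict inequality $\beta<1/2$ in part (ii). A secondary difficulty is ensuring that singular free boundary points -- where the limiting frequency is strictly larger than $3/2$ -- do not spoil the pointwise Hölder norm; here one exploits the gap between $3/2$ and the next admissible frequency to show that $w$ in fact enjoys strictly better growth at such points, so they only contribute improved decay to the estimate.
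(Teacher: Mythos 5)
Your overall architecture --- reduce to the isotropic Signorini problem at the frozen-coefficient scale, treat the coefficient variation as an $O(r^\alpha)$ perturbation, use the epiperimetric inequality to get geometric decay of the Weiss energy, and identify $\alpha=1/2$ as the borderline case with a logarithmic loss --- is the right shape and matches the paper's philosophy. However, your proposal contains a step that fails and that the paper is specifically designed to circumvent: you propose to ``introduce an Almgren-type frequency function adapted to the variable coefficient setting and show it is almost monotone with error terms of order $r^\alpha$.'' This does not work for $C^{0,\alpha}$ coefficients. The almost-monotonicity of the truncated frequency function for the thin obstacle problem rests on Rellich/Pohozaev-type identities that produce error terms involving $\nabla a^{ij}$; controlling these requires at minimum $a^{ij}\in W^{1,p}$ with $p>n+1$ (as in Garofalo--Smit Vega Garcia) or a Carleman substitute (Koch--R\"uland--Shi), and the paper's introduction states explicitly that Hölder coefficients are \emph{out of scope} for both of these tools. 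There is no known mechanism, and your proposal does not supply one, by which merely $C^{0,\alpha}$ coefficients yield summable frequency error terms.

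What the paper does instead is replace the frequency function entirely by Andersson's linearization/compactness method: one shows directly that the $L^2$-normalized differences $w-\Pr(w,r)$ (and later $w-\overline{\Pr}(w,r)$) converge, along a contradiction subsequence, to a global solution of the constant-coefficient problem, and then a Liouville theorem (Lemma~\ref{lem:global}, proved via the Friedland--Hayman inequality) classifies the limit. The frequency function only appears \emph{after} the blow-up, applied to the constant-coefficient limit where its monotonicity is classical. The compactness needed to run this contradiction argument at the $3/2$-level is obtained from the decay of the Weiss energy $W(r,w)\lesssim r^\beta$ (Corollary~\ref{cor:perturb_epi}), which is derived from the epiperimetric inequality and minimality alone, with no frequency monotonicity anywhere. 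You would need to replace your frequency-function step with something of this kind; as written, that step is a gap.

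Two smaller points. First, your regular-point blow-up profile $c_{x_0}(x\cdot e_{x_0})_+^{3/2}$ is the profile for the \emph{classical} obstacle problem; for the thin obstacle problem the correct $3/2$-homogeneous solution is $c\,\Ree(x'\cdot\xi + i|x_{n+1}|)^{3/2}$. Second, your discussion of singular points as a ``secondary difficulty'' is unnecessary in the paper's scheme: Lemma~\ref{lem:negative} is stated so that the decay estimate is trivially true when $\|w\|_{\tilde L^2(\partial B_r)}< r^{3/2+\epsilon_0}$ (the degenerate/singular regime), so no frequency-gap argument is required; the uniform $r^{3/2}$ growth at all free boundary points (Lemma~\ref{lem:opt_growth}) follows directly.
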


Let us comment on this result: If $\alpha \in (0,1/2)$ and the coefficients and
inhomogeneities are $C^{0,\alpha}$ regular, the only limitations on the regularity of the
solution is given by the elliptic regularity results away from the free boundary. If
$\alpha>1/2$, the typical obstacle type behavior kicks in and the solution is intrinsically
limited in its regularity. Due to the presence of the model solution (for the isotropic problem,
in which $a^{ij}=\delta^{ij}$)
\begin{align*}
 h_{3/2}(x):= \Ree(x_1 + i |x_{n+1}|)^{3/2},
\end{align*}
the limitation to $C^{1,1/2}$ regularity is optimal. The situation $\alpha = \frac{1}{2}$
is interesting, as this exactly corresponds to the borderline case, in which the first non-analytic
``eigenfunction'' at the free boundary plays an important role limiting the regularity. As explained in Remark
\ref{rmk:3/2} we can improve the estimate given in Theorem \ref{thm:opt_solution} (ii) slightly,
by showing an only logarithmic loss with respect to the $C^{1,1/2}$ growth behavior, which
holds for $\alpha>1/2$. In Remark \ref{rmk:3/2} we present an example illustrating that this logarithmic loss is indeed optimal.\\

Our method of proof for Theorem \ref{thm:opt_solution} relies on a linearization technique, which was developed
by Andersson \cite{An16} in the context of the Lam\'e problem.
The central ideas are a perturbative analysis of
the problem around the homogeneous, constant coefficient global problem and the
exploitation of the
precise knowledge of its lowest order global solutions (in the form of Liouville type
results, c.f. Lemmas \ref{lem:class} and \ref{lem:global}).
In the main part of the argument, we thus consider the projection of solutions to the expected
global profiles at each scale, to derive a precise convergence rate towards these. This is achieved by a compactness argument, which investigates the renormalized error terms after each projection, and by the study of the limiting problem it leads to. 
Here a major ingredient is the epiperimetric inequality (c.f. Theorem \ref{thm:epi}), which was derived in \cite{FS16} and \cite{GPSVG15} following the approach of Weiss \cite{Weiss99} to the classical obstacle problem.
This energy based result allows us to view our variable coefficient
problem as a \emph{perturbation of the constant coefficient, homogeneous situation}. In particular,
it allows us to conclude a decay rate for the associated Weiss energy (c.f. Corollary \ref{cor:perturb_epi}),
which in turn yields the necessary compactness for Andersson's linearization technique. \\

Following these ideas, we prove Theorem~\ref{thm:opt_solution} in two steps. In the first
step (c.f. Section \ref{sec:almost_opt}), we develop an \emph{almost optimal} interior
regularity result for solutions
(up to an arbitrarily small $\epsilon$ loss) by identifying a convergence
rate of the solution to the leading order profile, $a(x_{n+1})_+ + b(x_{n+1})_-$, which is of linear growth
(c.f. Lemma \ref{lem:growth_a} and Proposition \ref{prop:Hoelder}).
In the second step (c.f. Section \ref{sec:opt_reg}), we establish a quantitative
convergence rate of $w-(a(x_{n+1})_+ + b(x_{n+1})_-)$ to
the next possible global profile $c \Ree(Qx'+ix_{n+1})^{3/2}$, $Q\in SO(n)$, $c\in \R_+$
(c.f. Lemma \ref{lem:negative}). To this end, we project the solution at each scale to the orthogonal complement of the above $3/2$ global profile and show that the projected functions decay substantially faster (c.f. Lemma~\ref{lem:negative}).
Here we use the epiperimetric inequality (c.f. Theorem \ref{thm:epi}) to obtain a geometric decay of the Weiss energy along the projected functions. This implies sufficient compactness for the linearization technique and hence enables us to obtain the convergence rate of the blow-up families towards the global profiles.\\

As the second major result of the article and as a by-product of the decay rate towards the
$3/2$-homogeneous global profiles,
we further deduce the local $C^{1,\gamma}$ regularity of the \emph{regular} free boundary,
if $\alpha>1/2$. Here the regular free boundary consists of all points with vanishing order
(in the sense of the theorem below) strictly less than $1+\alpha$.

\begin{thm}
Let $w$ be a solution to \eqref{eq:vari_0} with $a^{ij}, g^i\in C^{0,\alpha}(B_1)$,
$\alpha\in (1/2,1)$, satisfying the normalization condition (N).
Assume that $x_0\in \Gamma_w\cap B_{1/2}$ with
\begin{align*}
\liminf_{r\rightarrow 0_+}\frac{ \ln (\|w-\p_{n+1}w(x_0)x_{n+1}\|_{\tilde{L}^2(B_r)})}{\ln r}<1+\alpha.
\end{align*}
Then there exist $r_0\in (0,1/2)$ and $\gamma\in (0,1)$ depending on
$w,\alpha,n, \|a^{ij}\|_{C^{0,\alpha}}$ and $\|g^i\|_{C^{0,\alpha}}$,
such that $\Gamma_w\cap B_{r_0}(x_0)$ is a $C^{1,\gamma}$ regular $(n-1)$-dimensional
manifold.
\end{thm}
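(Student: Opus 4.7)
The plan is to exploit the quantitative decay towards the $3/2$-homogeneous global profile at regular free boundary points, established in Lemma \ref{lem:negative} on the way to Theorem \ref{thm:opt_solution}. The hypothesis on the $\liminf$, combined with the classification of admissible vanishing orders (which by Lemma \ref{lem:class} are essentially $\{3/2, 2, 5/2, \ldots\}$ after subtracting the linear part) and the fact that $\alpha > 1/2$ implies $1+\alpha < 2$, forces the blow-up of $w$ at $x_0$ to be of the form $c(x_0)\,\Ree(Q(x_0) x' + i x_{n+1})^{3/2}$ with $c(x_0) > 0$ and $Q(x_0) \in SO(n)$. The first step is then to show that this regular structure is an open property along $\Gamma_w$ with uniform constants: using the decay bound
\begin{equation*}
\bigl\| w(x_0 + r\cdot) - r\,\p_{n+1} w(x_0)\,x_{n+1} - c(x_0)\, r^{3/2} \Ree(Q(x_0) x' + i x_{n+1})^{3/2} \bigr\|_{\tilde{L}^2(B_1)} \le C r^{3/2+\delta},
\end{equation*}
continuity of $\p_{n+1} w$ from Theorem \ref{thm:opt_solution}, and semicontinuity of the Weiss energy inherited from Corollary \ref{cor:perturb_epi}, one propagates the same estimate (with $c(y) \ge c(x_0)/2 > 0$ and its own $Q(y)$) to every $y \in \Gamma_w \cap B_\rho(x_0)$ for a uniform $\rho > 0$.

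The core step is to establish a Hölder modulus for the rotation field $y \mapsto Q(y)$. For two nearby regular free boundary points $y, z \in \Gamma_w \cap B_\rho(x_0)$ I would choose the scale $r \simeq |y - z|$ and compare the rescalings of $w$ based at $y$ and at $z$ in $\tilde{L}^2(B_1)$. Each rescaling lies within $C r^{\delta}$ of a $3/2$-profile adapted to its own basepoint; the translation between basepoints contributes an extra error linear in $r$; and two non-degenerate $3/2$-profiles with rotations $Q, Q'$ and coefficients bounded below are separated in $\tilde{L}^2(B_1)$ by an amount comparable to $|Q - Q'|$. Combining these bounds yields
\begin{equation*}
|Q(y) - Q(z)| \le C \bigl(r^{\delta} + r\bigr) \le C |y - z|^{\gamma}
\end{equation*}
for some $\gamma \in (0,1)$ depending on $\delta$ and $c(x_0)$.

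Once the rotation field is Hölder continuous, the $3/2$-profile description identifies, at each $y \in \Gamma_w \cap B_{r_0}(x_0)$, a unique tangent half-space $\{Q(y) x' \ge 0\}$ for the non-coincidence set $\Omega_w$, in a quantitative flatness sense. A standard flatness-plus-Hölder-normal argument (or equivalently an implicit function theorem after straightening $Q(x_0)$ so that $Q(x_0) e_1$ points into $\Omega_w$) then upgrades $\Gamma_w$ to a $C^{1,\gamma}$ graph over the $(n-1)$-dimensional hyperplane tangent at $x_0$, proving the theorem.

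The main obstacle is the Hölder comparison of rotations in the second step: the decay in Lemma \ref{lem:negative} is basepoint-dependent, and transferring it to a genuine pointwise Hölder bound on $Q(\cdot)$ requires either a Monneau-type monotonicity formula adapted to the variable coefficient, inhomogeneous setting, or a direct geometric comparison that does not lose the $r^{\delta}$ rate. The low regularity of $a^{ij}$ and $g^j$ adds a secondary difficulty in Step 1, where the basepoint-stability of the Weiss energy decay must absorb the $C^{0,\alpha}$ perturbation errors; this is exactly what the epiperimetric inequality of Theorem \ref{thm:epi}, applied through Corollary \ref{cor:perturb_epi}, is designed to handle.
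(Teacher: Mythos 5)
Your proposal follows essentially the same route as the paper's proof (Propositions \ref{prop:vanishing_order}, \ref{prop:32} and Theorem \ref{thm:freebound}): use the vanishing-order hypothesis to force $3/2$-homogeneous blow-ups in $\mathcal{E}$, propagate the quantitative decay from Lemma \ref{lem:negative}/Corollary \ref{cor:negative} uniformly in a neighborhood of $x_0$, compare rescalings at nearby free boundary points at scale $r\simeq |y-z|$, and read off Hölder continuity of the normal.

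One point in your ``core step'' is imprecise and worth flagging, because it is exactly where the argument could go wrong if carried out naively. You claim the translation between basepoints contributes ``an extra error linear in $r$.'' A direct computation shows it does not: translating a $3/2$-profile $c\Ree(x'\cdot\nu+i x_{n+1})^{3/2}$ by $y-z$ with $|y-z|\simeq r$ and measuring in $\LL(B_r)$ normalized by $r^{3/2}$ produces a contribution of order $\epsilon r^{1/2}/r^{3/2}=\epsilon/r$ where $\epsilon=(y-z)'\cdot\nu$, which is $O(1)$ unless $\epsilon$ is small compared to $|y-z|$. The paper handles this by comparing the two profile approximations on a \emph{common} ball $B_r(0)$ after the affine change of variables $L_{x_0}$, and then decoupling three \emph{orthogonal} contributions: the odd-in-$x_{n+1}$ linear part (yielding $|\p_{n+1}v(x_0)|\lesssim |x_0|^{1/2+\epsilon_0/2}$), the rotation part (odd in the orthogonal direction $\nu_\perp$, yielding $|\nu(x_0,2r)-\nu(0,2r)|\lesssim |x_0|^{\epsilon_0/2}$), and the translation part (even in $\nu_\perp$, yielding an implicit flatness estimate $(x_0)'\cdot\nu\lesssim |x_0|^{1+\epsilon_0/2}$). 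It is this orthogonality---not a ``linear in $r$'' translation bound---that closes the estimate. Your outline gets to the right conclusion, and you correctly identify this comparison as the main obstacle, but the mechanism is the profile decomposition rather than a generic translation bound; the paper's explicit $p_{x_0,2r}$ expression and the use of $c(x_0,2r)\geq c_n>0$ are what make the decoupling quantitative.

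Two smaller notes: the paper's first step also introduces an intermediate normalization $v=w(r_0\cdot)/\|w\|_{\LL(B_{r_0})}$ chosen so that $v$ is uniformly $C^1$-close to $h_{3/2}$, which is what makes the constants in the openness argument uniform (your ``open property with uniform constants''); and the admissible vanishing orders for the (nonlinear) thin obstacle problem are not all of $\{3/2,2,5/2,\dots\}$---$5/2$ for instance does not occur---though this does not affect your argument since all you need is that nothing lies strictly between $3/2$ and $2$.
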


Finally, we also explain how the methods, which we use, apply to the \emph{boundary} thin obstacle problem (or Signorini problem) by a slight variation of our proof. Here the coefficient regularity can be reduced substantially (c.f. Remark \ref{rmk:boundary}).

\subsection{Background}
The study of the regularity properties of the thin obstacle problem has a long tradition,
dating back to the works of Lewy \cite{Le72} and Richardson \cite{Ri78}, who established the optimal
regularity result for the two-dimensional problem by means of complex
analysis techniques. Subsequently, the full higher dimensional problem was studied in
different generalities by various authors, including the articles of Caffarelli \cite{Ca79},
Kinderlehrer \cite{Ki81} and Uraltseva \cite{Ura87}. Of these, the work of Uraltseva implied the most
general results: Working with $W^{1,p}$ coefficients and $L^p$ inhomogeneities with $p>n+1$, she showed the $C^{1,\alpha}$ regularity of solutions to the thin obstacle
problem, however with an undetermined value $\alpha\in(0,1)$. In their seminal articles Athansopoulos and Caffarelli \cite{AC06} and Athansopoulos,
Caffarelli and Salsa \cite{ACS08} for the first time identified and proved the optimal $C^{1,1/2}$
regularity in the \emph{isotropic, constant coefficient} situation \emph{without inhomoneity}.
Here the authors introduced Almgren's frequency function as a crucial new tool yielding
sufficient compactness to carry out a blow-up analysis of the problem. \\
While the \emph{constant} coefficient (and the \emph{high} regularity) situation was studied thoroughly
subsequently \cite{G09}, \cite{CSS}, \cite{GP09}, \cite{FS16} (c.f. also \cite{PSU} for further
references), the \emph{variable} coefficient situation with \emph{low coefficient regularity}
was only addressed more recently \cite{GSVG14}, \cite{GPSVG15}, \cite{KRS14}, \cite{KRSI}. In this context all the cited articles either relied on a very careful analysis of Almgren's frequency
formula or Carleman estimates as a replacement of this. As these tools are however
limited to the setting of $W^{1,p}$ with $p=\infty$ and $p>n+1$ coefficient regularity,
respectively, the situation of only H\"older continuous coefficients had been out of their
scope.\\
A robust, new line of thought was however introduced by
Andersson \cite{An}, \cite{An16} in the context of the
Lam\'e system, where he implemented a linearization technique. Our strategy of proof
combines his ideas with energetic arguments -- in the form of the Weiss energy
and the epiperimetric inequality -- to derive optimal regularity results for the scalar thin
obstacle problem with only H\"older continuous coefficients.

\subsection{Outline}
The remainder of the article is organized as follows: In Section \ref{sec:almost_opt} we study
the almost optimal regularity properties of solutions to the variable coefficient thin obstacle
problem. Here we argue in two steps, first establishing the almost Lipschitz regularity
(c.f. Lemma \ref{lem:almost_linear}) as the main conclusion of Section
\ref{subsec:almost_linear} and then proceed in Section \ref{subsec:almost_opt} to the
full \emph{almost optimal regularity} result (c.f. Proposition \ref{prop:Hoelder}).
With this preparation, in Section \ref{sec:opt_reg}, which is the central part of our argument, we investigate the \emph{optimal regularity} of solutions
(c.f. Theorem \ref{thm:Hoelder_opt}). Again this discussion is split into two parts: We
first recall and discuss the Weiss energy and its relation to the epiperimetric inequality
(c.f. Theorem \ref{thm:epi}). In particular, we show that even for our variable coefficient
situation
it is possible to derive a decay rate for the Weiss energy by
relying on the epiperimetric inequality from \cite{FS16} (c.f. Corollary \ref{cor:negative}).
With this at hand, we then combine Andersson's compactness method with the quantitative
decay of the Weiss energy to deduce a decay rate of solutions to the variable coefficient
thin obstacle problem towards the $3/2$-homogeneous profile (c.f. Lemma \ref{lem:negative}).
This then allows us to derive the desired optimal regularity result in Theorem
\ref{thm:Hoelder_opt}. In Section \ref{sec:freebdr} we then further exploit the decay estimate
from Lemma \ref{lem:negative} to infer the regularity of the regular free boundary
(c.f. Theorem \ref{thm:freebound}). Finally, we conclude the article by providing the proof
of Lemma \ref{lem:class} in the Appendix, Section \ref{sec:app}.

\section{Almost Optimal Regularity}\label{sec:almost_opt}

In this section we prove the \emph{almost} optimal regularity of solutions to the thin obstacle problem (c.f. Proposition \ref{prop:Hoelder}). The argument for this relies on the compactness method of proving regularity, as introduced by Andersson \cite{An16}, and
the precise understanding of global solutions to the constant coefficient thin obstacle problem. In order to implement this strategy, in Section \ref{subsec:almost_linear} we first establish some sufficient (i.e. sublinear) initial regularity, for which we rely on energy estimates as in \cite{AU} (c.f. Lemma \ref{lem:AU}) in combination with compactness ideas as in Andersson (c.f. Lemma \ref{lem:almost_linear}). In the second part of this section (c.f. Section \ref{subsec:almost_opt}) we then carry out Andersson's compactness argument (c.f. Lemma \ref{lem:growth_a}). \\

\subsection{Almost linear regularity}\label{subsec:almost_linear}

In this section we establish some initial regularity for solutions to \eqref{eq:vari_0}. These results, which rely on ideas of Arkhipova and Uraltseva \cite{AU} and Andersson \cite{An16}, allow us to rely
on pointwise estimates for solutions to the variable coefficient thin obstacle problem in the later sections. As a major tool we also prove a Liouville type result (c.f. Lemma \ref{lem:global}). In the following sections it will play a central role.\\

We begin by recalling the $C^{0,\beta}$
regularity estimates due to Arkhipova and Uraltseva \cite{AU}:

\begin{lem}
\label{lem:AU}
 Let $w$ be a solution to the variable coefficient thin obstacle problem \eqref{eq:vari_0} in $B_{1}$ with $a^{ij},g^i\in C^{0,\alpha}(B_1)$ for some $\alpha\in (0,1)$. Further assume that the condition (N) is satisfied.
 Then, for some $\beta \in (0,1)$ we have that $w\in C^{0,\beta}(B_{1/2})$ and
 \begin{align*}
  \|w\|_{C^{0,\beta}(B_{1/2})}
  \leq C\|w\|_{L^2(B_1)},\quad C=C(\|a^{ij}\|_{C^{0,\alpha}}, \|g^{i}\|_{C^{0,\alpha}}, n,\alpha).
 \end{align*}
\end{lem}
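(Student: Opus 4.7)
The plan is to adapt the classical De Giorgi--Nash--Moser machinery to the one-sided obstacle setting, closely following Arkhipova--Uraltseva \cite{AU}. The proof has two stages: a local $L^\infty$ bound via Moser iteration, and then an oscillation decay producing Hölder continuity.

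The starting point is a pair of Caccioppoli inequalities derived by inserting truncations of $w$ into (\ref{eq:vari_0}). For a cut-off $\eta \in C_c^\infty(B_1)$ and for $k \geq 0$, the test function $v = w - \eta^2(w-k)_+$ is admissible, since on $B_1'$ one has $v = w(1-\eta^2) + k\eta^2 \geq 0$ on $\{w>k\}$ and $v = w \geq 0$ on $\{w\leq k\}$. For any $k \in \R$, the test function $v = w + \eta^2(w-k)_-$ is admissible, since $(w-k)_- \geq 0$ forces $v \geq w \geq 0$ on $B_1'$. Expanding the derivatives, applying uniform ellipticity of $a^{ij}$ and Young's inequality, one obtains, for each sign,
\begin{equation*}
\int_{B_1} |\nabla (w-k)_\pm|^2 \eta^2 \, dx \leq C \int_{B_1} (w-k)_\pm^2 \, |\nabla \eta|^2 \, dx + C \int_{B_1 \cap \{(w-k)_\pm>0\}} \eta^2 \, dx,
\end{equation*}
with $C$ depending on $n$, the ellipticity constants and $\|g^i\|_{L^\infty}$. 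These place $w$ in a De Giorgi-type class that is unconditional for lower truncations and restricted to $k \geq 0$ for upper truncations.

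A standard Moser iteration then gives $\|w\|_{L^\infty(B_{3/4})} \leq C\|w\|_{L^2(B_1)}$, with the upper and lower bounds obtained separately from the respective truncation families. For the oscillation decay I would work on dyadic balls $B_r \subset B_{3/4}$. When $B_r \cap B_1' = \emptyset$, $w$ solves the full divergence-form equation with $C^{0,\alpha}$ data in $B_r$, and classical De Giorgi--Nash--Moser yields the Hölder estimate. When $B_r$ meets $B_1'$, the obstacle condition forces $M := \sup_{B_r} w \geq 0$, and De Giorgi's alternative applied to the non-negative level $k := \max\{0,(M+m)/2\}$ (with $m := \inf_{B_r} w$) produces the dichotomy that either $\inf_{B_{r/2}} w \geq m + \delta(M-m)$ or $\sup_{B_{r/2}} w \leq M - \delta(M-m)$, via the corresponding truncation Caccioppoli and the standard iteration. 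Iterating the resulting decay
\begin{equation*}
\mathrm{osc}_{B_{r/2}}\, w \leq (1-\theta)\, \mathrm{osc}_{B_r}\, w + C r^\alpha
\end{equation*}
at dyadic scales via a Campanato-type argument yields $w \in C^{0,\beta}(B_{1/2})$ for some $\beta \in (0,1)$ together with the claimed quantitative bound.

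The main technical obstacle is the asymmetry of the obstacle condition, which restricts the upper-truncation Caccioppoli to non-negative levels and so prevents a naive use of De Giorgi's alternative at the median $(M+m)/2$. Choosing the truncation level as $\max\{0,(M+m)/2\}$ absorbs this asymmetry: the obstacle forces $M \geq 0$ on any ball meeting $B_1'$, so this level is always non-negative and the upper branch remains admissible, while the lower truncation is unconditional and covers the complementary alternative. The normalization (N), in particular $a^{i,n+1}(x',0)=0$, plays no direct role in this lemma, but it enables an alternative proof via the even reflection across $\{x_{n+1}=0\}$, under which the extended equation has $C^{0,\alpha}$ coefficients and standard interior De Giorgi--Nash--Moser applies directly.
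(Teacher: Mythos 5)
Your approach is genuinely different from the paper's. The paper does not work directly with the variational inequality at all: it introduces a penalized problem \eqref{eq:penalized}, proves existence of $w^\epsilon$ by monotone-operator theory, derives $\epsilon$-uniform $L^\infty$ and $C^{0,\beta}$ bounds for the penalized solutions by invoking the restricted De Giorgi classes of Ladyzhenskaya--Ural'tseva (Theorems 5.2 and 6.1 in \cite{LU}), and then passes to the limit $\epsilon\to 0$ to recover $w$. You instead insert obstacle-compatible truncations directly into \eqref{eq:vari_0}. Both are ``Arkhipova--Ural'tseva in spirit,'' but the mechanisms are distinct; the penalization route lets the paper cite off-the-shelf results for Neumann-type problems with a one-signed penalty, while your route has to reconstruct the one-sided De Giorgi machinery by hand.

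There is a genuine gap in the oscillation-decay step of your reconstruction. You propose applying De Giorgi's alternative at the level $k_0:=\max\{0,(M+m)/2\}$ and claim it yields either $\inf_{B_{r/2}}w\geq m+\delta(M-m)$ or $\sup_{B_{r/2}}w\leq M-\delta(M-m)$ with a universal $\delta$. This fails when $(M+m)/2<0$, i.e.\ when $m<-M$, which is precisely the regime where the missing upper Caccioppoli at negative levels would be needed. In that case $k_0=0$, and the two branches of the alternative at level $0$ give $\inf_{B_{r/2}}w\geq m+\delta|m|$ (fine, since $|m|\geq(M-m)/2$) or $\sup_{B_{r/2}}w\leq(1-\delta)M+Cr^\alpha$. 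The second bound decreases $\sup w$ by only $\delta M$, not by a fixed fraction of $M-m$; since $M$ can be arbitrarily small relative to $M-m$ (e.g.\ $w=-|x_{n+1}|$, where $M=0$ and $m=-r$), the recursion can stall with $\sup$ pinned near $0$ and $\inf$ never improved by the dichotomy. Closing this requires an additional structural ingredient that is not ``the standard iteration'': either (i) exploit that $w$ is a global supersolution of $\p_i a^{ij}\p_j u=-\p_i g^i$ in the full ball together with the trace constraint $w\geq 0$ on $B_1'$, for instance through a barrier/comparison with the mixed Dirichlet--Neumann solution on $B_r\setminus B_r'$, or through a hole-filling argument for $w_-$ combined with the Poincar\'e inequality with vanishing trace on $B_r'$; or (ii) penalize as the paper does. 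As written, the ``produces the dichotomy'' sentence asserts more than the machinery you set up can deliver.

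Two smaller points. First, on the $L^\infty$ bound, the constant in your Caccioppoli carries $\|g^i\|_{L^\infty}$, which is fine, but note the paper's Moser step also needs the $W^{1,2}$ energy bound to feed into Theorem 5.2 in \cite{LU}; you should state where the $L^2$-dependence of the final constant enters. Second, the closing remark that (N) ``enables an alternative proof via even reflection under which standard interior De Giorgi--Nash--Moser applies directly'' is not correct for the \emph{interior} thin obstacle problem. For the even extension $\hat w(x',x_{n+1}):=w(x',|x_{n+1}|)$ the conormal jump across $B_1'$ becomes $-2\,a^{n+1,n+1}\p_{n+1}w\big|_{x_{n+1}\to 0^+}$, which has no sign in general and certainly need not vanish on $\Omega_w$; so $\hat w$ is not a weak solution of a divergence-form equation in $B_1$ and DGNM does not ``apply directly.'' The even-reflection shortcut is only available for the \emph{boundary} (Signorini) version, where $w$ is $x_{n+1}$-even from the start; the paper makes exactly this distinction in Remark \ref{rmk:boundary}.
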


\begin{proof}
We only sketch the proof, as it can be found in the articles of Arkhipova and Uraltseva,
in particular in \cite{AU}. We consider the following penalized problem
\begin{equation}
\label{eq:penalized}
 \begin{split}
  \p_i a^{ij} \p_ j w^{\epsilon}& =-\p_i g^i \mbox{ in } B_1,\\
  \p_{\nu_+}w^{\epsilon}+\p_{\nu_-}w^{\epsilon}
  &= -\beta_{\epsilon}(w^{\epsilon}) \mbox{ on } B_1',\\
  w^{\epsilon} &= 0 \mbox{ on } \partial B_1,
 \end{split}
 \end{equation}
 where $\epsilon>0$ is a small parameter, $\beta_{\epsilon}:\R \rightarrow (-\infty,0)$, $\beta_{\epsilon}'\geq 0$,
 $\beta_{\epsilon}(t)=0$ for $t\geq 0$ and $\beta_{\epsilon}(t)=\epsilon + t/\epsilon$ for
 $t \leq -2\epsilon^2$. We work with (\ref{eq:penalized}) in its weak formulation
\begin{align}\label{eq:energy_equ}
 \int\limits_{B_1} a^{ij} \p_i w^{\epsilon} \p_j \eta dx =
 - \int\limits_{B_1'} \beta_{\epsilon}(w^{\epsilon}) \eta dx'
 - \int\limits_{B_1} g^i \p_i \eta dx,\quad  \forall \eta\in C^\infty_0(B_1).
\end{align}
It is classical that an energy solution to this problem exists.
Indeed, since $\beta'_\epsilon\geq 0$, the operator
\begin{align*}
A_\epsilon&: W^{1,2}_0(B_1)\rightarrow W^{-1,2}(B_1),\\
\langle A_\epsilon u,\eta\rangle &=\int_{B_1}a^{ij}\p_iu\p_j\eta dx
+\int_{B'_1}\beta_\epsilon(u)\eta dx',
\end{align*}
is monotone.
Moreover, it is not hard to check that it is bounded and coercive,
i.e. $\langle A_\epsilon u, u\rangle \geq \lambda \|u\|_{W^{1,2}_0(B_1)}$.
Thus existence follows from the Brouwer fixed point theorem (c.f. \cite{L76}).
\\

With this at hand, we next derive the uniform (in $\epsilon$) regularity of the solution $w^\epsilon$.
First using the weak formulation,
it is immediate to deduce the uniform boundedness of $w^{\epsilon}$ in $W^{1,2}(B_1)$ (the
weak formulation of the equation gives the homogeneous $W^{1,2}$ bound; the boundary data
and an application of Poincar\'e's inequality the corresponding $L^2$ estimate).
Next we consider the test functions
$\eta_{\pm}:=(w^{\epsilon} \mp k)_+ \zeta^2$, where $k\in \R_+$ and $\zeta$
denotes a cut-off function supported in $B_{\rho}(0)$. This yields that for
$A_{k,\rho}:= \{ x\in B_{\rho}: w^{\epsilon}(x) \geq k  \}$ and $\rho \in (0,1/2)$
\begin{align*}
 \int\limits_{A_{k,\rho}}|\nabla w^{\epsilon}|^2 \zeta^2 dx
 \leq C \int\limits_{A_{k,\rho}}(w^{\epsilon}\mp k)_+^2 |\nabla \zeta|^2 dx
 + C \|g^i\|_{L^{\infty}}|A_{k,\rho}|.
\end{align*}
Invoking Theorem 5.2 in \cite{LU} implies that
$w^\epsilon\in L^\infty(B_1)$ with $\|w^\epsilon\|_{L^\infty(B_1)}\leq C$,
where $C=C(n,\lambda, \Lambda, \|g^i\|_{L^\infty(B_1)}, \|w^\epsilon\|_{L^2(B_1)})$.
Then an application of Theorem 6.1 in \cite{LU} results in
$w^{\epsilon}\in C^{0,\beta}(B_{1/2})$ for some $\beta\in (0,1)$ with a norm,
which only depends on $\|w^{\epsilon}\|_{L^2(B_1)}$. As
$\|w^{\epsilon}\|_{L^2(B_1)} \leq C \|g^i\|_{L^2(B_1)}$, this is
in particular independent of the parameter $\epsilon$.\\

Finally, we prove the convergence of $w^\epsilon$ to $w$. This relies on two ingredients:
\begin{itemize}
\item[(i)] By the previous bounds, we infer that there exists a function $\hat{w}\in W^{1,2}(B_1)$ such that
$w^{\epsilon}\rightarrow \hat{w}$ weakly in $W^{1,2}(B_1)$, and strongly both in $L^2(B_1)$ and $L^2(\partial B_1)$.
We claim that $\hat{w}\geq 0$ for $\mathcal{H}^{n}$ a.e. $x\in B_1'$.
\item[(ii)] The function $\hat{w}$ satisfies the variational inequality \eqref{eq:vari_0}.
\end{itemize}
To obtain (i), we insert the function $\eta= w^{\epsilon}\zeta^2$, with $\zeta \in C^{\infty}_0(B_1)$ being a cut-off function, into \eqref{eq:energy_equ}. This implies that
\begin{align*}
\int\limits_{B_1'} \beta_{\epsilon}(w^{\epsilon}) w^{\epsilon} \zeta^2 dx \leq C \left(\|w^{\epsilon}\|_{W^{1,2}(B_1)}^2 + \|g\|_{L^2(B_1)}^2 \right)
\leq C \|g\|_{L^2(B_1)}^2.
\end{align*}
Hence, for all $\delta>0$ the definition of $\beta_{\epsilon}$ entails that
\begin{align*}
\mathcal{H}^{n}\left( \{w^{\epsilon}(\cdot,0)<-\delta\}\cap \{\zeta=1\}\right)
\delta (\delta- \epsilon^2) \leq C \epsilon,
\end{align*}
which yields the claim of (i).\\
To deduce (ii), we use the test function $\eta= w^{\epsilon}-v$ in \eqref{eq:energy_equ} for an arbitrary function $v\in W^{1,2}(B_1)$, which satisfies $v= w^{\epsilon}=0$ on $\partial B_1$ and $v\geq 0$ on $B'_1$. Invoking the monotonicity of $\beta_{\epsilon}$ and using that $v\geq 0$ on $B'_1$, we obtain $\beta_\epsilon(w^\epsilon)(w^\epsilon-v)=(\beta_\epsilon(w^\epsilon)-\beta_\epsilon(v))(w^\epsilon-v)\geq 0$. This leads to the inequality
\begin{align*}
\int\limits_{B_1} a^{ij} \p_i w^{\epsilon} \p_j (w^{\epsilon}-v) dx + \int\limits_{B_1} g^i \p_i(w^{\epsilon}-v) dx \leq 0.
\end{align*}
Passing to the limit and using weak lower semicontinuity thus results in
\begin{align*}
\int\limits_{B_1} a^{ij} \p_i \hat{w} \p_j (\hat{w}-v) dx + \int\limits_{B_1} g^i \p_i(\hat{w}-v) dx \leq 0.
\end{align*}
By virtue of the choice of $v$ and the boundary data of $\hat{w}$, this however implies that $\hat{w}$ is a solution to the same variational inequality as $w$. By uniqueness this therefore yields the identity $\hat{w}=w$.
\end{proof}

We next seek to upgrade the $C^{0,\beta}$ estimates for $w$ to its almost Lipschitz regularity. To this end, we rely on a Liouville theorem (c.f. Lemma \ref{lem:global}), which will also play a crucial role in the later sections. In order to prove the Liouville theorem, we rely on a classification result for solutions to the following linear, mixed Dirichlet-Neumann boundary value problem:

\begin{lem}[Linear classification result]
\label{lem:class}
Let $v\in W^{1,2}(B_1)$ be a solution of the following linear mixed boundary value problem:
\begin{equation}
\label{eq:problem}
\begin{split}
 \D v &= 0 \mbox{ in } B_1^\pm,\\
 \p_{n+1}^+ v + \p_{n+1}^- v &= 0 \mbox{ on } B_1'\cap\{x_n > 0\},\\
 v &= 0 \mbox{ on } B_1'\cap \{ x_n \leq 0\}.
 \end{split}
\end{equation}
Then,
\begin{align*}
v(x)= \sum\limits_{j=1}^{\infty} a_j q_j(x),
\end{align*}
where the functions $q_j$ are homogeneous functions of the order $j/2$.\\
We further remark that the functions $q_j$ are orthogonal
with respect to the $L^2(B_1)$, the $L^2(\partial B_1)$ and the $W^{1,2}(B_1)$ inner products.\\
Moreover, if
$v\in W^{2,2}(B_1^\pm)$, we have that $a_1 = 0$.
\end{lem}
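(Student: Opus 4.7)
The plan is to reduce the classification to a Dirichlet problem on a slit ball and then perform a spherical eigenfunction expansion.

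First I would argue that the transmission condition on $B_1' \cap \{x_n > 0\}$ expresses the equality of the conormal $x_{n+1}$-derivatives of $v$ from both sides in the isotropic setting; combined with the continuity of the trace of $v\in W^{1,2}(B_1)$ across this set, a standard transmission/removable-singularity argument for harmonic functions then shows that $v$ extends harmonically across $B_1'\cap\{x_n>0\}$. Setting $\Sigma := \{x\in \overline{B_1}: x_n\le 0,\ x_{n+1}=0\}$, the problem becomes a classical Dirichlet problem on the slit ball,
\[ \Delta v = 0 \text{ in } B_1\setminus\Sigma,\qquad v=0 \text{ on } \Sigma. \]

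Next, I would expand $v$ in spherical coordinates $x = r\theta$ using the eigenfunctions of the Dirichlet Laplace-Beltrami operator $-\Delta_{S^n}$ on $S^n\setminus\Sigma$ (with Dirichlet boundary condition on the codimension-one set $\Sigma\cap S^n$). This operator is self-adjoint with compact resolvent, hence has a discrete spectrum $\{\mu_l\}_{l\ge 1}$ and an $L^2(S^n)$-orthonormal eigenbasis $\{\phi_l\}_{l\ge 1}$. Setting $\kappa_l(\kappa_l+n-1) = \mu_l$, expanding $v$ on each sphere $\partial B_r$ in this basis yields a $W^{1,2}$-convergent series $v = \sum_{l\ge 1} a_l q_l$ with $q_l(x) = r^{\kappa_l}\phi_l(\theta)$. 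The exponents $\kappa_l$ are identified by a further separation of variables adapted to the crack: writing $x=(y,z)$ with $y=(x_1,\dots,x_{n-1})$, $z=(x_n,x_{n+1})$, and using polar coordinates $z=\rho(\cos\vartheta,\sin\vartheta)$ with $\vartheta\in(-\pi,\pi)$, the slit becomes $\{\vartheta=\pm\pi\}$. The one-dimensional Dirichlet eigenvalue problem on $(-\pi,\pi)$ has eigenfunctions $\sin(j(\vartheta+\pi)/2)$ for $j\ge 1$ with eigenvalues $(j/2)^2$, and coupling with harmonic polynomials in $y$ yields the set of admissible total homogeneities $\{j/2+k: j\ge 1,\ k\ge 0\} = \{l/2 : l\ge 1\}$. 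Hence each $q_l$ is homogeneous of order $l/2$.

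The orthogonality of the $q_l$'s in the $L^2(B_1)$, $L^2(\partial B_1)$ and $W^{1,2}(B_1)$ inner products follows directly, as each inner product factors as a radial integral times an $L^2(S^n)$ pairing of the angular parts $\phi_l$, and the $\phi_l$'s are $L^2(S^n)$-orthogonal for distinct eigenvalues of the self-adjoint spherical operator. For the $W^{2,2}$ statement: the first mode $q_1$ is (up to a scalar) $\rho^{1/2}\cos(\vartheta/2)$, independent of $y$, with second derivatives of size $\rho^{-3/2}$ near the origin. The two-dimensional polar integral $\int\rho^{-3}\,dz$ blows up as $\rho^{-1}$ at the origin, so $\int_{B_1^\pm}|D^2q_1|^2\,dx = +\infty$ regardless of the ambient dimension; hence $q_1\notin W^{2,2}(B_1^\pm)$, and the hypothesis $v\in W^{2,2}(B_1^\pm)$ forces $a_1=0$.

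The main technical point is the rigorous justification of the $W^{1,2}$-convergence of the spherical expansion together with the explicit identification of the spectrum; both rely on the standard spectral theory of the Dirichlet Laplace-Beltrami operator on the open manifold $S^n\setminus\Sigma$ (compactness of the resolvent via the Rellich-Kondrachov theorem applied to the associated bilinear form), together with the elementary separation of variables in cylindrical coordinates that computes the spectrum.
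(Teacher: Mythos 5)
Your proof takes a genuinely different route from the paper's. Where the paper keeps the mixed Dirichlet/transmission formulation, performs the spherical eigenfunction expansion directly, and then identifies the admissible homogeneities by repeated tangential differentiation (so that a sufficiently high tangential derivative has negative homogeneity, is bounded by elliptic regularity, and hence vanishes, reducing the classification to the two-dimensional problem in $(x_n,x_{n+1})$), you first observe that the transmission condition $\partial_{n+1}^+v+\partial_{n+1}^-v=0$ on $\{x_n>0\}$---written with the opposed outward conormals---together with the matching $W^{1,2}$ traces, means $v$ is harmonic across $B_1'\cap\{x_n>0\}$. This reduces the problem to a pure Dirichlet problem on the slit ball $B_1\setminus\Sigma$, which is a cleaner geometry and makes the orthogonality statements and the $W^{2,2}$ criterion (the $\rho^{1/2}\cos(\vartheta/2)$ mode has $|D^2q_1|\sim\rho^{-3/2}$, so $\int |D^2q_1|^2\,\rho\,d\rho\,d\vartheta$ diverges near the edge) transparent. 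Both routes are valid; yours buys more explicit formulas and a conceptually simpler model problem, while the paper's tangential-differentiation argument avoids explicit separation of variables and is softer.

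The one place where your argument is too loose is the spectral identification. You assert that ``coupling with harmonic polynomials in $y$'' yields the full set of admissible homogeneities $\{j/2+k:j\geq 1,\,k\geq0\}$. The products $p_k(y)\,\rho^{j/2}\sin(j(\vartheta+\pi)/2)$ are indeed homogeneous solutions of the slit problem of degree $j/2+k$, but they do not span the angular eigenspaces (already for the full sphere, homogeneous harmonics on $\R^{n+1}$ are not all products of harmonics on $\R^{n-1}$ and on $\R^2$), so this alone does not exclude other homogeneities. To make the claim rigorous: Fourier-expand $v=\sum_j v_j(y,\rho)\sin(j(\vartheta+\pi)/2)$, note that each $v_j$ solves $\Delta_y v_j+\rho^{-1}\partial_\rho(\rho\partial_\rho v_j)-(j/2)^2\rho^{-2}v_j=0$, and substitute $v_j=\rho^{j/2}w_j$ to find that $w_j$ is an axisymmetric harmonic on $\R^{n-1}\times\R^{j+2}$; a homogeneous such $w_j$ that is locally $L^2$ is a polynomial, so its degree is a nonnegative integer and $\kappa=j/2+k$ as claimed. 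With this correction the argument is complete; it reaches the same classification as the paper's Step 1--Step 2 by an explicit computation rather than by the tangential differentiation device.
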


The proof of this lemma follows from an analogous result of Andersson's (c.f. \cite{An16}, Lemma 6.2) and is postponed to the
Appendix, c.f. Section \ref{sec:app}. \\
Relying on the previous classification result, we formulate and prove the following central Liouville type
theorem for the thin obstacle problem. The key ingredient here is the Friedland-Hayman inequality (c.f. \cite{FH76}): It allows us to show that global solutions with subquadratic growth at infinity are two-dimensional. Analogous results for the \emph{boundary} thin obstacle problem are known (c.f. Lemma 5.1 in \cite{An16} and also Proposition 9.9 in \cite{PSU}). The proof for the interior thin obstacle problem uses the same idea, but with the modification that we do not have even symmetry about $x_{n+1}$ at hand.

\begin{lem}[Liouville]
\label{lem:global}
Let $w\in W^{1,2}_{loc}(\R^{n+1})$ be a global solution to the
thin obstacle problem for $a^{ij}= \delta^{ij}$, i.e. suppose that for any $R\geq 1$,
\begin{equation}\label{eq:limit_equ}
\begin{split}
&\int_{B_R}\nabla w\cdot \nabla (v-w) dx\geq 0,\\
& \quad \mbox{ for all } v\in \{v\in W^{1,2}(B_R):v\geq 0 \text{ on } B'_R, \ v=w\text{ on } \p B_R\}.
\end{split}
\end{equation}
\begin{itemize}
\item[(i)] Assume that $\sup\limits_{B_R}|w|\leq C R^\beta$ for $R\rightarrow \infty$ and some $\beta \in (0,1)$. Then $w(x)=c$ for some constant $c\geq 0$.
\item[(ii)] Assume that $\sup\limits_{B_R}|w|\leq CR^{\beta}$ for $R\rightarrow \infty$ and some $\beta \in (0,3/2)$.
Then, $w(x)= a |x_{n+1}| + b x_{n+1}$ or $ w(x)= c$ for some constants $a,b,c\in \R$ with $a\leq 0$ and $c\geq 0$.
\item[(iii)] Assume that $\sup\limits_{B_R}|w|\leq CR^{\beta}$ for $R\rightarrow \infty$ and some $\beta \in (0,2)$, and that $0\in \Gamma_w$.
Then, $w(x)= b x_{n+1} + d \Ree(Q x' + i |x_{n+1}|)^{3/2}$ for some constants $b, d\in \R$ and a rotation $Q$.
\end{itemize}
In particular, in both cases (i) and (ii) we have $\Gamma_w =\emptyset$.
\end{lem}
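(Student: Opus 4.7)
My plan is to decompose $w = w_e + w_o$ into its even and odd parts with respect to $x_{n+1}$ and reduce the interior thin obstacle problem to the (one-sided) boundary Signorini problem, for which the analogous Liouville classifications are already available (Lemma 5.1 of \cite{An16}; Proposition 9.9 of \cite{PSU}). These boundary results ultimately rest on Almgren's frequency formula combined with the Friedland-Hayman inequality, as hinted at in the statement. The only extra work in the interior setting is the decomposition and the reassembly.

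For the odd part, I first observe that in each open half-space both $w(x', x_{n+1})$ and its reflection $w(x', -x_{n+1})$ are harmonic by \eqref{eq:off_fb}, so
\[
w_o(x', x_{n+1}) := \tfrac{1}{2}\bigl( w(x', x_{n+1}) - w(x', -x_{n+1}) \bigr)
\]
is harmonic in $\R^{n+1}_+ \cup \R^{n+1}_-$ and, by construction, vanishes on $\{x_{n+1} = 0\}$. Since $w$ is continuous (by Evans' lemma or by Lemma \ref{lem:AU}), Schwarz odd reflection promotes $w_o$ to a globally harmonic function on $\R^{n+1}$, which inherits the polynomial growth $|w_o| \leq C R^{\beta}$. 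The classical Liouville theorem for harmonic functions then forces $w_o$ to be a harmonic polynomial of degree at most the integer part of $\beta$; combined with the odd symmetry in $x_{n+1}$ this yields $w_o \equiv 0$ in case (i) and $w_o = b\, x_{n+1}$ in cases (ii) and (iii).

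For the even part $w_e(x', x_{n+1}) := \tfrac{1}{2}(w(x', x_{n+1}) + w(x', -x_{n+1}))$, I would next verify that its restriction to $\overline{\R^{n+1}_{+}}$ solves the global boundary Signorini problem: harmonic in the upper half space, $w_e \geq 0$ on $\{x_{n+1}=0\}$, and $\partial_{n+1}^+ w_e \leq 0$ in the distributional sense with support contained in $\{w_e = 0\}$, hence the complementarity $w_e \cdot \partial_{n+1}^+ w_e = 0$ on the plane. The key point here is that under the even/odd decomposition the jump \eqref{eq:sig} splits symmetrically: on the plane, $\partial_{\nu^+} w + \partial_{\nu^-} w$ reduces to $-2\,\partial_{n+1}^+ w_e$, and its support remains in $\{w = 0\} = \{w_e = 0\}$ (noting that $w \geq 0$ on the plane). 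Quoting the boundary Signorini Liouville of \cite{An16, PSU}, $w_e$ is then a non-negative constant in the sub-linear regime; is either a non-negative constant or $a\, x_{n+1}$ with $a \leq 0$ in the sub-$3/2$ regime; and equals $d\, \Ree(Q x' + i x_{n+1})^{3/2}$ for some rotation $Q \in SO(n)$ and $d \in \R$ in the sub-quadratic regime with $0 \in \Gamma_{w_e}$. Extending $w_e$ evenly across $\{x_{n+1} = 0\}$ -- which amounts to replacing $x_{n+1}$ by $|x_{n+1}|$ in the above formulas -- and adding $w_o$ produces the asserted forms in (i), (ii), (iii); the final statement $\Gamma_w = \emptyset$ in cases (i) and (ii) is immediate from direct inspection of the classified solutions.

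The main obstacle I anticipate is the precise justification of the reduction: verifying the correct sign of the conormal derivative $\partial_{n+1}^+ w_e$ on the plane together with the support condition (so that the boundary results of \cite{An16, PSU} apply as black boxes), and the global harmonicity of $w_o$ via Schwarz reflection at the available $W^{1,2}_{\mathrm{loc}}$ regularity of $w$. Both reduce to careful bookkeeping using the continuity of $w$ and the symmetric form of the support condition \eqref{eq:sig}; once these are in place, the cited boundary classifications close the argument.
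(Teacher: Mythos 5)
Your proposal is correct, and it takes a genuinely different route from the paper. You recover the even symmetry in $x_{n+1}$ by decomposing $w = w_e + w_o$: the odd part $w_o$ vanishes on $\{x_{n+1}=0\}$, so Schwarz reflection (using the continuity you correctly invoke) makes it an entire harmonic function, hence a polynomial, hence $0$ or $b\,x_{n+1}$ under the growth bounds; the even part $w_e$ restricted to $\R^{n+1}_+$ is a global solution of the \emph{boundary} Signorini problem, to which the Liouville theorems of \cite{An16}, \cite{PSU} apply as black boxes. Your sign bookkeeping is right: since $\nu_+ = -e_{n+1}$ and $\nu_- = e_{n+1}$, and $\p_{n+1}w_e$ is odd while $\p_{n+1}w_o$ is even, one indeed gets $\p_{\nu^+}w+\p_{\nu^-}w = -2\,\p_{n+1}^+w_e$, and the constraint and complementarity depend only on $w_e$. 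The variational inequality also decouples across the decomposition because the ball is symmetric, so the cross terms vanish and one gets a genuine Signorini variational inequality for $w_e$ and harmonicity of $w_o$. The paper, by contrast, explicitly chooses \emph{not} to symmetrize (``we do not have even symmetry about $x_{n+1}$ at hand'') and instead reproves the boundary argument in the two-sided setting: it applies Friedland--Hayman directly to the one-sided tangential derivatives $(\p_i w)^\pm$, $i\le n$, to get two-dimensionality, then runs a three-way case analysis on $\Omega_w$, $\Lambda_w$, $\Gamma_w$ and invokes Lemma \ref{lem:class}. Your route is shorter and more modular but imports the cited boundary results wholesale; the paper's is self-contained modulo Lemma \ref{lem:class}. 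A minor point in your favor: when you reassemble in case (ii) you actually obtain $w = c + b\,x_{n+1}$ (with $c>0$, $b\neq 0$) as an admissible solution, which the lemma's stated conclusion omits; the paper's own proof, in the affine case $\Omega_w = \R^n\times\{0\}$, reaches the same conclusion you do, so this is a small imprecision in the statement of the lemma rather than a discrepancy in either argument.
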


We remark that as formulated the Liouville theorem is much stronger than needed for our first application in Lemma \ref{lem:almost_linear}. We will however use it in many further instances in the sequel and have thus stated the result in its full strength here.

\begin{proof}
By the regularity properties of solutions to the thin obstacle problem with constant coefficients,
we have that
$w\in W^{2,2}_{loc}(\R^{n+1}_\pm)$ and $$\sup _{B_R}|\nabla w|
\leq \frac{C}{R}\sup_{B_{2R}}|w|\leq CR^{\beta-1},\quad R>1.$$
Observe that $(\p_iw)^+$ and $(\p_iw)^-$ with $i\in \{1,\cdots,n\}$ are subharmonic
with sublinear growth at infinity (by the assumption that $\beta<2$). Thus, by the Friedland-Hayman inequality \cite{FH76},
the tangential derivatives $\p_iw$ have a sign, i.e. for each $i\in\{1,\cdots,n\}$ either
$$\p_iw\geq 0 \mbox{ or } \p_iw\leq 0$$
in the whole space $\R^{n+1}$. This implies that $w$ is two-dimensional. \\
We study the corresponding global problem in two dimensions. Without loss of generality we assume that $w(x)=w(x_1,x_{n+1})$.
We distinguish three cases: $\Omega = \R^n \times \{0\}$, $\Lambda = \R^n \times \{0\}$
and $\Gamma_w \neq \emptyset$:
\begin{itemize}
\item[(i)] We assume that $\Omega = \R^n \times \{0\}$. In this case $w$ is a weak solution to the Laplace equation in the whole space. By interior regularity, this implies that it is a classical solution in the whole space. Hence, invoking the growth assumption, we infer that the only possible solutions are the affine ones.
\item[(ii)] Next, we suppose that  $\Lambda = \R^n \times \{0\}$. By an odd reflection of the solution in the upper half space and by invoking the subquadratic growth at infinity, we obtain that the resulting function $w^+(x)$ has to be a linear function in $x_{n+1}$. A similar result holds for the oddly reflected solution of the lower half-space $w^-(x)$. Therefore,
\begin{align*}
w(x)= a(x_{n+1})_+ + b(x_{n+1})_- \mbox{ for } a,b \in \R.
\end{align*}
\item[(iii)] We finally consider the situation, in which $\emptyset \neq \Gamma_w \subset \R^{n} \times \{0\} $. Without loss of generality, we consider the restriction of $w$ and $\Gamma_w$ onto the two-dimensional subspace spanned by the $e_1$ and $e_{n+1}$ directions.
We first claim that $\Gamma_w$ can at most consist of a single point. This is a consequence of $\p_1w\geq 0 $ on $\{x_{n+1}=0\}$ (which implies that $w(x',0)\geq w(\hat x_1,0)>0$ for all $x'>\hat x_1$, $\hat x_1\in \Omega_w=\{(x_1,0): w(x_1,0)>0\}$). Thus the free boundary $\Gamma_w$ consists of only one point, which we assume to be the origin.
By Lemma \ref{lem:class} and Remark \ref{rmk:class} the two-dimensional function $w$ has the form,
\begin{align*}
w(x) = b x_{n+1} + d\Ree(x_1 + i |x_{n+1}|)^{3/2}.
\end{align*}
This yields the representation claimed in (iii).\\
If $\beta \in (0,3/2)$, we in addition deduce that $d=0$. This however is be in contradiction
with the assumption that $\Gamma_w \neq \emptyset$. Thus, with the growth assumptions as in (i), (ii) from above, we infer that the case $\Gamma_w \neq \emptyset$
cannot occur.
\end{itemize}
This concludes the proof.
\end{proof}

With the result of Lemma \ref{lem:AU} and Lemma \ref{lem:global} at hand, we proceed to the almost Lipschitz regularity of solutions to \eqref{eq:vari_0}:

\begin{lem}\label{lem:almost_linear}
Let $w$ be a solution to the thin obstacle problem with $\|w\|_{L^2(B_1)}=1$.
Assume that $a^{ij}\in C^{0,\alpha}(B_1)$ and $g^i\in C^{0,\alpha}(B_1)$ for some
$\alpha\in (0,1)$. Further suppose that the normalization assumption (N) is satisfied.
Then for any $\beta\in (0,1)$ there exists
$C=C(\beta, \|a^{ij}\|_{C^{0,\alpha}}, \|g^i\|_{C^{0,\alpha}},n)>0$ such that
\begin{align*}
\sup_{B_{r}(x_0)}|w|\leq C r^{\beta},\quad \forall r\in (0,1/4), \quad \forall x_0\in B_{1/2}\cap \Gamma_w.
\end{align*}
\end{lem}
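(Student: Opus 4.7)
The plan is to run a compactness-contradiction argument in the spirit of Andersson \cite{An16}. Suppose the claim fails for some fixed $\beta\in(0,1)$: then there exist sequences of solutions $w_k$ to \eqref{eq:vari_0} with the normalization (N) and $\|w_k\|_{L^2(B_1)}\leq 1$, free boundary points $x_k\in\Gamma_{w_k}\cap B_{1/2}$ and radii $\rho_k\in(0,1/4)$ with $M_k(\rho_k):=\sup_{B_{\rho_k}(x_k)}|w_k|>k\rho_k^\beta$. First, I would pick a ``bad scale'' $\tau_k\in[\rho_k,1/4]$ as a maximizer of $r\mapsto M_k(r)/r^\beta$ on this interval, so that $M_k(R\tau_k)\leq R^\beta M_k(\tau_k)$ for every $R\in[1,1/(4\tau_k)]$ while $M_k(\tau_k)/\tau_k^\beta\geq k\to\infty$. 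Since Lemma \ref{lem:AU} furnishes a uniform $L^\infty$-bound on $w_k$, this forces $\tau_k\to 0$.

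Next I would pass to the blow-up $\tilde w_k(y):=w_k(x_k+\tau_k y)/M_k(\tau_k)$, defined on $B_{1/(4\tau_k)}$. By construction $\sup_{B_1}|\tilde w_k|=1$, $\sup_{B_R}|\tilde w_k|\leq R^\beta$ on the admissible range, and $\tilde w_k(0)=0$ (since the continuous solution $w_k$ vanishes at the free boundary point $x_k$). The function $\tilde w_k$ solves a rescaled variational inequality of the same structure, with coefficients $\tilde a_k^{ij}(y):=a^{ij}(x_k+\tau_k y)$ and effective inhomogeneity $\tilde g_k^i(y):=(\tau_k/M_k(\tau_k))\,g^i(x_k+\tau_k y)$, the latter having $C^{0,\alpha}$-norm controlled by $k^{-1}\tau_k^{1-\beta}\|g^i\|_{C^{0,\alpha}}\to 0$ because $\beta<1$.

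After extracting a subsequence, $x_k\to x_\infty\in B_1'\cap\overline{B_{1/2}}$, and applying the uniform $C^{0,\beta'}$-estimate of Lemma \ref{lem:AU} to the rescaled problems yields a locally uniformly convergent subsequence $\tilde w_k\to \tilde w_\infty\in W^{1,2}_{\mathrm{loc}}(\R^{n+1})$. The rescaled coefficients converge locally uniformly to the constant matrix $(a^{ij}(x_\infty))$; by (N), $a^{i,n+1}(x_\infty)=0$ for $i\leq n$, so this matrix is block-diagonal with respect to $\{y_{n+1}=0\}$, and an affine change of variables preserving the thin manifold reduces the limit to the isotropic, homogeneous global problem \eqref{eq:limit_equ} with the growth bound $|\tilde w_\infty(y)|\leq C|y|^\beta$. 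Since $\beta<1$, Lemma \ref{lem:global}(i) forces $\tilde w_\infty\equiv c$ for some $c\geq 0$; combined with $\tilde w_\infty(0)=0$ this yields $c=0$, contradicting $\sup_{B_1}|\tilde w_\infty|=1$.

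The main technical hurdle is passing to the limit in the rescaled variational inequality, which requires both local strong $L^2$-compactness (from the uniform $C^{0,\beta'}$-estimate of Lemma \ref{lem:AU}) and convergence of traces on $\{y_{n+1}=0\}$ to preserve the unilateral constraint $\tilde w_k\geq 0$ in the limit, together with weak lower semicontinuity for the quadratic part of the inequality. A secondary, bookkeeping point is that rescaling neither inflates the H\"older seminorms of the coefficients nor degrades their ellipticity, so Lemma \ref{lem:AU} indeed applies to the blow-up sequence uniformly in $k$.
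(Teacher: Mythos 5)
Your proposal is correct and follows essentially the same compactness-contradiction scheme as the paper's proof: extract a contradiction sequence, select a bad scale by maximizing the growth quotient $M_k(r)/r^\beta$, use Lemma~\ref{lem:AU} to force the scale to zero and to obtain compactness for the blow-ups, pass to the limit in the rescaled variational inequality, and invoke the Liouville theorem (Lemma~\ref{lem:global}) together with the vanishing $\tilde w_\infty(0)=0$ to rule out nontrivial limits. The one genuine variation is that you keep the centers $x_k$ floating (with the concomitant affine diagonalization of $(a^{ij}(x_\infty))$, using the off-diagonal part of (N)), whereas the paper fixes $x_0 = 0$ so that (N) gives the identity directly and dispenses with the other base points by a WLOG remark; this is a harmless bookkeeping difference, and your version is in fact a bit more explicit about what that WLOG involves.
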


\begin{proof}
Without loss of generality we assume that $0\in \Gamma_w$ and first show the estimate at
the origin. The argument for the other free boundary points follows similarly.\\
We assume that the conclusion were wrong: Then there existed $\beta\in (0,1)$ and a
sequence of solutions $w^k$ with $\|w^k\|_{L^2(B_1)}=1$ and $w^k(0)=0$, and radii $r_k$ such that
\begin{align*}
\sup_{B_{r_k}}|w^k|=kr_k^{\beta} \mbox{ and }
\sup_{B_{R}}|w^k|\leq kR^{\beta} \mbox{ for any } R\geq r_k.
\end{align*}
By Lemma~\ref{lem:AU}, $\sup_{B_{r_k}}|w^k|\leq C r_k^{\tilde{\beta}}$
for some $\tilde{\beta}\in (0,1)$ and for some universal constant $C>0$.
This together with the contradiction assumption implies that $r_k\rightarrow 0_+$. \\
Now we consider the blow-up $\tilde{w}^k(x):=\frac{w^k(r_kx)}{kr_k^\beta}$. The choice of $r_k$ yields that
\begin{align}\label{eq:growth_inf}
\sup_{B_{\tilde{R}}}|\tilde{w}^k|\leq \tilde{R}^{\beta}\quad \mbox{ for all } \tilde{R}>1,\text{ and } \sup_{B_1}|\tilde{w}^k|=1.
\end{align}
Furthermore, by Lemma~\ref{lem:AU},
the functions $\tilde{w}^k$ are uniformly bounded in $W^{1,2}_{loc}(\R^{n+1})$ and
$C^{0,\tilde{\beta}}_{loc}(\R^{n+1})$. Thus up to a subsequence,
$\tilde{w}^k\rightarrow w_0$ weakly in $W^{1,2}_{loc}(\R^{n+1})$ and strongly in
$C^{0,\beta}_{loc}(\R^{n+1})$ for all $\beta \in (0,\tilde{\beta})$. We claim that for any $R>0$ the limiting function $w_0$ is a solution to (\ref{eq:limit_equ}). In order to observe this, we note that
the functions $\tilde{w}^k$ solve
\begin{align*}
\int_{B_{1/r_k}}a^{ij}(r_k\cdot)\p_i\tilde{w}^k\p_j\eta dx +\int_{B_{1/r_k}}\frac{r_k^{1-\beta}}{k}g^i(r_kx)\p_i\eta dx
\geq 0\\
\mbox{ for all } \eta\in C^\infty_0(B_{1/r_k}) \mbox{ such that } \eta + \tilde{w}^{k}\geq 0 \mbox{ on } B_{1/r_k}'.
\end{align*}
We consider a test function $\eta$ which is admissible with respect to $w_0$, i.e. $\eta \in C^{\infty}_0(B_R)$ and $w_0 + \eta \geq 0$ on $B'_R$. We claim that it is possible to
find a sequence $\eta_k\in C^\infty_0(B_R)$ such that $\eta_k$ is an admissible test function for
$\tilde{w}^k$ in $B_R$, i.e.
$\eta_k+\tilde{w}_k\geq 0$ on $B'_R$, and moreover satisfies
$\eta_k\rightarrow\eta$ strongly in $W^{1,2}(B_R)$. Indeed, this is a consequence of
the uniform convergence of $\tilde{w}^k$ to $w_0$ in $B_R$: If $\|\tilde{w}^k-w_0\|_{L^{\infty}(B_R)}\leq \epsilon$,
one can for instance consider the functions
$\eta_k(x):= \eta(x) + \epsilon + \va_k(x)$. Here $\va_k$ is a $C^{\infty}$ function
interpolating between
zero and $\epsilon$, chosen such that $\eta_k$ has compact support in $B_R$ and a gradient that is controlled in terms of the quotient
$\epsilon/d$, where $d$ denotes the distance of $\supp(\eta)$ from the boundary of $B_R$.\\
Thus, using the continuity of $a^{ij}$ and $g^i$ and the normalization assumption (N), the limit $w_0$ satisfies
\begin{align*}
\int\limits_{\R^{n+1}} \nabla w_0\cdot \nabla \eta dx\geq 0 \mbox{ for all } \eta\in C^\infty_0(\R^{n+1}) \mbox{ with } \eta + w_0 \geq 0.
\end{align*}
By a density argument this implies that $w_0$ is indeed a solution to \eqref{eq:limit_equ}.\\
By virtue of \eqref{eq:growth_inf} and the locally uniform convergence, we further
infer that for some $\beta\in (0,1)$
$$\sup_{B_R}|w_0|\leq R^\beta\text{ for any } R>1 \text{ and }\sup_{B_1}|w_0|=1.$$
Then however the Liouville type result from Lemma \ref{lem:global} in combination with $w_0(0)=0$
yields that $w_0\equiv 0$ in $\R^{n+1}$. This is a contradiction to our normalization and the strong convergence.
\end{proof}

The almost linear growth estimate at the free boundary together with the standard elliptic estimates away from the free boundary implies the almost Lipschitz regularity of the solution. Since the argument is standard, we do not repeat it here but refer to the proof of Proposition 4.22 in \cite{KRS14} for example.

\subsection{Almost optimal regularity}\label{subsec:almost_opt}
In this section, we implement the compactness argument from \cite{An16} to upgrade the regularity result for our solutions to the variable coefficient thin obstacle problem from $C^{0,\gamma}$ with $\gamma \in(0,1)$ to the almost optimal $C^{1,\beta}$, with $\beta \in(0,\min\{1/2,\alpha\})$, regularity (c.f. Proposition \ref{prop:Hoelder}). As already in the proof of Lemma \ref{lem:almost_linear}, the Liouville theorem from Lemma \ref{lem:global} plays an important role in this.\\

Analyzing solutions of the variable coefficient thin obstacle problem around the free
boundary and expecting that there the solutions to leading order behave
like linear functions, we introduce the following notation:

\begin{defi}[Projections]
\label{defi:proj}
Let
$$\mathcal{P}=\{\ell:\R^{n+1}\rightarrow \R^{n+1}: \ell(x)=a_0x_{n+1},  a_0\in \R\}.$$
We use $\Pr(u,r,x_0)$ to denote the $L^2$ projection of $u\in L^2(B_r(x_0))$ onto $\mathcal{P}$. More precisely, $\Pr(u,r,x_0)\in \mathcal{P}$ satisfies
\begin{align*}
\|u-\Pr(u,r,x_0)\|_{\tilde{L}^2(B_r(x_0))}=\inf_{p\in \mathcal{P}}\|u-p\|_{\tilde{L}^2(B_r(x_0))},
\end{align*}
where here and in the sequel, $\|\cdot\|_{\LL(\Omega)} := \frac{1}{|\Omega|}\|\cdot\|_{L^2(\Omega)}$ for bounded $\Omega \subset \R^{n+1}$.
If $x_0$ is the origin, we abbreviate $\Pr(u,r):=\Pr(u,r,0)$ for simplicity.
\end{defi}

Connecting the possible blow-up solutions and the projections from Definition \ref{defi:proj},
we make the following remark:

\begin{rmk}
\label{rmk:global_proj}
If $u_0(x)=a_0(x_{n+1})_+-a_1(x_{n+1})_-$ for some constants $a_0, a_1$ with $a_0-a_1\leq 0$, then a direct computation yields that
\begin{align*}
\Pr(u_0,r)=\frac{a_0+a_1}{2}x_{n+1} \mbox{ for all }  r\in (0,1),
\end{align*}
and
$
(u_0-\Pr(u_0,r))(x)=\frac{a_0-a_1}{2}|x_{n+1}|.
$
\end{rmk}

In the next three lemmata, we collect properties of the projections, which
will play an important role in our further discussion:

\begin{lem}
\label{lem:growth2}
Let $w\in L^2_{loc}(\R^{n+1})$ and
suppose that for a small constant $\mu\in(0,1)$, and for all $R\geq 1$
\begin{align}
\label{eq:growth2}
\|w-\Pr(w,R)\|_{\tilde{L}^2(B_R)}\leq \mu \|w\|_{\tilde{L}^2(B_R)}.
\end{align}
Then for any $\beta >0$, a sufficiently small choice of $\mu =\mu(\beta)>0$
yields
\begin{align*}
 \|w\|_{\LL(B_R)} \leq R^{1+\beta}\|w\|_{\LL(B_1)} \mbox{ for all } R\geq 2.
\end{align*}
\end{lem}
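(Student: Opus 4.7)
The plan is to convert the hypothesis \eqref{eq:growth2} into a dyadic doubling estimate for the linear profile and then iterate. Since every element of $\mathcal{P}$ has the form $a x_{n+1}$, I write $\Pr(w,R)(x) = a_R x_{n+1}$ with $a_R \in \R$; a direct scaling computation gives $\|a_R x_{n+1}\|_{\LL(B_R)} = c_n R |a_R|$, where $c_n := \|x_{n+1}\|_{\LL(B_1)}$ is a purely dimensional constant. Combining this identity with \eqref{eq:growth2} and the triangle inequality yields the two-sided bound
\[
(1-\mu)\|w\|_{\LL(B_R)} \;\leq\; c_n R |a_R| \;\leq\; (1+\mu)\|w\|_{\LL(B_R)}, \qquad R \geq 1,
\]
so that controlling the sequence $\{\|w\|_{\LL(B_R)}\}$ is equivalent to controlling the sequence $\{|a_R|\}$.

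I would then compare the projections at neighboring dyadic scales. By the triangle inequality applied on the smaller ball and by \eqref{eq:growth2} at both $R$ and $2R$,
\[
c_n R |a_{2R} - a_R| = \|\Pr(w,2R) - \Pr(w,R)\|_{\LL(B_R)} \leq \mu \|w\|_{\LL(B_R)} + 2^{(n+1)/2}\mu \|w\|_{\LL(B_{2R})},
\]
the factor $2^{(n+1)/2}$ accounting for the loss when restricting the normalized $L^2$ norm from $B_{2R}$ to $B_R$. Feeding the two-sided bound from the previous step into this inequality and rearranging produces a dyadic growth relation of the form $|a_{2R}| \leq \lambda(\mu) |a_R|$, with $\lambda(\mu) \to 1$ as $\mu \to 0$. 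Iterating yields $|a_{2^k}| \leq \lambda(\mu)^k |a_1|$ for every $k \geq 0$, while $|a_1| \leq c_n^{-1}(1+\mu)\|w\|_{\LL(B_1)}$.

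Given $\beta > 0$, I would choose $\mu = \mu(\beta)$ so small that both $\lambda(\mu) \leq 2^{\beta/2}$ and $(1+\mu)/(1-\mu) \leq 2^{\beta/2}$; the two-sided bound then converts the iteration back into $\|w\|_{\LL(B_{2^k})} \leq 2^{k(1+\beta)}\|w\|_{\LL(B_1)}$ for every $k \geq 1$. For a general $R \geq 2$ I would pick $k$ with $2^k \leq R < 2^{k+1}$ and use $\|w\|_{\LL(B_R)} \leq 2^{(n+1)/2}\|w\|_{\LL(B_{2^{k+1}})}$ to pass to the nearest dyadic scale. I expect the only substantive obstacle to be constant bookkeeping, since the conclusion insists on the sharp multiplicative constant $1$ in front of $R^{1+\beta}$ rather than some $C(\mu)R^{1+\beta}$; this is handled by splitting the loss $\beta$ into two halves, one to absorb the geometric iteration factor $\lambda(\mu)^k$ and one to absorb the dimensional and normalization constants, the latter becoming admissible precisely because $R \geq 2$ gives access to an extra factor of $R^{\beta/2}$.
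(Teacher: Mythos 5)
Your argument follows the paper's proof closely: extract the coefficient $a_R$ from $\Pr(w,R)=a_R x_{n+1}$, establish the two-sided comparison $(1-\mu)\|w\|_{\LL(B_R)}\leq c_nR|a_R|\leq(1+\mu)\|w\|_{\LL(B_R)}$, derive a dyadic doubling estimate $|a_{2R}|\leq\lambda(\mu)|a_R|$ with $\lambda(\mu)\to1$, and iterate. That part is fine and matches the paper's eq.\ (\ref{eq:aux1})--(\ref{eq:mu}).

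The last step is where your bookkeeping breaks down, and it does not in fact follow from what you wrote. Rounding $R$ up to the nearest dyadic scale $2^{k+1}$ costs a fixed inclusion factor
\[
\|w\|_{\LL(B_R)}\leq\Bigl(\tfrac{2^{k+1}}{R}\Bigr)^{(n+1)/2}\|w\|_{\LL(B_{2^{k+1}})},
\]
which for $R$ near $2^k$ is as large as $2^{(n+1)/2}$; this constant depends on $n$ alone, not on $\mu$, and so cannot be made small. Meanwhile, the ``extra factor of $R^{\beta/2}$'' you invoke is only $2^{\beta/2}$ when $R$ is close to $2$, which tends to $1$ as $\beta\to0$. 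Concretely, with the dyadic bound $\|w\|_{\LL(B_{2^{k+1}})}\leq 2^{(k+1)(1+\beta)}\|w\|_{\LL(B_1)}$ one ends up needing $(2^{k+1}/R)^{(n+1)/2+1+\beta}\leq1$, which is false since $2^{k+1}/R>1$. So the proposed absorption of the dimensional constant by $R^{\beta/2}$ does not go through.

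The fix is to avoid the final jump to a larger ball: the doubling estimate $|a_{\rho R}|\leq\lambda_\rho(\mu)|a_R|$ holds verbatim for any scale factor $\rho\in(1,2]$ (with $\lambda_\rho(\mu)\leq\lambda_2(\mu)=:\lambda(\mu)$ since the constants are monotone in $\rho$), and the hypothesis (\ref{eq:growth2}) is assumed for \emph{all} $R\geq1$, not just dyadic ones. Iterating from $1$ to $R$ in $\lceil\log_2R\rceil$ steps of size at most $2$ yields $|a_R|\leq\lambda(\mu)^{\lceil\log_2 R\rceil}|a_1|\leq\lambda(\mu)\,R^{\log_2\lambda(\mu)}|a_1|$, hence
\[
\|w\|_{\LL(B_R)}\leq\frac{(1+\mu)\lambda(\mu)}{1-\mu}\,R^{1+\log_2\lambda(\mu)}\,\|w\|_{\LL(B_1)}.
\]
Now choose $\mu$ small so that $\log_2\lambda(\mu)\leq\beta/2$ and $\tfrac{(1+\mu)\lambda(\mu)}{1-\mu}\leq2^{\beta/2}$; for $R\geq2$ the prefactor is $\leq R^{\beta/2}$ and the claim follows. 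With this correction your proof is complete and essentially coincides with the one in the paper.
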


\begin{rmk}
As the lemma is formulated as a growth result for large $R$,
we emphasize that is is of particular interest for $\beta>0$ being a \emph{small} constant.
\end{rmk}

\begin{proof}
For all $R\geq 1$, our assumption (\ref{eq:growth2}) implies
\begin{align}
\label{eq:aux1}
\frac{1}{1+\mu}\|\Pr(w,R)\|_{\tilde{L}^2(B_R)}\leq \|w\|_{\tilde{L}^2(B_R)}\leq\frac{1}{1-\mu}\|\Pr(w,R)\|_{\tilde{L}^2(B_R)}.
\end{align}
Furthermore,
\begin{align*}
& \mu\|w\|_{\tilde{L}^2(B_{2R})}\geq 2^{(n+1)/2}\|w-\Pr(w,2R)\|_{\tilde{L}^2(B_{R})}\\
&\geq 2^{(n+1)/2}\|\Pr(w,R)-\Pr(w,2R)\|_{\tilde{L}^2(B_{R})}-2^{(n+1)/2}\|w-\Pr(w,R)\|_{\tilde{L}^2(B_{R})}\\
&\geq 2^{(n+1)/2}\|\Pr(w,R)-\Pr(w,2R)\|_{\tilde{L}^2(B_{R})}-\mu 2^{(n+1)/2}\|w\|_{\tilde{L}^2(B_{R})},
\end{align*}
which upon rearrangement results in
\begin{align}\label{eq:iteration}
2^{(n+1)/2}\|\Pr(w,R)-\Pr(w,2R)\|_{\tilde{L}^2(B_{R})}\leq \mu\left(\|w\|_{\tilde{L}^2(B_{2R})}+2^{(n+1)/2}\|w\|_{\tilde{L}^2(B_{R})}\right).
\end{align}
Let $\Pr(w,R)=: a_R x_{n+1}$ for some constant $a_R \in \R$. We note that as $\mu<1$,
we in particular obtain $a_R\neq 0$ and infer the identities
\begin{align*}
\|\Pr(w,R)\|_{\tilde{L}^2(B_R)}&= R|a_R|,\\
\|\Pr(w,R)-\Pr(w,2R)\|_{\tilde{L}^2(B_{R})}&= R|a_R-a_{2R}|.
\end{align*}
Using (\ref{eq:aux1}) and \eqref{eq:iteration}, we hence deduce that
\begin{align}
\label{eq:mu}
2^{(n+1)/2}|a_R-a_{2R}|\leq \frac{\mu}{1-\mu}\left(a_{2R}+2^{(n+1)/2}a_{R}\right).
\end{align}
We define a constant $\sigma_R$ by setting $a_{2R}=:\sigma_R a_R$.
The inequality \eqref{eq:mu} then implies that
\begin{align*}
1-\tau(\mu)\leq \sigma_R \leq 1+\tau(\mu),
\end{align*}
for some $\tau(\mu)$ which can be arbitrarily close to $0$ (independently of $R\geq1$), if $\mu$
is chosen sufficiently small.
Thus,
\begin{align*}
\|w\|_{\tilde{L}^2(B_{2^{k}R})}
&\leq \frac{1}{1-\mu}\|\Pr(w,2^{k}R)\|_{\tilde{L}^2(B_{2^{k}R})}\leq \frac{2^{k}R|a_{2^{k}R}|}{1-\mu}\leq \frac{2^{k}R(1+\tau(\mu))^k|a_{R}|}{1-\mu} \\
&\leq \frac{1+\mu}{1-\mu} 2^{k}(1+\tau(\mu))^k\|w\|_{\tilde{L}^2(B_{R})}.
\end{align*}
Therefore for any $\beta>0$ there exists sufficiently small $\mu=\mu(\beta)>0$ (e.g. one can take $\mu$ such that $\frac{1+\mu}{1-\mu}(1+\tau(\mu))^k\leq 2^{k\beta}$),
such that if \eqref{eq:growth2} is satisfied, then
\begin{align*}
\|w\|_{\tilde{L}^2(B_{R})}\leq  R^{1+\beta}\|w\|_{\LL(B_1)} \mbox{ for all } R\geq 2.
\end{align*}
\end{proof}

As a second key property of the projections, we note the following orthogonality result:

\begin{lem}[Orthogonality]
\label{lem:orthogonal}
Let $w_k:B_1 \rightarrow \R$ be a sequence of functions in $L^2(B_1)$. Assume that
\begin{align*}
 \|w_k - \Pr(w_k,1)\|_{\LL(B_1)} =: \delta_k \rightarrow 0,
\end{align*}
and that
\begin{align*}
 v_k(x):= \frac{w_k - \Pr(w_k,1) }{\delta_k} \rightharpoonup v_0 \mbox{ in } L^2(B_1).
\end{align*}
Then, $\Pr(v_0,1)=0$.
\end{lem}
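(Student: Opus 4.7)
The plan is to exploit the fact that $\Pr(\cdot,1)$ is the $L^2(B_1)$--orthogonal projection onto the one-dimensional subspace $\mathcal{P} = \spa\{x_{n+1}\}$, together with the fact that orthogonality to a fixed closed subspace is preserved under weak limits. No PDE information is needed; the statement is purely functional-analytic.

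First I would record the defining property of the projection: since $\Pr(u,1)$ is the $L^2$ minimizer of $\|u-p\|_{L^2(B_1)}$ over $p\in\mathcal{P}$, the first variation in the direction $x_{n+1}$ gives
\begin{align*}
\int_{B_1} \bigl(u-\Pr(u,1)\bigr)\, x_{n+1}\, dx = 0 \quad\text{for every } u\in L^2(B_1).
\end{align*}
Applying this identity to $u = w_k$ and dividing by $\delta_k \neq 0$ yields
\begin{align*}
\int_{B_1} v_k(x)\, x_{n+1}\, dx = 0 \quad\text{for all } k.
\end{align*}
Since $x_{n+1} \in L^2(B_1)$ is a fixed test function, the weak convergence $v_k \rightharpoonup v_0$ in $L^2(B_1)$ allows us to pass to the limit and conclude
\begin{align*}
\int_{B_1} v_0(x)\, x_{n+1}\, dx = 0,
\end{align*}
i.e., $v_0$ is $L^2$-orthogonal to $\mathcal{P}$.

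Finally, since $\Pr(v_0,1) = c\, x_{n+1}$ for some $c \in \R$, and $v_0 - \Pr(v_0,1) \perp \mathcal{P}$, one has $\int_{B_1} v_0\, x_{n+1}\, dx = c \int_{B_1} x_{n+1}^2\, dx$. Combined with the previous display and $\int_{B_1} x_{n+1}^2\, dx > 0$, this forces $c=0$, so $\Pr(v_0,1)=0$. There is no real obstacle here; the only point worth verifying carefully is that the projection is literally the $L^2$-projection onto $\spa\{x_{n+1}\}$ (as spelled out in Definition \ref{defi:proj}), which makes both the orthogonality identity and the closure under weak limits immediate.
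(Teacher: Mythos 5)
Your proposal is correct and matches the paper's proof almost exactly: both record the first-order orthogonality condition $\int_{B_1} x_{n+1}(w_k-\Pr(w_k,1))\,dx=0$, divide by $\delta_k$, pass to the weak limit against the fixed test function $x_{n+1}$, and then conclude $\Pr(v_0,1)=0$ from the resulting orthogonality. The only cosmetic difference is the last step (you solve for the coefficient $c$ directly, the paper expands $\|v_0-ax_{n+1}\|^2$), which is the same computation.
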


\begin{proof}
As $\Pr(w_k,1)$ denotes the best linear approximation to $w_k$, we have that for all $k\in \N$
\begin{align*}
 \int\limits_{B_1} x_{n+1}(w_k - \Pr(w_k,1)) dx = 0.
\end{align*}
In particular,
$\int\limits_{B_1} x_{n+1}v_k dx = 0$.
Using the weak convergence of $v_k$ to $v_0$ hence results in
$\int\limits_{B_1} x_{n+1}v_0 dx = 0$.
The claim now follows by exploiting this orthogonality:
$$
 \|v_0 - a x_{n+1} \|_{\LL(B_1)} ^2
 =\|v_0\|_{\LL(B_1)}^2 + a^2\|x_{n+1}\|_{\LL(B_1)}^2 \geq \|v_0\|_{\LL(B_1)}^2.
$$
\end{proof}

\begin{rmk}
\label{rmk:orthogonal}
We stress that the previous lemma and its proof do not build on special properties of the projections onto the space $\mathcal{P}$ of linear polynomials, but is a property that is enjoyed by all ($L^2$) projections onto finite dimensional, closed cones (where the notion of orthogonality is suitably adapted to the corresponding set-up). In Section \ref{sec:opt_reg} we will frequently use this in the context of the space $\mathcal{E}$ (c.f. Definition \ref{defi:proj_3/2}).
\end{rmk}

Finally, as a last auxiliary result before addressing the actual growth estimates for solutions
to the thin obstacle problem, we exploit the almost linear growth (c.f. Lemma \ref{lem:almost_linear}), in order to deduce bounds on the
projections to normalized solutions to the variable coefficient thin obstacle problem:

\begin{lem}
\label{lem:proj}
Let $w:B_1 \rightarrow \R$ be a solution to the variable coefficient
thin obstacle problem with $a^{ij},g^i\in C^{0,\alpha}(B_1)$ for some $\alpha\in (0,1)$, satisfying the normalization condition (N).
Assume that $\|w\|_{\LL(B_1)}=1$ and that $u(0)=0$. Let $r\in (0,1)$ and define
$a_r:= \frac{|\Pr(w,r)|}{|x_{n+1}|}$. Then, for any
$\epsilon \in (0,1)$ and any $r\in(0,1/2)$
\begin{align*}
 a_r \leq C(\epsilon, \|g\|_{C^{0,\alpha}}, \|a^{ij}\|_{C^{0,\alpha}}) r^{-\epsilon}.
\end{align*}
\end{lem}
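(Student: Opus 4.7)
The plan is to reduce the claim to the almost Lipschitz regularity of $w$, which was already established via Lemma \ref{lem:almost_linear} together with standard elliptic estimates away from the free boundary.

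First I would translate the projection bound into an $L^2$ control on $w$. Since $\mathcal{P}$ is a one-dimensional linear subspace of $L^2(B_r)$ and $\Pr(\cdot, r)$ is the associated orthogonal projection, the Pythagorean identity yields the contraction estimate $\|\Pr(w,r)\|_{\tilde L^2(B_r)} \leq \|w\|_{\tilde L^2(B_r)}$. Writing $\Pr(w,r)(x) = c_r\, x_{n+1}$ with $a_r = |c_r|$, a direct integration in spherical coordinates gives $\|x_{n+1}\|_{\tilde L^2(B_r)} = r/\sqrt{n+3}$. Combining these two facts produces
\[
a_r \;=\; \frac{\|\Pr(w,r)\|_{\tilde L^2(B_r)}}{\|x_{n+1}\|_{\tilde L^2(B_r)}} \;\leq\; \frac{\sqrt{n+3}}{r}\,\|w\|_{\tilde L^2(B_r)}.
\]

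Next I would invoke the almost Lipschitz regularity of solutions noted right after Lemma \ref{lem:almost_linear}: for any $\beta\in(0,1)$ one has $w \in C^{0,\beta}(B_{1/2})$ with norm controlled in terms of $\beta$, $\|a^{ij}\|_{C^{0,\alpha}}$, $\|g^i\|_{C^{0,\alpha}}$, $n$, and $\|w\|_{\tilde L^2(B_1)} = 1$. Since $w(0) = 0$, this yields the pointwise bound $|w(x)| \leq C(\beta)|x|^{\beta}$ on $B_{1/2}$, and consequently $\|w\|_{\tilde L^2(B_r)} \leq C(\beta)\, r^{\beta}$ for $r \in (0,1/2)$.

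The conclusion is then obtained by choosing $\beta = 1-\epsilon \in (0,1)$ and concatenating the two estimates:
\[
a_r \;\leq\; \frac{\sqrt{n+3}}{r}\cdot C(\epsilon)\, r^{1-\epsilon} \;=\; C'(\epsilon)\, r^{-\epsilon}.
\]
There is no substantive obstacle; the argument is a direct consequence of the orthogonal-projection contraction together with the already-proven almost Lipschitz growth. The only detail to double-check is that this almost Lipschitz regularity is a global $C^{0,\beta}(B_{1/2})$ statement and therefore does not require $0$ itself to lie on the free boundary --- only the normalization $w(0)=0$, which is part of the hypotheses, is used to pass from $C^{0,\beta}$ control to the pointwise decay at the origin.
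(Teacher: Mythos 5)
Your proof is correct and follows essentially the same route as the paper: bound the projection coefficient $a_r$ in terms of the size of $w$ on $B_r$, then invoke the almost Lipschitz regularity furnished by Lemma~\ref{lem:almost_linear} together with the pointwise normalization $w(0)=0$ to get the decay $\|w\|_{\LL(B_r)}\lesssim r^{\beta}$ and choose $\beta = 1-\epsilon$. The only cosmetic difference is that you pass through the $L^2$ orthogonal-projection contraction and an explicit computation of $\|x_{n+1}\|_{\LL(B_r)}$, whereas the paper uses the pointwise bound $|\Pr(w,r)|\leq C\sup_{B_r}|w|$; these are interchangeable.
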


\begin{proof}
Lemma \ref{lem:almost_linear} yields that $w \in C^{0,\beta}(B_{1/2})$ for all $\beta \in(0,1)$ and that
\begin{align*}
\sup\limits_{B_r} |w|
  \leq Cr^{\beta},\quad C=C(\beta,n,\|a^{ij}\|_{C^{0,\alpha}},\|g^i\|_{C^{0,\alpha}}).
\end{align*}
Using that $|\Pr(w,r)|\leq C \sup\limits_{B_r}|w|$ and choosing $\beta \in (0,1)$
sufficiently close to one, thus implies the desired result.
\end{proof}

With these auxiliary results at hand, we now proceed to a first central growth estimate for
solutions to the thin obstacle problem around the free boundary. This provides the
basis for the almost optimal regularity result of Proposition \ref{prop:Hoelder}:

\begin{lem}
\label{lem:growth_a}
Let $w$ be a solution to the variable coefficient thin obstacle problem with
$a^{ij}, g^i\in C^{0,\alpha}(B_1)$ satisfying the normalization condition (N).
Assume that $\|w\|_{\tilde{L}^2(B_1)}=1$ and that $0\in \Gamma_w$.
Then for all $\beta \in (0,\min\{\alpha,1/2\})$ there exists $C=C(n,\|a^{ij}\|_{C^{0,\alpha}}, \|g^i\|_{C^{0,\alpha}},\beta,\alpha)$
such that for any $r\in (0,1/2)$
\begin{align*}
\left\|w-\Pr(w,r)\right\|_{\tilde{L}^2(B_r)}\leq C r^{1+\beta}.
\end{align*}
\end{lem}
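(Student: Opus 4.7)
I would argue by contradiction using Andersson's compactness/linearization, projecting onto $\mathcal{P}$ and exploiting the Liouville theorem of Lemma~\ref{lem:global}(ii) together with the orthogonality of Lemma~\ref{lem:orthogonal}. Suppose the conclusion fails: then there exist solutions $w_k$ satisfying the hypotheses and radii $r_k \in (0,1/2)$ with $M_k := \|w_k - \Pr(w_k,r_k)\|_{\LL(B_{r_k})}/r_k^{1+\beta} \to \infty$. A standard maximal-radius selection arranges that $\|w_k - \Pr(w_k,\rho)\|_{\LL(B_\rho)} \leq 2 M_k \rho^{1+\beta}$ for every $\rho \in [r_k,1/2]$, and Lemma~\ref{lem:almost_linear} together with $\|w_k\|_{\LL(B_1)} = 1$ forces $r_k \to 0_+$. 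I would then set
\begin{equation*}
v_k(x) := \frac{w_k(r_k x) - \Pr(w_k,r_k)(r_k x)}{M_k r_k^{1+\beta}},
\end{equation*}
so that $\|v_k\|_{\LL(B_1)} = 1$ and, by the rescaling invariance of the $L^2$-projection, $\Pr(v_k,1) = 0$. Writing $\Pr(w_k,\rho) = a_\rho x_{n+1}$, a telescoping argument in the spirit of \eqref{eq:iteration} yields $|a_{Rr_k} - a_{r_k}| \leq C M_k (Rr_k)^\beta$, giving the uniform growth estimate $\|v_k\|_{\LL(B_R)} \leq C R^{1+\beta}$ for $R \in [1, 1/(2 r_k)]$.

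The auxiliary function $u_k(y) := w_k(y) - a_{r_k} y_{n+1}$ still solves a variable coefficient thin obstacle problem on $B_1$ with $u_k(y',0) = w_k(y',0) \geq 0$, the only change being the modified inhomogeneity $\hat{g}^i := g^i + a_{r_k}(a^{i,n+1} - \delta^{i,n+1})$. The normalization (N) ensures $\hat{g}^i(0) = 0$, while the $C^{0,\alpha}$-regularity together with the bound $|a_{r_k}| \leq C r_k^{-\epsilon}$ from Lemma~\ref{lem:proj} gives $|\hat{g}^i(y)| \leq C r_k^{-\epsilon}|y|^\alpha$ for arbitrarily small $\epsilon>0$. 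The rescaled inhomogeneity for $v_k$ is then of size $O(r_k^{\alpha-\beta-\epsilon}/M_k)$ on compact sets, which tends to zero once $\epsilon < \alpha - \beta$; this is the point at which the hypothesis $\beta < \alpha$ is used decisively. Meanwhile the rescaled coefficients $a^{ij}(r_k \cdot)$ converge to $\delta^{ij}$ by (N) and continuity. Lemma~\ref{lem:AU} applied to the rescaled problem then furnishes uniform $C^{0,\gamma}$-bounds on compact subsets, and passing to a subsequence yields $v_k \to v_0$ locally uniformly (and weakly in $W^{1,2}_{loc}$), with $v_0$ a global solution of the constant coefficient thin obstacle problem.

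The limit $v_0$ inherits $v_0(0) = 0$, $\|v_0\|_{\LL(B_1)} = 1$, the orthogonality $\Pr(v_0,1) = 0$ from Lemma~\ref{lem:orthogonal}, and $\sup_{B_R}|v_0| \leq C R^{1+\beta}$ with $1+\beta < 3/2$. Lemma~\ref{lem:global}(ii) therefore classifies such $v_0$ as either a non-negative constant or $a|x_{n+1}| + b x_{n+1}$ with $a \leq 0$, and asserts $\Gamma_{v_0} = \emptyset$ in both cases. The condition $v_0(0)=0$ excludes the constant case and the orthogonality forces $b=0$, reducing $v_0 = a|x_{n+1}|$ with $a < 0$. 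The main obstacle is now closing the contradiction; I expect this to follow by using that the free boundary point $0 \in \Gamma_{w_k}$ is preserved under the rescaling to give $0 \in \Gamma_{v_k}$, combined with a quantitative non-coincidence statement for thin obstacle solutions that survives the passage to the limit and contradicts $\Gamma_{v_0} = \emptyset$.
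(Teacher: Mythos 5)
Your blow-up setup (Step~1 of the paper's argument) is essentially correct: the contradiction assumption, the maximal-radius selection, $r_k\to0_+$ from Lemma~\ref{lem:AU}/\ref{lem:almost_linear}, the rescaled variational inequality with vanishing inhomogeneity (the place where $\beta<\alpha$ enters), and the identification of the limit $v_0=a_0|x_{n+1}|$ with $a_0<0$ via Lemma~\ref{lem:global}(ii) and Lemma~\ref{lem:orthogonal}. Your telescoping bound $|a_{Rr_k}-a_{r_k}|\lesssim M_k (Rr_k)^\beta$ is a clean and valid way to get the growth estimate $\|v_k\|_{\LL(B_R)}\lesssim R^{1+\beta}$; it is arguably simpler than the paper's route in Step~1b, which instead introduces a secondary blow-up $u_k$ and invokes the near-linear growth of the constant coefficient solution to bound $\|\Pr(u_k,1)\|_{\LL(B_1)}\lesssim R^{-1}$.

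However, your final paragraph has a genuine gap. The function $v_0(x)=a_0|x_{n+1}|$ with $a_0<0$ is a \emph{perfectly valid} global solution to the constant coefficient problem: it is nonnegative on the thin manifold and satisfies the Signorini conditions, and $\Gamma_{v_0}=\emptyset$ precisely because $\Omega_{v_0}=\emptyset$. The fact that $0\in\Gamma_{v_k}$ for every $k$ does \emph{not} pass to the limit: the non-coincidence sets $\Omega_{v_k}=\{v_k>0\}\cap B_1'$ may simply shrink to nothing as $k\to\infty$, which is consistent with $v_k\to a_0|x_{n+1}|$ uniformly. There is no ``quantitative non-coincidence statement'' for the thin obstacle problem that survives this degeneration; such a nondegeneracy bound (of the type $\sup_{B_r}w_k\geq c\,r^{3/2}$) holds only at regular points and is unavailable here. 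So after Step~1 one has reached a consistent picture and still has no contradiction.

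What is actually needed is the content of the paper's Steps~2 and~3, which is the most delicate part of the proof. One exploits $0\in\Gamma_{w_k}$ by picking points $x_k\in\Omega_{w_k}$ near the origin and noting that, by interior elliptic regularity, the blow-up at $x_k$ at very small scales is \emph{far} from the cone $\{\gamma|x_{n+1}|\}$ (the distance tends to $1$); on the other hand by Step~1 the blow-up at scale $r_k$ around the origin is \emph{close} (distance $\to 0$). A continuity argument in the scale parameter therefore produces an intermediate pair $(x_k,s_k r_k)$ at which the distance is exactly a fixed small $\mu>0$, and $s_k$ is chosen maximal. The corresponding renormalized limit $\tilde v_0$ then inherits, from the maximality of $s_k$ and an analogue of Lemma~\ref{lem:growth2}, near-linear growth at infinity, and Lemma~\ref{lem:global}(ii) together with a limiting free-boundary point (forced by $B_{s_kr_k}(x_k)\cap\Lambda_{w_k}\neq\emptyset$) pins $\tilde v_0$ down to $b|x_{n+1}|$, contradicting distance $\mu>0$. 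None of this appears in your proposal, and without it the argument does not close.
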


\begin{proof}
We first give an outline of the proof for the convenience of the reader.
Suppose that the statement were wrong.
Then there existed a parameter $\beta \in (0,\alpha)$, a sequence $r_k$ of radii and
a sequence $w_k$ of solutions to the variable coefficient thin obstacle problem with $\|w_k\|_{\tilde{L}^2(B_1)}=1$, $0\in \Gamma_{w_k}$, $a^{ij}_k$ and $g^i_k$ satisfying (N), such that
\begin{align*}
\|w_k - \Pr(w_k,r_k)\|_{\LL(B_{r_k})} \geq k r_k^{1+\beta}.
\end{align*}
Note that by Lemma~\ref{lem:almost_linear} this implies that $r_k \rightarrow 0$. Furthermore, we can choose $r_k>0$ such that
\begin{equation}\label{eq:r_k}
\begin{split}
&\|w_k - \Pr(w_k, r)\|_{\LL(B_r)} \leq k r^{1+\beta} \mbox{ for all } r\in [r_k,3/4],\\
&\|w_k - \Pr(w_k, r_k)\|_{\LL(B_{r_k})} = k r^{1+\beta}_k.
\end{split}
\end{equation}
We consider the normalized error term
\begin{align}\label{eq:v_k}
v_k(x):=\frac{(w_k-\Pr(w_k,r_k))(r_k x)}{\|w_k-\Pr(w_k,r_k)\|_{\tilde{L}^2(B_{r_k})}}
\end{align}
and proceed in three steps to show that our initial contradiction assumption was false.
\\
In Step 1 we show that the functions $v_k$ converge to $v_0$ along a subsequence, where $v_0$ is a global solution to \eqref{eq:limit_equ} with $\|v_0\|_{\tilde{L}^2(B_1)}=1$ and $v_0(x)=a_0 |x_{n+1}|$ for some $a_0<0$. The compactness of $v_k$ (c.f. Step 1a) follows by our choice of $r_k$ and the interior H\"older estimate from Lemma~\ref{lem:proj}. The classification of $v_0$ is proved in Step 1b, where the key is to show that $v_0$ has less than $R^{3/2}$ growth for large $R$.\\
In Steps 2 and 3 we use the fact that $x=0$ is a free boundary point of $w_k$ and the Signorini condition \eqref{eq:sig} to obtain a contradiction. More precisely, in the non-conincidence set $\Omega_{w_k}$ we have $(\p_{\nu_+}+\p_{\nu_-})w_k=0$. This together with Step 1 and an argument of continuity implies that for $\mu>0$ small, there exist $s_k$ and $x_k$ such that the limit of
\begin{align*}
\tilde{v}_k(x):=\frac{(w_k-\Pr(w_k, s_kr_k,x_k))(s_kr_kx+x_k)}{\|w_k-\Pr(w_k,s_kr_k,x_k)\|_{\tilde{L}^2(B_{s_kr_k}(x_k))}}
\end{align*}
has distance $\mu$ to the profile $\gamma|x_{n+1}|$, i.e.
$$\inf_{\gamma\in \R}\|\tilde{v}_0-\gamma|x_{n+1}|\|_{\tilde{L}^2(B_1)}=\mu,\quad \tilde{v}_0=\lim_{k\rightarrow\infty}\tilde{v}_k.$$
 This is however a contradiction, because if $\mu>0$ is sufficiently small, one can apply Lemma~\ref{lem:global} and an orthogonality argument to show that $\tilde{v}_0(x)=b |x_{n+1}|$.  \\

\emph{Step 1: Blow-Up.}
Let $v_k$ be as \eqref{eq:v_k}.
We show that there exists a subsequence $r_k\rightarrow 0$ such that $v_{k}\rightarrow v_0 $ in $L^2_{loc}$,
where $v_0$ is a global solution to the constant coefficient
thin obstacle problem in the sense of the variational inequality (\ref{eq:limit_equ}).
Furthermore, $v_0$ satisfies the growth condition
\begin{align}\label{eq:growth3}
\sup_{B_R}|v_0|\leq C R^{1+\beta}, \quad R\geq 1.
\end{align}
By Lemma~\ref{lem:global} and the orthogonality argument from Lemma \ref{lem:orthogonal}
this then implies that
$$v_0(x)=a_0|x_{n+1}|\text{ for some } a_0<0.$$

\emph{Argument for Step 1:}

\emph{Step 1a: Compactness.}
We note that by the normalization \eqref{eq:r_k}, $v_k$ satisfies $\|v_k\|_{\tilde{L}^2(B_1)}=1$ and $\|v_k\|_{\tilde{L}^2(B_R)}\leq R^{1+\beta}$ for all $R>1$. Furthermore, it solves the variational
inequality
\begin{equation}\label{eq:vari_v_k}
\begin{split}
&\int_{B_{1/r_k}}a^{ij}_k(r_k\cdot)\p_jv_k\p_i(v-v_k) dx
+\int_{B_{1/r_k}} \tilde{g}^i_k \p_i(v-v_k) dx \geq 0,\\
&\text{ for all } v\in \{v\in W^{1,2}(B_{1/r_k}):v\geq 0 \text{ on } B'_{1/r_k},\ v=v_k \text{ on } \p B_{1/r_k}\},
\end{split}
\end{equation}
where
\begin{align*}
\hat{g}^i_k(x)&=\frac{r_kg^i_k(r_kx)}{\|w_k-\Pr(w_k,r_k)\|_{\tilde{L}^2(B_{r_k})}}
+\frac{r_k(a^{i,n+1}_k(r_k x)-\delta^{i,n+1})\p_{n+1}\Pr(w_k,r_k)}{\|w_k-\Pr(w_k,r_k)\|_{\tilde{L}^2(B_{r_k})}}.
\end{align*}
By \eqref{eq:r_k}, the Hölder continuity of $g_k^i$ and $a^{ij}_k$, the normalization condition (N) and by the fact that
$|\p_{n+1}\Pr(w_k,r_k)|\leq C_\epsilon r_k^{-\epsilon}$ (c.f. Lemma~\ref{lem:proj}), we deduce the bound
\begin{align}
\label{eq:bd_rhs0}
\|\hat{g}^i_k\|_{\LL(B_1)} \leq [g^{i}_k]_{C^{0,\alpha}(B_1)} r_k^{\alpha-\beta} k^{-1}
+ C_\epsilon r_k^{\alpha-\beta-\epsilon} k^{-1} [a^{ij}]_{C^{0,\alpha}(B_1)}.
\end{align}
If $\epsilon=\alpha-\beta$ in \eqref{eq:bd_rhs0}, we therefore obtain that the limit of $\hat{g}^{i}_k$ as $k\rightarrow \infty$ is zero and that
the functions $\hat g^i_k$ are uniformly bounded in $C^{0,\alpha}$.
Thus by Lemma~\ref{lem:AU} and Lemma \ref{lem:almost_linear} $\{v_k\}_{k\in\N}$ is uniformly bounded in $W^{1,2}(B_R)$ and in
$C^{0,\beta}(B_R)$ for any $\beta\in (0,1)$ and for any (arbitrary but fixed) $R>0$.
Therefore up to a subsequence, $v_k\rightarrow v_0$ locally uniformly in
$C^{0,\beta}(B_R)$ and weakly in $W^{1,2}(B_R)$ for each fixed $R>1$. Furthermore,
$v_0$ is a solution to the constant coefficient thin obstacle problem, i.e. it
satisfies the variational inequality \eqref{eq:limit_equ}.\\
To derive the latter, we observe that by a density argument it suffices
to show that for each $\eta\in C^\infty_0(B_R)$ with $v_0+\eta\geq 0$ on $B'_R$,
\begin{align}\label{eq:vari_2}
\int_{B_R} \nabla v_0\cdot \nabla \eta dx \geq 0.
\end{align}
In order to prove this, we use that $v_k$ satisfies the variational equality \eqref{eq:vari_v_k} in the following form: For all $R\in (0,1/r_k)$
\begin{equation}\label{eq:vari_v_k_a}
\begin{split}
&\int_{B_{R}}a^{ij}_k(r_k\cdot)\p_jv_k\p_i(v-v_k) dx
+\int_{B_{R}} \hat{g}^i_k \p_i(v-v_k) dx \geq 0,\\
&\text{ for all } v\in \{v\in W^{1,2}(B_{R}):v\geq 0 \text{ on } B'_{R},\ v=v_k \text{ on } \p B_{R}\},
\end{split}
\end{equation}
for which we have used that $v_k$ is a local minimizer in our convex constraint set. An argument as in the proof of Lemma \ref{lem:almost_linear} thus leads to (\ref{eq:vari_2}).\\

\emph{Step 1b: Growth.}
We begin by showing the growth estimate \eqref{eq:growth3}. To this end,
it suffices to produce an analogous growth estimate for $v_k$,
as it carries over to the limit by strong convergence. We write
\begin{align*}
v_k(Rx)&=\frac{(w_k-\Pr(w_k,r_k))(r_k Rx)}{\|w_k-\Pr(w_k,r_k)\|_{\tilde{L}^2(B_{r_k})}}\\
&=\frac{(w_k-\Pr(w_k,r_kR))(r_kRx)}{\|w_k-\Pr(w_k,r_k)\|_{\tilde{L}^2(B_{r_k})}}+\frac{(\Pr(w_k,r_kR)-\Pr(w_k,r_k))(r_kRx)}{\|w_k-\Pr(w_k,r_k)\|_{\tilde{L}^2(B_{r_k})}}.
\end{align*}
Thus,
\begin{align*}
\|v_k\|_{\tilde{L}^2(B_R)}\leq \frac{\|w_k-\Pr(w_k,r_kR)\|_{\tilde{L}^2(B_{r_kR})}}{\|w_k-\Pr(w_k,r_k)\|_{\tilde{L}^2(B_{r_k})}}
+ \frac{\|\Pr(w_k,r_kR)-\Pr(w_k,r_k)\|_{\tilde{L}^2(B_{r_kR})}}{\|w_k-\Pr(w_k,r_k)\|_{\tilde{L}^2(B_{r_k})}}.
\end{align*}
We seek to show that for each $R\geq 1$
\begin{align}
\frac{\|w_k-\Pr(w_k,r_kR)\|_{\tilde{L}^2(B_{r_kR})}}{\|w_k-\Pr(w_k,r_k)\|_{\tilde{L}^2(B_{r_k})}}\leq CR^{1+\beta},\label{eq:claim1}\\
\frac{\|\Pr(w_k,r_kR)-\Pr(w_k,r_k)\|_{\tilde{L}^2(B_{r_kR})}}{\|w_k-\Pr(w_k,r_k)\|_{\tilde{L}^2(B_{r_k})}}\leq CR^{1+\beta}. \label{eq:claim2}
\end{align}
Equations \eqref{eq:claim1} and \eqref{eq:claim2} then imply \eqref{eq:growth3}.\\

To show \eqref{eq:claim1}, for simplicity we
abbreviate $f_k(r):=\|w_k-\Pr(w_k,r)\|_{\tilde{L}^2(B_r)}$.

We recall that by the choice of $r_k$ in \eqref{eq:r_k}, we have
\begin{align*}
f_k(R r_k) \leq k (r_k R)^{1+\beta} \mbox{ and } f_k(r_k) =k r_k^{1+\beta}.
\end{align*}
As a consequence, for $R\geq 1$
\begin{align*}
\frac{f_k(Rr_k)}{f_k(r_k)}\leq \frac{k R^{1+\beta}r_k^{1+\beta}}{f_k(r_k)}=R^{1+\beta}.
\end{align*}
This proves the bound (\ref{eq:claim1}).\\
To show \eqref{eq:claim2}, we rewrite
\begin{equation}
\label{eq:proj_a}
\begin{split}
&\frac{\|\Pr(w_k,r_kR)-\Pr(w_k,r_k)\|_{\tilde{L}^2(B_{r_kR})}}{\|w_k-\Pr(w_k,r_k)\|_{\tilde{L}^2(B_{r_k})}}\\
&=\frac{f_k(Rr_k)}{f_k(r_k)}\frac{\|\Pr(w_k,r_kR)-\Pr(w_k,r_k)\|_{\tilde{L}^2(B_{r_kR})}}{f_k(r_kR)}\\
&=\frac{f_k(Rr_k)}{f_k(r_k)}\frac{R\|(\Pr(w_k,r_kR)-\Pr(w_k,r_k))(r_k\cdot)\|_{\tilde{L}^2(B_1)}}{f_k(r_k R)}\\
& \leq R^{2+\beta} \frac{\|(\Pr(w_k,r_kR)-\Pr(w_k,r_k))(r_k\cdot)\|_{\tilde{L}^2(B_1)}}{f_k(r_k R)}.
\end{split}
\end{equation}
We define (for fixed $1<R<\frac{1}{2r_k}$)
\begin{align*}
u_k(x):=\frac{(w_k-\Pr(w_k,Rr_k))(r_kx)}{f_k(r_kR)}.
\end{align*}
Then $\|u_k\|_{\tilde{L}^2(B_R)}=1$ and we note that
\begin{align*}
\frac{\|(\Pr(w_k,r_kR)-\Pr(w_k,r_k))(r_k\cdot)\|_{\tilde{L}^2(B_1)}}{f_k(r_k R)}=\|\Pr(u_k,1)\|_{\tilde{L}^2(B_1)}.
\end{align*}
To estimate $u_k$, we observe that it is a solution of the thin obstacle problem
\begin{align*}
&\int_{B_{1/r_k}}a^{ij}_k(r_k\cdot)\p_ju_k\p_i(v-u_k) dx
+\int_{B_{1/r_k}} \tilde{g}^i_k \p_i(v-u_k) dx \geq 0,\\
&\text{ for all } v\in \{W^{1,2}(B_{1/r_k}): v\geq 0\text{ on } B'_{1/r_k},\ v=u_k\text{ on } \p B_{1/r_k}\},
\end{align*}
where
\begin{align*}
\tilde{g}^i_k(x)&=\frac{r_kg^i_k(r_kx)}{f(r_k R)}
+\frac{r_k(a^{i,n+1}_k(r_k x)-\delta^{i,n+1})\p_{n+1}\Pr(w_k,r_k R)}{f(r_k R)}.
\end{align*}
By the $C^{0,\alpha}$ regularity of $a^{ij}_k, g^i_k$, assumption (N) and by the observation
that
$R^{\frac{n+1}{2}}f(r_k R)\geq f(r_k)=kr_k^{1+\beta}$ we hence obtain that
\begin{align}
\label{eq:bd_rhs}
\|\tilde{g}^i_k\|_{L^\infty(B_R)}
\leq  \frac{r_k^{\alpha-\beta}R^{\alpha + \frac{n+1}{2}}}{ k }[g^{i}_k]_{C^{0,\alpha}(B_1)}
+ \frac{r_k^{\alpha-\beta}|a_{r_k R}|R^{1+\alpha+\frac{n+1}{2}}}{k}[a^{ij}]_{C^{0,\alpha}(B_1)}.
\end{align}
Here $a_{r_kR}$ is defined by $\Pr(w_k,r_kR)=:a_{r_kR}x_{n+1}$.
By Lemma~\ref{lem:proj}, $|a_{r_k R}|\leq C_\epsilon (r_kR)^{-\epsilon}$
for $R\in (0,1/(2r_k))$.
Thus by choosing $\epsilon=\alpha-\beta$, we infer the uniform (in $k$) boundedness of $\tilde{g}^i_k$ in $B_R$ and the vanishing of the inhomogeneity $\tilde{g}^i_k$ in the limit as $k\rightarrow \infty$.
Arguing similarly one can show that $\tilde{g}^i_k$ is uniformly bounded in $C^{0,\alpha}(B_R)$.
\\
Therefore, Lemma~\ref{lem:AU} and Lemma \ref{lem:almost_linear} imply that
$\|u_k\|_{W^{1,2}(B_{R'})}+\|u_k\|_{C^{0,\beta}(B_{R'})}\leq C_{R,R'}\|u_k\|_{L^2(B_R)}$ for any $R'<R$.
Hence, up to a subsequence, $u_k\rightharpoonup u_0$ in $W^{1,2}(B_{R'})$, $u_k\rightarrow u_0$ uniformly in $B_{R'}$ and
$u_k \rightharpoonup u_0$ weakly in $L^2(B_{R})$.
Furthermore, by a similar argument as in Step 1a,
$u_0$ is a solution to the constant coefficient thin obstacle problem in $B_R$, i.e. it satisfies the variational inequality
\begin{align*}
&\int_{B_R}\nabla u_0\cdot \nabla (v-u_0) dx\geq 0,\\
&\text{ for all } v\in \{v\in W^{1,2}(B_R):v\geq 0 \text{ on } B'_R,\ v=u_0\text{ on } \p B_R\}.
\end{align*}
The uniform convergence and the normalization $u_k(0)=0$ for all $k$
imply that $u_0(0)=0$.
Using the known
linear growth of the constant coefficient solution $u_0$ at $ \Lambda_{u_0}$
(c.f. \cite{PSU})
and noting that
$\|u_0(R \cdot)\|_{\tilde{L}^2(B_1)}=\|u_0\|_{\tilde{L}^2(B_R)}\leq \liminf\limits_{k\rightarrow \infty}\|u_k\|_{\tilde{L}^2(B_R)}=1$, we obtain that
$$\|u_0\|_{\tilde{L}^2(B_1)}=\|u_0(R\cdot)\|_{\tilde{L}^2(B_{1/R})}\lesssim R^{-1}.$$
By the strong $L^2$ convergence in $B_1$, $\|u_k\|_{\tilde{L}^2(B_1)}\lesssim R^{-1}$ for $k$ sufficiently large.
Thus, $\|\Pr(u_k,1)\|_{\tilde{L}^2(B_1)}\leq \|u_k\|_{\tilde{L}^2(B_1)}\lesssim R^{-1}$.
This together with \eqref{eq:claim1} leads to
\begin{align*}
\frac{\|\Pr(w_k,r_kR)-\Pr(w_k,r_k)\|_{\tilde{L}^2(B_{r_kR})}}{\|w_k-\Pr(w_k,r_k)\|_{\tilde{L}^2(B_{r_k})}}
\lesssim R^{1+\beta}
\end{align*}
for sufficiently large values of $k$. Thus we have shown \eqref{eq:claim2}.\\

\emph{Step 2: A continuity argument.} Let $w_{k}$ and $r_k$ be as in Step 1.
We seek to prove that for any sufficiently small $\mu\in (0,1/2)$, there exist $x_k\in B'_1$ and $s_k\in (0,1)$ such that
\begin{align*}
\inf_{\gamma\in \R}\frac{\left\|(w_k-\Pr(w_k, s_kr_k, x_k))(s_kr_kx+x_k)-\gamma |x_{n+1}|\right\|_{\tilde{L}^2(B_1^+)}}{\|w_k-\Pr(w_k,s_kr_k,x_k)\|_{\tilde{L}^2(B_{s_kr_k}(x_k))}}=\mu.
\end{align*}

Indeed, since $0\in \Gamma_{w_k}$ is a free boundary point and $w_k$ is continuous, there exist points $x_k$ with
$x_k/r_k\rightarrow 0$ as $k\rightarrow \infty$ such that $w_k(x_k)>0$.
Let $\delta_k:=\dist(x_k,\Lambda_{w_k})\in (0,1)$. Then it is possible to find $\delta_k^\ell\in (0,\delta_k)$ with
$\delta_k^\ell\rightarrow 0$ as $\ell\rightarrow \infty$ such that
$w_k(r_k\delta^\ell_kx+x_k)>0$ for each $k, \ell$. We consider the blow-up limit
\begin{align*}
\tilde{u}_k(x):=\lim_{\ell\rightarrow \infty}\frac{(w_k-\Pr(w_k, r_k\delta_k^\ell, x_k))(r_k\delta_k^\ell x+ x_k)}{\|w_k-\Pr(w_k,r_k\delta_k^\ell,x_k)\|_{\tilde{L}^2(B_{r_k\delta_k^\ell}(x_k))}}.
\end{align*}
Using that $\tilde{u}_{k,\ell}:=\frac{(w_k-\Pr(w_k, r_k\delta_k^\ell, x_k))(r_k\delta_k^\ell x+ x_k)}{\|w_k-\Pr(w_k,r_k\delta_k^\ell,x_k)\|_{\tilde{L}^2(B_{r_k\delta_k^\ell}(x_k))}}$
solves the equation
\begin{align*}
\p_ia^{ij}(r_k \delta_k^{\ell}\cdot + x_k)\p_j\tilde{u}_{k,\ell}= \p_i \tilde{g}_{k,\ell}^i \text{ in } B_1,
\end{align*}
with
\begin{align*}
\tilde{g}_{k,\ell}^i(x) &:= \frac{r_k \delta_k^{\ell} g_k^i(r_k \delta_k^{\ell}\cdot + x_k)}{\|w_k-\Pr(w_k,r_k\delta_k^\ell,x_k)\|_{\tilde{L}^2(B_{r_k\delta_k^\ell}(x_k))}}
 \\
 &\quad-\frac{r_k \delta_k^{\ell} (a^{n+1,j}(r_k \delta_k^{\ell} \cdot + x_k) - \delta^{n+1,j})\p_{n+1}\Pr(w_k,r_k\delta_k^\ell,x_k)}{\|w_k-\Pr(w_k,r_k\delta_k^\ell,x_k)\|_{\tilde{L}^2(B_{r_k\delta_k^\ell}(x_k))}},
\end{align*}
we infer that $\tilde{u}_k$ solves
\begin{align*}
 \p_i a^{ij}(x_k) \p_j \tilde{u}_k = 0 \mbox{ in } B_1.
\end{align*}
Here we have used that
$\p_{\nu_+}\tilde{u}_k+\p_{\nu_-}\tilde{u}_k=0$ on $B'_1$ and
that by construction for fixed $k\in \N$
$$\|w_k-\Pr(w_k,r_k\delta_k^\ell,x_k)\|_{\tilde{L}^2(B_{r_k\delta_k^\ell}(x_k))}>0 \mbox{ uniformly in } \ell, $$
for which we recall that $x_k$ was chosen such that $w_k(x_k)>0$.
Thus, by interior regularity for solutions to elliptic equations
\begin{equation}
\label{eq:cont_1}
\begin{split}
\lim_{\ell\rightarrow \infty}\inf_{\gamma}\frac{\left\|(w_k-\Pr(w_k, r_k\delta_k^\ell, x_k))(r_k\delta_k^\ell x+ x_k)-\gamma |x_{n+1}|\right\|_{\tilde{L}^2(B_1)}}{\|w_k-\Pr(w_k, r_k\delta_k^\ell, x_k)\|_{\tilde{L}^2(B_{r_k\delta_k^\ell}(x_k))}}=1.
\end{split}
\end{equation}
By Step 1 we however know that
\begin{align*}
\lim_{k\rightarrow \infty}\inf_{\gamma}\frac{\left\|(w_k-\Pr(w_k, r_k, 0))(r_k x)-\gamma |x_{n+1}|\right\|_{\tilde{L}^2(B_1)}}{\|w_k-\Pr(w_k, r_k, 0)\|_{\tilde{L}^2(B_{r_k}(0))}}=0.
\end{align*}
Choosing $x_k \in \Omega_{w_k}$ in (\ref{eq:cont_1}) sufficiently small, e.g. such that $|x_k|^{\tilde{\alpha}}\leq \frac{1}{k} r^{1+\beta}_k$ with $\tilde{\alpha}\in(0,1)$, thus also implies that
\begin{align*}
\lim_{k\rightarrow \infty}\inf_{\gamma}\frac{\left\|(w_k-\Pr(w_k, r_k, x_k))(r_k x + x_k)-\gamma |x_{n+1}|\right\|_{\tilde{L}^2(B_1)}}{\|w_k-\Pr(w_k, r_k, x_k)\|_{\tilde{L}^2(B_{r_k}(x_k))}}=0.
\end{align*}
To observe this, we have used (\ref{eq:r_k}) for $r=r_k$ in combination with
\begin{align*}
\|\Pr(w_k, r_k, x_k)-\Pr(w_k,r_k,0)\|_{\LL(B_{r_k}(x_k))}\leq |x_k|^{\tilde{\alpha}} \leq \frac{1}{k}r_k^{1+\beta},
\end{align*}
which is a consequence of the Hölder continuity of $w_k$ (c.f. Lemma \ref{lem:almost_linear}).
Noticing that for fixed $k\in \N$ the function
\begin{align*}
[0,1] \ni s \mapsto \inf_{\gamma}\frac{\left\|(w_k-\Pr(w_k, sr_k, x_k))(sr_k x + x_k)-\gamma |x_{n+1}|\right\|_{\tilde{L}^2(B_1)}}{\|w_k-\Pr(w_k, s r_k, x_k)\|_{\tilde{L}^2(B_{s r_k}(x_k))}} \in \R,
\end{align*}
is continuous, we infer that for any $\mu\in (0,1/2)$ small, there exists $k_\mu$ sufficiently large, such that for each $k\geq k_\mu$ we can choose $s_k\in (0,1)$ and $x_k$ as claimed.  \\

\emph{Step 3: Conclusion.} Let
\begin{align*}
\tilde{v}_k(x):=\frac{(w_k-\Pr(w_k, s_kr_k,x_k))(s_kr_kx+x_k)}{\|w_k-\Pr(w_k,s_kr_k,x_k)\|_{\tilde{L}^2(B_{s_kr_k}(x_k))}}
\end{align*}
be as in Step 2. We can choose $s_k$ to be the maximal value such that for each $s\geq s_k$
\begin{align*}
\inf_{\gamma}\frac{\left\|(w_k-\Pr(w_k, s r_k,x_k))(r_k s x+ x_k)-\gamma |x_{n+1}|\right\|_{\tilde{L}^2(B_1)}}{\|w_k-\Pr(w_k, s r_k,x_k)\|_{\tilde{L}^2(B_{r_k s}(x_k))}}\leq \mu.
\end{align*}
\emph{Step 3a: Compactness.}
We claim that for an appropriate choice of $x_k \in \Omega_{w_k}$ it holds that $s_k\rightarrow 0$. Indeed, assume that this were not the case.
Then along a subsequence $s_k\rightarrow s_0>0$. We rewrite
\begin{equation}
\label{eq:rel}
\begin{split}
\tilde{v}_k(x)&
=v_k\left(s_kx+\frac{x_k}{r_k}\right)\cdot \frac{\|w_k-\Pr(w_k,r_k)\|_{\tilde{L}^2(B_{r_k})}}{\|w_k-\Pr(w_k,s_kr_k,x_k)\|_{\tilde{L}^2(B_{s_kr_k}(x_k))}}\\
& \quad - \frac{(\Pr(w_k, s_k r_k, x_k) - \Pr(w_k,r_k))(s_k r_k x + x_k)}
{\|w_k - \Pr(w_k,r_k s_k)\|_{\LL(B_{s_kr_k}(x_k))}}
.
\end{split}
\end{equation}
We consider the two summands on the right hand side of (\ref{eq:rel})
separately and first concentrate on the first term:
By Step 1, $v_k\rightarrow v_0$ strongly in $L^2(B_{3/2})$,
where $v_0(x)=\gamma|x_{n+1}|$ for some $\gamma<0$ (which is such that
$\|v_0\|_{\tilde{L}^2(B_1)}=1$).
Using this and orthogonality we deduce that
\begin{align*}
\|v_k-\Pr(v_k,s_k)\|_{L^2(B_{s_k})}&=\frac{\|w_k-\Pr(w_k,s_kr_k)\|_{\tilde{L}^2(B_{r_ks_k})}}{\|w_k-\Pr(w_k,r_k)\|_{\tilde{L}^2(B_{r_k})}}\\
&\rightarrow \|v_0\|_{\tilde{L}^2(B_{s_0})}=c_n|\gamma| s_0.
\end{align*}
Arguing as in Step 2, if the points $x_k$ are chosen such that $|x_k|$
are sufficiently small, then
\begin{align*}
\frac{\|w_k-\Pr(w_k,r_k)\|_{\tilde{L}^2(B_{r_k})}}{\|w_k-\Pr(w_k,s_kr_k,x_k)\|_{\tilde{L}^2(B_{s_kr_k}(x_k))}}\leq \frac{C}{c_n|\gamma|s_0}<\infty.
\end{align*}
In particular, along a subsequence, this term converges.
Thus, passing to the limit in the first term in (\ref{eq:rel}) yields
\begin{align*}
v_k\left(s_kx+\frac{x_k}{r_k}\right)\cdot \frac{\|w_k-\Pr(w_k,r_k)\|_{\tilde{L}^2(B_{r_k})}}{\|w_k-\Pr(w_k,s_kr_k,x_k)\|_{\tilde{L}^2(B_{s_kr_k}(x_k))}}\rightarrow C s_0|x_{n+1}| \text{ in } L^2(B_1)
\end{align*}
for some $C<\infty$. \\
We investigate the second term in (\ref{eq:rel}) and claim that for an appropriate choice of
$x_k \in \Omega_{w_k}$ it vanishes in the limit. Indeed,
\begin{align*}
&  \frac{(\Pr(w_k, s_k r_k, x_k) - \Pr(w_k,r_k))(s_k r_k x + x_k)}
{\|w_k - \Pr(w_k,r_k s_k)\|_{\LL(B_{s_kr_k}(x_k))}} \\
&= \frac{(\Pr(w_k, s_k r_k, x_k) - \Pr(w_k, r_k))(s_k r_k x + x_k)}
{\|w_k - \Pr(w_k,r_k)\|_{\LL(B_{r_k})}} \frac{\|w_k - \Pr(w_k, r_k)\|_{\LL(B_{r_k})}}
{\|w_k - \Pr(w_k, s_k r_k, x_k)\|_{\LL(B_{s_k r_k}(x_k))}}.
\end{align*}
The second factor is of the same structure as the factor from the first term, which was discussed above
and shown to be bounded.
Hence we only
study the first contribution. By a suitable (sufficiently small) choice of $x_k$ we infer that
\begin{align*}
 \frac{(\Pr(w_k, s_k r_k, x_k) - \Pr(w_k, r_k))(s_k r_k x + x_k)}
{\|w_k - \Pr(w_k,r_k)\|_{\LL(B_{r_k})}} \sim \Pr(v_k, s_k r_k)(s_k r_k x),
\end{align*}
which vanishes by strong convergence and orthogonality in the limit $k \rightarrow \infty$.
Thus, in total, we infer that
\begin{align*}
 \tilde{v}_k(x) \rightarrow C s_0 |x_{n+1}| \mbox{ in } L^2(B_1)
 \mbox{ as } k \rightarrow \infty .
\end{align*}
This however contradicts the observation that
\begin{align*}
\inf_{\gamma\in\R}\|\tilde{v}_k-\gamma|x_{n+1}|\|_{\tilde{L}^2(B_1)}=\mu\rightarrow 0,
\end{align*}
which concludes the argument for Step 3a.\\

\emph{Step 3b: Conclusion.}
With this choice of $s_k$, up to a subsequence,
$\tilde{v}_k\rightarrow \tilde{v}_0$ in $L^2(B_1)$,
where the strong $L^2(B_1)$ convergence and $\Pr(\tilde{v}_k,1)=0$ entail that $\Pr(\tilde{v}_0,1)=0$.
Indeed, due to the choice of $s_k$ and arguing similarly as in Lemma~\ref{lem:growth2}, we deduce that $\tilde{v}_k$ is a solution to the thin obstacle problem in $B_2$ with $\|\tilde{v}_k\|_{\tilde{L}^2(B_2)}$ being uniformly bounded in $k$. Moreover, by Step 2 the inhomogeneities are uniformly bounded as well. Thus by Lemma~\ref{lem:AU} we have that $\tilde{v}_k$ is uniformly bounded in $W^{1,2}(B_1)$ and $C^{0,\beta}(B_1)$. This yields the strong $L^2$ convergence along a subsequence. \\
Arguing as in Step 1a (i.e. as in the limiting argument relying on the variational inequality for $\tilde{v}_k$), we obtain that $\tilde{v}_0$ is a global solution to the thin obstacle problem for the Laplacian (in the sense of (\ref{eq:limit_equ})). We seek to show that $\tilde{v}_0(x)=a|x_{n+1}|$ for some constant $a$. Indeed, from the strong convergence and the choice of $s_k$ from above, we infer
\begin{align}\label{eq:contra2}
\inf_{\gamma\in \R}\frac{\left\|\tilde{v}_0-\gamma|x_{n+1}|\right\|_{\tilde{L}^2(B_1)}}{\|\tilde{v}_0\|_{\tilde{L}^2(B_1)}}=\mu\in (0,1/2).
\end{align}
Moreover, by the maximality of $s_k$ and by the observation that $s_k \rightarrow 0$, for each $R\geq 1$
\begin{align*}
\inf_{\gamma}\frac{\left\|\tilde{v}_0-\gamma|x_{n+1}|\right\|_{\tilde{L}^2(B_R)}}{\|\tilde{v}_0\|_{\tilde{L}^2(B_R)}}\leq \mu.
\end{align*}
By an analogue of Lemma~\ref{lem:growth2}, given any $\epsilon>0$, if $\mu=\mu_\epsilon>0$ is sufficiently small we therefore deduce that
\begin{align*}
\lim_{R\rightarrow \infty}\frac{\ln \|\tilde{v}_0\|_{\tilde{L}^2(B_R)}}{\ln R}\leq 1+\epsilon.
\end{align*}
Hence Lemma~\ref{lem:global} in combination with $\Pr(\tilde{v}_0,1)=0$ implies that
$\tilde{v}_0(x)=c+a|x_{n+1}|$ for some constants $a, c \in \R$. We claim that $c=0$. To this end, we note that $\tilde{v}_k((\hat x_k-x_k)/(s_kr_k))=0$, where $\hat x_k\in \Gamma_{w_k}$ is a free boundary point, which realizes the distance $\delta_k=\dist(x_k, \Gamma_{w_k})$. Moreover, we observe that by Step 2, $s_k$ has to satisfy $r_ks_k>\delta_k$ (i.e. $B_{r_ks_k}(x_k)\cap \Lambda_{w_k}\neq \emptyset$), which implies that $(\hat x_k-x_k)/(r_ks_k)\in B'_{1}$. Thus up to a subsequence $(\hat x_k-x_k)/(r_ks_k)\rightarrow x_0\in \overline{B'_1}$, where $\tilde{v}_0(x_0)=0$ by the uniform convergence of $\tilde{v}_k$ to $\tilde{v}_0$ in $B_1$. Thus $\tilde{v}_0(x)=a|x_{n+1}|$.
This form of $\tilde{v}_0$ however contradicts \eqref{eq:contra2}.
\end{proof}

Applying Lemma \ref{lem:growth_a} at each free boundary point and combining it with the corresponding
interior elliptic estimates, we obtain the almost optimal H\"older regularity of the solution:

\begin{prop}[Almost optimal Hölder regularity]
\label{prop:Hoelder}
Let $w$ be a solution to the variable coefficient thin obstacle problem in $B_1$ with
$a^{ij}, g^i\in C^{0,\alpha}(B_1)$ satisfying the normalization condition (N).
Then, for all $\beta \in (0,\min\{\alpha,1/2\})$ there exists a constant
$C=C(\beta, \alpha, \|a^{ij}\|_{C^{0,\alpha}}, \|g^i\|_{C^{0,\alpha}})$ such that
\begin{align*}
\|w\|_{C^{1,\beta}(B_{1/2})}\leq C \|w\|_{\LL(B_1)}.
\end{align*}
\end{prop}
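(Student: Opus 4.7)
The strategy is standard: convert the $L^2$-Campanato decay at free-boundary points from Lemma \ref{lem:growth_a} into pointwise $C^{1,\beta}$ control, and patch this with interior Schauder estimates away from $\Gamma_w$.

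For each $x_0 \in \Gamma_w \cap B_{1/2}$, translating and applying Lemma \ref{lem:growth_a} gives
\begin{equation*}
\|w - \Pr(w,r,x_0)\|_{\LL(B_r(x_0))} \leq C\, r^{1+\beta}, \quad r \in (0,\tfrac{1}{4}),
\end{equation*}
with $C$ controlled by $\|w\|_{\LL(B_1)}$ and the coefficient data. This is the $L^2$-Campanato condition for pointwise $C^{1,\beta}$-differentiability at $x_0$, with linear tangent $\Pr(w,r,x_0) = a_r(x_0)\,x_{n+1}$; comparing two overlapping Campanato balls at nearby $x_0, x_0' \in \Gamma_w$ shows that the limiting slope $a_0(x_0) := \lim_{r\downarrow 0} a_r(x_0)$ depends $\beta$-Hölder-continuously on $x_0$.

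Away from $\Lambda_w$, the function $w$ solves the uniformly elliptic equation \eqref{eq:off_fb} in $B_1^\pm$, and on $B_1' \setminus \Lambda_w$ satisfies the homogeneous co-normal jump $\p_{\nu_+}w + \p_{\nu_-}w = 0$; under (N), this reduces to $\p_{n+1}w$ being continuous across $B_1' \setminus \Lambda_w$, so even reflection in $x_{n+1}$ yields a function satisfying a uniformly elliptic equation with $C^{0,\alpha}$ data. For $y \in B_{1/2}^\pm$ with $d(y) := \dist(y,\Gamma_w)$, classical interior Schauder theory therefore applies on $B_{d(y)/2}(y)$ to $w - \Pr(w, 2d(y), x_0)$, where $x_0 \in \Gamma_w$ is nearest to $y$. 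Using the first step to bound the $L^2$ norm (and, via Lemma \ref{lem:almost_linear}, the $L^\infty$ norm) of this shifted function on $B_{2d(y)}(x_0)$ by $C\,d(y)^{1+\beta}$, one obtains after rescaling
\begin{equation*}
d(y)^{\beta}\,[\nabla w]_{C^{0,\beta}(B_{d(y)/4}(y))} + \|\nabla w - a_{2d(y)}(x_0)\,e_{n+1}\|_{L^\infty(B_{d(y)/4}(y))} \leq C\,d(y)^{\beta}.
\end{equation*}

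A standard two-point dichotomy argument, splitting $y_1, y_2 \in B_{1/2}^\pm$ according to whether $|y_1 - y_2|$ is smaller or larger than $\tfrac{1}{2}\min\{d(y_1),d(y_2)\}$ and using the $\beta$-Hölder continuity of $x_0 \mapsto a_0(x_0)$ in the large-distance case, then promotes the above local estimate to the global bound $\|w\|_{C^{1,\beta}(B_{1/2}^\pm)} \leq C\|w\|_{\LL(B_1)}$. The main technical obstacle is the careful bookkeeping of scales between the free-boundary Campanato decay and the interior Schauder rescaling, together with the $L^2$-to-$L^\infty$ upgrade; both are essentially routine once Lemma \ref{lem:growth_a} and Lemma \ref{lem:almost_linear} are in place.
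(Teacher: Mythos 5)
Your proposal follows the paper's proof: Campanato decay at free-boundary points from Lemma~\ref{lem:growth_a}, existence of the limiting slope $a_{x_0}$ via a telescope argument, and rescaled elliptic estimates away from $\Gamma_w$ (the paper delegates this patching step to Prop.~4.22 of \cite{KRS14}). Two imprecisions in your Schauder step are worth correcting. First, Lemma~\ref{lem:almost_linear} gives only $\sup_{B_r(x_0)}|w|\lesssim r^\beta$, which is not the $O(d(y)^{1+\beta})$ sup-norm bound on $w-\Pr(w,2d(y),x_0)$ your rescaling requires; that $L^2$-to-$L^\infty$ upgrade instead comes from a De Giorgi/local-boundedness estimate applied to the shifted function, which solves a divergence-form problem in $B_{2d(y)}(x_0)^\pm$ with either zero Dirichlet data on $\Lambda_w$ (since $\Pr$ vanishes on $B_1'$) or co-normal continuity on $\Omega_w$, combined with the $\LL$ Campanato decay. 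Second, the even-reflection remark is unnecessary and, as stated, incorrect for the \emph{interior} problem: $\p_{n+1}w$ is generally nonzero on $\Omega_w$, so even reflection breaks $C^1$ continuity there. What is true and sufficient is that under (N) the vanishing of $\p_{\nu_+}w+\p_{\nu_-}w$ on $\Omega_w$ means $w$ satisfies $\p_i a^{ij}\p_j w=-\p_i g^i$ distributionally across $B_1'\cap\Omega_w$, so interior Schauder applies there with no reflection at all.
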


\begin{proof}
By Lemma~\ref{lem:growth_a}, for any $\beta \in (0,\min\{\alpha,1/2\})$ there exists a constant
$C=C(\beta, \alpha, \|a^{ij}\|_{C^{0,\alpha}}, \|g^i\|_{C^{0,\alpha}})$ such that
 \begin{align*}
\|w-\Pr(w,r,x_0)\|_{\tilde{L}^2(B_r(x_0))} \leq C r^{1+\beta}\|w\|_{\LL(B_1)}\text{ for all }x_0 \in \Gamma_w \cap B_{1/2}'.
 \end{align*}
We set $\Pr(w,r,x_0)=:a_{x_0}(r)x_{n+1}$. By the triangle inequality and a telescope argument (see Corollary~\ref{cor:negative} for the details of a similar argument) we have for any $0<s<r<1/2$ and $x_0\in \Gamma_w\cap B_{1/2}$,
\begin{align*}
|a_{x_0}(r)-a_{x_0}(s)|\leq C r^{\beta}\|w\|_{\LL(B_1)}, \quad C=C(\beta, \alpha, \|a^{ij}\|_{C^{0,\alpha}}, \|g^i\|_{C^{0,\alpha}}).
\end{align*}
Thus the limit $\lim_{s\rightarrow 0_+} a_{x_0}(s)=:a_{x_0}$ exists, and satisfies $|a_{x_0}|\leq C$, where the constant $C$ depends on the same quantities as above. Thus we have shown that at each $x_0\in \Gamma_w\cap B_{1/2}$ there exists a linear function $\ell_{x_0}(x)=a_{x_0}x_{n+1}$ such that
$\|w-\ell_{x_0}\|_{\tilde{L}^2(B_r(x_0))}\leq Cr^{1+\beta}$. This combined with the standard elliptic estimates for the Dirichlet and Neumann problems, which are valid away from $\Gamma_w$, yields the almost optimal regularity result (see e.g. Proposition 4.22 in \cite{KRS14}).
\end{proof}

\section{Optimal Regularity}\label{sec:opt_reg}
In this section we seek to improve the almost optimal regularity result from Proposition
\ref{prop:Hoelder} to an \emph{optimal} regularity result (c.f. Theorem \ref{thm:Hoelder_opt}). To this end, we assume that
$w$ is a solution to the variable coefficient thin obstacle problem, for which the
normalization condition (N) for the $C^{0,\alpha}$ metric $a^{ij}$ and
the $C^{0,\alpha}$ inhomogeneity
$g^i$ with $\alpha\in (0,1)$ is satisfied.
Further relying on Proposition~\ref{prop:Hoelder}, we suppose that for all $\epsilon\in(0,1)$
and for all $x_0 \in \Gamma_w\cap B_{1/2}$, there exist a linear function
$\ell_{x_0}(x)=a_{x_0}x_{n+1}$ with $a_{x_0}\in \R$ and a constant
$C=C(\epsilon, \|a^{ij}\|_{C^{0,\alpha}(B_1)}, \|g^i\|_{C^{0,\alpha}(B_1)})>0$ such that
the following estimate holds true:
\begin{align*}
\|w-a_{x_0}x_{n+1}\|_{\tilde{L}^2(B_r(x_0))}\leq Cr^{1+\min\{\alpha,\frac{1}{2}\}-\epsilon}\|w\|_{\tilde{L}^2(B_1)} \text{ for all } r\in (0,1/2).
\end{align*}
In order to deduce the desired optimal regularity result, it suffices to prove
the existence of a constant $C=C(\|a^{ij}\|_{C^{0,\alpha}(B_1)}, \|g^i\|_{C^{0,\alpha}(B_1)})$ (independent of $\epsilon$) such that for all $x_0\in \Gamma_w\cap B_{1/2}$ and $r\in (0,1/2)$,
\begin{align*}
\|w-a_{x_0}x_{n+1}\|_{\tilde{L}^2(B_r(x_0))}\leq Cr^{1+\min\{\alpha,\frac{1}{2}\}}\|w\|_{\tilde{L}^2(B_1)}.
\end{align*}
As in Proposition \ref{prop:Hoelder} the combination of this with interior estimates
then implies the desired $C^{1,\min\{\alpha, 1/2\}}$ regularity of the corresponding
solutions.\\

In the following we will show this estimate for $x_0=0\in \Gamma_w$. The arguments at the other free boundary points are similar. \\

We begin by recalling the weak formulation of the equation satisfied by $\tilde{w}(x)=w(x)-a_0x_{n+1}$ in the upper and lower half balls:
It is a divergence form equation
\begin{align*}
\Delta \tilde{w}= \p_i G^i \text{ in } B_1^\pm,
\end{align*}
where
\begin{align}\label{eq:G_i}
G^i=\sum_{j}(\delta^{ij}-a^{ij})\p_j\tilde{w}+g^i+a_0 (\delta^{i,n+1}-a^{i,n+1}).
\end{align}
Using that $a^{ij}, g^i\in C^{0,\alpha}$, the assumption (N) and
Proposition~\ref{prop:Hoelder} (which gives the boundedness of $a_0$),
we in particular deduce that
\begin{equation}\label{eq:G_i2}
|G^i(x)|=|G^i(x)-G^i(0)|\leq C\left([g^i]_{{C}^{0,\alpha}}+[a^{ij}]_{{C}^{0,\alpha}}\|w\|_{\tilde{L}^2(B_1)}\right)|x|^\alpha,\quad x\in B_{1/2},
\end{equation}
where $C=C( \|a^{ij}\|_{C^{0,\alpha}}, \|g^i\|_{C^{0,\alpha}}, n)$.\\
In the full ball $\tilde{w}$ therefore solves the interior thin obstacle problem,
whose weak formulation reads
\begin{align*}
\int_{B_1}\nabla \tilde{w}\cdot \nabla \phi dx =\int_{B_1}G^i\p_i\phi dx
+ \int_{B'_1}(\p_{\nu_+}\tilde{w}+\p_{\nu_-}\tilde{w})\phi dx' \text{ for all } \phi\in C^\infty_0(B_1),
\end{align*}
where $\p_{\nu_+}+\p_{\nu_-}$ is defined as in \eqref{eq:normal}.
We note that by \eqref{eq:normal} and our definition of $\tilde w$,
$\p_{\nu^+}\tilde{w}(x)+\p_{\nu_-}\tilde{w}(x)=\p_{\nu_+}w(x)+\p_{\nu_-}w(x)$
for $x\in B'_1$. Furthermore, by the up to $B'_{1/2}$ regularity
from Proposition~\ref{prop:Hoelder},
$\p_{\nu_+}w(x)+\p_{\nu_-}w(x)\in C^{0,\min\{1/2,\alpha\}-\epsilon}(B^\pm_{1/2})$.
Thus, the complementary boundary conditions are satisfied in a pointwise sense and they turn into
\begin{align*}
\p_{\nu_+}w(x)+\p_{\nu_-}w(x)&
\left\{
\begin{array}{ll}
&=0 \text{ if } x\in B'_1\cap \{w>0\},\\
&\geq 0 \text{ if } x\in B'_1\cap \{w=0\}.
\end{array}
\right.
\end{align*}
Therefore, we can also interpret $\Delta \tilde{w} \in H^{-1}$ as
\begin{align}\label{eq:main2}
\Delta \tilde{w}= \p_iG^i-(\p_{\nu_+}w+\p_{\nu_-}w)\mathcal{H}^{n}\lfloor (B'_1\cap \Lambda_w) \text{ in } B_1.
\end{align}
For notational simplicity we will assume that $a_0=0$ and thus $\tilde{w}=w$. Furthermore, we assume that $\|w\|_{\tilde{L}^2(B_1)}=1$.\\

The remainder of this section is organized as follows: In Section \ref{subsec:epi} we introduce the Weiss energy, which serves as a central tool in deriving compactness in our optimal regularity proof. Here we exploit the epiperimetric inequality in the form of \cite{FS16}, in order to derive an explicit decay rate for the Weiss energy. Using the full strength of the estimate from \cite{FS16}, it is possible to implement this strategy even for the situation of only Hölder continuous metrics by perturbative arguments (c.f. Corollary \ref{cor:perturb_epi}). In Section \ref{subsec:opt} we heavily use the decay of the Weiss energy to carry out our compactness argument (c.f. Lemma \ref{lem:negative}). This ultimately leads to the central growth estimates and hence to the optimal regularity result.

\subsection{The Weiss energy and the epiperimetric inequality}\label{subsec:epi}
In the sequel, we consider the Weiss energy
\begin{align*}
W(r,w)=W_{3/2}(r,w):=\frac{1}{r^{n+2}}\int_{B_r}|\nabla w|^2 dx
-\frac{3}{2}\frac{1}{r^{n+3}}\int_{\p B_r} w^2 \dH,\quad r\in (0,1).
\end{align*}
It is easy to verify that
\begin{equation}\label{eq:scaling_Weiss}
W(r,w)=W(1,w_r),\quad w_r(x):=\frac{w(rx)}{r^{3/2}},  \ r\in (0,1).
\end{equation}
It is known (c.f. Theorem 9.24 in \cite{PSU}) that if $u$ is a solution to the thin
obstacle problem with $a^{ij}=\delta^{ij}$ and $g^i=0$, then for any real number
$\kappa\geq 0$,
$$r\mapsto W_\kappa(r,u):=\frac{1}{r^{n-1+2\kappa}}\int_{B_r}|\nabla u|^2 dx
-\frac{\kappa}{r^{n+2\kappa}}\int_{\p B_r} u^2 \dH $$ is nondecreasing
on $(0,1)$.  Moreover,  $W_\kappa(\cdot,u)$ is constant, iff $u$ is a homogeneous solution of
degree $\kappa$.
In the \emph{variable} coefficient case we do not try to mimic this monotonicity,
but will instead use the Weiss energy as a
tool that yields compactness and controls the ``vanishing order'' of our solution.\\

We introduce the space $\mathcal{E}$, which is defined as the set of the three
halves blow-up solutions
(modulo rotation and scaling), which in combination with the resulting projection operator
will play a central role in this section:

\begin{defi}
\label{defi:proj_3/2}
Let
$$\mathcal{E}:=\{c \Ree((x'\cdot \xi)+i |x_{n+1}|)^{3/2}: c\geq 0,\ \xi\in \R^n,\ |\xi|=1\}.$$
For $w\in L^2(\p B_r)$ we consider the $L^2$ projection of $w$ onto $\mathcal{E}$,
i.e. we define $\overline{\Pr}(w,r)\in \mathcal{E}$ such that
$$\int_{\p B_r(0)}|w-\overline{\Pr}(w,r)|^2 \dH
=\min_{p\in \mathcal{E}}\int_{\p B_r(0)}|w-p|^2 \dH .$$
More generally, we set
\begin{align*}
\overline{\Pr}(w,r,x_0):= \argmin\limits_{p\in \mathcal{E}}\left\{ \int_{\p B_r(0)}|w(\cdot + x_0)-p(\cdot)|^2 \dH \right\}.
\end{align*}
We further use the notation
\begin{align*}
 h_{3/2}(x):= \Ree(x_n + i |x_{n+1}|)^{3/2}
\end{align*}
to denote the model solution.
\end{defi}

\begin{rmk}
\label{rmk:proj}
We emphasize that in contrast to the analogous projection onto $\mathcal{P}$ from Definition \ref{defi:proj}, in Definition \ref{defi:proj_3/2} we now define our projection with respect to the $L^{2}(\p B_r)$ instead of the $L^2(B_r)$ topology. This is due to our compactness argument, which relies on the Weiss energy. As the Weiss energy contains a \emph{boundary normalized term}, boundary projections are better suited in this framework. In Corollary \ref{cor:negative} we will however pass from information on the boundary of $B_r$ to information on the full solid ball.
\end{rmk}

With the Weiss energy and the space $\mathcal{E}$ at hand, we recall the statement of the epiperimetric inequality, which
has been derived in \cite{GPSVG15}, \cite{FS16}. We heavily
rely on it (in the formulation of \cite{FS16}) in deducing quantitative control over the Weiss energy
(c.f. Lemma \ref{cor:perturb_epi}).

\begin{thm}[Theorem 3.1 in \cite{FS16}]
\label{thm:epi}
There exists $\kappa\in (0,1)$ with the property that for any $c\in W^{1,2}(\partial B_1)$
with $c\geq 0$ on $\p B'_1$,
there exists $u\in W^{1,2}(B_1)$ with $u=c$ on $\p B_1$ and $u\geq 0$ on $B'_1$ such that
\begin{align*}
W(1,u)\leq  (1-\kappa)W(1,\tilde{c}).
\end{align*}
Here $\tilde{c}(x)=|x|^{3/2}c(x/|x|)$ is the $3/2$-homogeneous extension of $c$.
\end{thm}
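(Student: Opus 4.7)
The plan is to argue by contradiction and compactness, following the strategy of Weiss \cite{Weiss99} for the classical obstacle problem, suitably adapted to the Signorini setting. Suppose no such $\kappa$ exists; then there is a sequence $c_k \in W^{1,2}(\partial B_1)$ with $c_k \geq 0$ on $\partial B_1'$ and $W(1, \tilde{c}_k) > 0$ such that every admissible competitor $u_k$ (meaning $u_k|_{\partial B_1} = c_k$ and $u_k \geq 0$ on $B_1'$) satisfies $W(1, u_k) > (1 - 1/k)\, W(1, \tilde{c}_k)$.

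First I would normalize. Since elements of $\mathcal{E}$ are global $3/2$-homogeneous solutions to the thin obstacle problem, they contribute zero Weiss energy and are invariant under the $3/2$-homogeneous extension; subtracting the projection $\overline{\Pr}(\tilde{c}_k,1)$ (and checking that this operation preserves both the non-negativity on $B_1'$ and the failure of the improvement inequality, up to a small error that is controlled by $W(1,\tilde{c}_k)$) reduces to the case $\overline{\Pr}(\tilde{c}_k,1)=0$. A further rescaling normalizes $W(1, \tilde{c}_k)=1$. The $3/2$-homogeneity of $\tilde{c}_k$ then converts this normalization, via the explicit formula $|\nabla \tilde c|^2 = \tfrac{9}{4}|x|\,c(x/|x|)^2 + |x|^{-1}|\nabla_{S^n}c(x/|x|)|^2$, into a uniform $W^{1,2}(B_1)$ bound on $\tilde{c}_k$ and a uniform $W^{1,2}(\partial B_1)$ bound on $c_k$.

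Compactness then yields, along a subsequence, $\tilde{c}_k \rightharpoonup \tilde{c}_\infty$ weakly in $W^{1,2}(B_1)$ and $c_k \to c_\infty$ strongly in $L^2(\partial B_1)$; the limit is $3/2$-homogeneous, $c_\infty \geq 0$ on $\partial B_1'$, and satisfies $\overline{\Pr}(\tilde{c}_\infty,1)=0$. Consider the minimizer $u_k^*$ of $W(1,\cdot)$ over admissible competitors with boundary data $c_k$; the non-improvement hypothesis forces $W(1,u_k^*) \to W(1,\tilde{c}_\infty)$, while weak compactness and lower semi-continuity of the Dirichlet part, combined with strong $L^2(\partial B_1)$ convergence of the boundary term, identify a weak limit $u_\infty$ which is itself an admissible competitor for $c_\infty$ and satisfies $W(1,u_\infty) \leq W(1,\tilde{c}_\infty)$. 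The non-improvement limit then forces $W(1,u_\infty) = W(1,\tilde{c}_\infty)$, so that $\tilde{c}_\infty$ is itself a minimizer; in particular, $\tilde{c}_\infty$ is a variational solution to the thin obstacle problem in $B_1$ in the sense of \eqref{eq:limit_equ}.

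The main obstacle is the final classification step. Being $3/2$-homogeneous, $\tilde{c}_\infty$ extends to a global solution on $\R^{n+1}$ with exactly the growth $|\tilde{c}_\infty(x)| \leq C|x|^{3/2}$; Lemma \ref{lem:global}(iii) then forces $\tilde{c}_\infty = b\, x_{n+1} + d\, \operatorname{Re}(Qx' + i|x_{n+1}|)^{3/2}$, and $3/2$-homogeneity kills the linear term. Thus $\tilde{c}_\infty \in \mathcal{E}$. This contradicts $\overline{\Pr}(\tilde{c}_\infty,1) = 0$ together with $W(1,\tilde{c}_\infty) = 1 \neq 0$, closing the argument. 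The delicate point is justifying the reduction by the $\mathcal{E}$-projection in the first step (since $\mathcal{E}$ is only a cone, not a linear subspace, so the Pythagorean decomposition must be replaced by a one-sided inequality in the spirit of Lemma \ref{lem:orthogonal} and Remark \ref{rmk:orthogonal}) and in verifying that the obstacle constraint $u_\infty \geq 0$ on $B_1'$ passes to the weak limit, which follows from the strong $L^2$ convergence of the boundary traces on $B_1'$ guaranteed by the improved compactness of the obstacle problem.
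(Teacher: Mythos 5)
The paper does not prove Theorem \ref{thm:epi}: it cites it as Theorem~3.1 of \cite{FS16} and only supplies a one-line scaling argument to remove the $\delta$-closeness hypothesis $\inf_{h\in\mathcal{E}}\|c-h\|_{W^{1,2}(\partial B_1)}\leq\delta$ present in the original statement (both sides of the epiperimetric inequality are $2$-homogeneous in $c$ and $\mathcal{E}$ is a cone, so rescaling $c\mapsto\lambda c$ with $\lambda=\delta/\inf_{h\in\mathcal{E}}\|c-h\|$ reduces the general case to the $\delta$-close case). Your proposal is instead a from-scratch contradiction-and-compactness proof of the full epiperimetric inequality, which is a substantially different, and substantially more ambitious, route.

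Unfortunately, the compactness step breaks down, and this is precisely why the $\delta$-closeness hypothesis is present in the cited results. The normalization $W(1,\tilde c_k)=1$ does \emph{not} give a uniform $W^{1,2}$ bound: by the identity \eqref{eq:homo_energy},
\begin{align*}
W(1,\tilde c_k)=\frac{1}{n+2}\int_{\partial B_1}|\nabla_\theta c_k|^2\,d\mathcal{H}^n-\frac{6n+3}{4(n+2)}\int_{\partial B_1}c_k^2\,d\mathcal{H}^n,
\end{align*}
so $W(1,\tilde c_k)=1$ is perfectly compatible with $\|c_k\|_{L^2(\partial B_1)}\to\infty$ and $\|\nabla_\theta c_k\|_{L^2(\partial B_1)}\to\infty$ at commensurate rates. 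The constraint $\overline{\Pr}(\tilde c_k,1)=0$ does not rescue this, since it only expresses an orthogonality condition and puts no a priori upper bound on $\|c_k\|_{L^2(\partial B_1)}$. Without an $L^2(\partial B_1)$ bound there is no weak $W^{1,2}$ limit, and the rest of the argument has nothing to work with; this is exactly the gap that the $\delta$-closeness assumption fills in \cite{FS16} and \cite{GPSVG15}. Moreover, the initial reduction to $\overline{\Pr}(\tilde c_k,1)=0$ by subtracting the $\mathcal{E}$-projection is itself inadmissible: elements $p\in\mathcal{E}$ are nonnegative and strictly positive on parts of $B_1'$, so $c-p$ generally violates the constraint $\geq 0$ on $\partial B_1'$, and the competitor class is no longer the one for which the inequality is being proved. (Lemma~\ref{lem:orth_comp} in the paper gives a one-sided Weiss-energy inequality for $w-p_{3/2}$, but explicitly does \emph{not} assert that $w-p_{3/2}$ is admissible, which is what your reduction would need.) Finally, even granting compactness, identifying $\tilde c_\infty$ as a minimizer requires the Dirichlet energies $\int|\nabla\tilde c_k|^2$ to converge, not merely to be weakly lower semicontinuous; this does not follow from $c_k\to c_\infty$ strongly in $L^2(\partial B_1)$ alone.

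In short, the theorem here is an external citation plus a homogeneity argument, not an internal result; a compactness re-derivation would need the $\delta$-closeness to localize, or a genuinely different decomposition mechanism as in \cite{FS16}.
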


Note that formally
 Theorem 3.1 in \cite{FS16} has the additional assumption that $\inf_{h\in \mathcal{E}}\|c-h\|_{W^{1,2}(\p B_1)}\leq \delta$ for some small $\delta>0$. However this assumption can be removed by a simple scaling argument. Indeed, the epiperimetric inequality (with constant $\kappa$) holds for $c$, iff it holds for $\lambda c$ for $\lambda>0$, since the epiperimetric inequality is homogeneous. \\
In the sequel, we will use the epiperimetric inequality as a central tool, which entails a geometric decay of the Weiss energy associated with a solution to the thin obstacle problem (c.f. Corollary~\ref{cor:perturb_epi}). In order to obtain this, we begin by proving
the following auxiliary property (which is of algebraic nature).

\begin{lem}
\label{lem:deri_energy}
Given any $u\in C^1(\overline{B_1})$, let $u_r(x):=u(rx)/r^{3/2}$ and let $\tilde{u}_r(x):=|x|^{3/2} u_r(x/|x|)$ be the $3/2$-homogeneous extension of $u_r\big|_{\p B_1}$. Then,
\begin{align*}
\frac{d}{dr}W(r,u)\geq \frac{n+2}{r}\left(W(1,\tilde{u}_r)-W(r,u)\right),\quad r\in (0,1).
\end{align*}
\end{lem}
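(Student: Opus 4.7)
The proof is a direct computation. The strategy is to express both $\frac{d}{dr}W(r,u)$ and $\frac{n+2}{r}W(1,\tilde u_r)$ as integrals over $\p B_r$ alone, and then to show that their combination, when adjusted by $\frac{n+2}{r}W(r,u)$, is a perfect square on $\p B_r$.

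First I would differentiate $W(r,u)$ in $r$ by hand. The coarea formula handles the bulk integral; for the surface term the polar-coordinate identity
\begin{align*}
\frac{d}{dr}\int_{\p B_r}u^2\dH = \frac{n}{r}\int_{\p B_r}u^2\dH + 2\int_{\p B_r}u\,\p_\nu u\dH
\end{align*}
gives, after collecting terms,
\begin{align*}
\frac{d}{dr}W(r,u) = & -\frac{n+2}{r^{n+3}}\int_{B_r}|\nabla u|^2 dx + \frac{1}{r^{n+2}}\int_{\p B_r}|\nabla u|^2 \dH \\
& + \frac{9}{2r^{n+4}}\int_{\p B_r}u^2 \dH - \frac{3}{r^{n+3}}\int_{\p B_r}u\,\p_\nu u \dH.
\end{align*}

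Next I would compute $W(1,\tilde u_r)$. Setting $c(\omega) := u(r\omega)/r^{3/2}$, so that $\tilde u_r(\rho\omega) = \rho^{3/2}c(\omega)$, the radial/tangential decomposition of the gradient gives $|\nabla \tilde u_r(\rho\omega)|^2 = \rho\bigl(\tfrac{9}{4}c(\omega)^2 + |\nabla_\tau c(\omega)|^2\bigr)$. Integrating $\rho^{n+1}$ over $(0,1)$ and then rescaling from $\p B_1$ back to $\p B_r$ via $|\nabla_\tau c(\omega)|^2 = r^{-1}|\nabla_\tau u(r\omega)|^2$ and $c(\omega)^2 = r^{-3}u(r\omega)^2$ yields
\begin{align*}
\frac{n+2}{r}W(1,\tilde u_r) = \frac{1}{r^{n+2}}\int_{\p B_r}|\nabla_\tau u|^2\dH - \frac{6n+3}{4r^{n+4}}\int_{\p B_r}u^2\dH.
\end{align*}

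Finally I would add
$\frac{n+2}{r}W(r,u) = \frac{n+2}{r^{n+3}}\int_{B_r}|\nabla u|^2 dx - \frac{3(n+2)}{2r^{n+4}}\int_{\p B_r}u^2 \dH$
to $\frac{d}{dr}W(r,u)$ and subtract $\frac{n+2}{r}W(1,\tilde u_r)$. The interior Dirichlet terms cancel exactly, and using the pointwise split $|\nabla u|^2 = |\nabla_\tau u|^2 + (\p_\nu u)^2$ on $\p B_r$, the remaining boundary integrals collapse into
\begin{align*}
\frac{d}{dr}W(r,u) - \frac{n+2}{r}\bigl[W(1,\tilde u_r) - W(r,u)\bigr] = \frac{1}{r^{n+2}}\int_{\p B_r}\left(\p_\nu u - \frac{3u}{2r}\right)^{2}\dH \geq 0,
\end{align*}
which is the claim. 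The entire argument is mechanical bookkeeping; the only conceptual step is recognizing the integrand as a perfect square. This reflects the fact that $\p_\nu \phi = \frac{3}{2r}\phi$ on $\p B_r$ exactly when $\phi$ coincides near $\p B_r$ with its $3/2$-homogeneous radial extension from $\p B_r$, so equality in the lemma characterises precisely the $3/2$-homogeneous functions—consistent with the Weiss energy being constant on such profiles.
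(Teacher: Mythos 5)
Your proposal is correct and follows essentially the same route as the paper: you compute the same expression for $\frac{d}{dr}W(r,u)$, use the same rescaling identity for $W(1,\tilde u_r)$, and the inequality in both cases is precisely the non-negativity of $\left(\p_\nu u - \frac{3u}{2r}\right)^2$ integrated over $\p B_r$. The only cosmetic difference is that the paper packages this last step as an application of "Cauchy--Schwarz" rather than writing out the perfect square explicitly, so your version also makes the equality case transparent.
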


\begin{proof}
The proof follows from a direct computation:
First we note that if $\tilde{c}(x)=|x|^{3/2}c(x/|x|)$ for some $c:\p B_1\rightarrow \R$,
then it is possible to compute that
\begin{align}
\label{eq:homo_energy}
W(r,\tilde{c})=\frac{1}{n+2}\left(\int_{\p B_1}|\nabla_\theta c|^2 \dH
-\frac{6n+3}{4}\int_{\p B_1}|c|^2 \dH \right),
\end{align}
which is independent of $r$.
Next given any $u\in C^1(\overline{B_1})$, we compute $\frac{d}{dr}W(r,u)$:
\begin{align*}
\frac{d}{dr}W(r,u)&=\frac{-(n+2)}{r^{n+3}}\int_{B_r}|\nabla u|^2dx
+\frac{1}{r^{n+2}}\int_{\p B_r}|\nabla u|^2 \dH\\
&+\frac{3}{2}\frac{(n+3)}{r^{n+4}}\int_{\p B_r}u^2 \dH
-\frac{3}{2}\frac{1}{r^{n+3}}\left(\frac{n}{r}\int_{\p B_r}u^2 \dH
+ 2\int_{\p B_r}u\p_\nu u \dH \right)\\
&=-\frac{n+2}{r}W(r,u)-\frac{3}{2}\frac{n-1}{r^{n+4}}\int_{\p B_r}u^2 \dH\\
&\quad
+\frac{1}{r^{n+2}}\int_{\p B_r}|\nabla u|^2 \dH -\frac{3}{r^{n+3}}\int_{\p B_r}u\p_\nu u \dH.
\end{align*}
Applying Cauchy-Schwartz to the last term and setting $\tilde{u}_r(x)=
|x|^{3/2}u_r(\frac{x}{|x|})$, we obtain
\begin{align*}
\frac{d}{dr}W(r,u)&\geq -\frac{n+2}{r}W(r,u)
+\frac{1}{r^{n+4}}\int_{\p B_r}|\nabla _\theta u|^2 \dH\\
&\quad
-\left(\frac{3(n-1)}{2}+\frac{9}{4}\right)\frac{1}{r^{n+4}}\int_{\p B_r}u^2 \dH\\
&=-\frac{n+2}{r}W(r,u)+\frac{1}{r}\int_{\p B_1}|\nabla _\theta u_r|^2 \dH
-\frac{6n+3}{4r}\int_{\p B_1}u_r^2 \dH\\
&\stackrel{(\ref{eq:homo_energy})}{=}\frac{n+2}{r}\left(W(1,\tilde{u}_r)-W(r,u)\right).
\end{align*}
This shows the desired estimate.
\end{proof}

With this at hand, we invoke the epiperimetric inequality to
deduce a differential inequality for the Weiss energy. This allows us to
derive an explicit decay rate for the Weiss energy.

\begin{cor}\label{cor:perturb_epi}
Assume that $w$ is a local minimizer to the functional
\begin{align*}
J(u)=\int_{B_1}a^{ij}\p_iu\p_ju + g^i\p_i u dx,
\quad u\in \mathcal{K}=\{v\in W^{1,2}(B_1): v\geq 0 \text{ on } B'_1\},
\end{align*}
where $a^{ij}\in C^{0,\alpha}(B_1), g^i\in C^{0,\alpha}(B_1)$ with
$\alpha\in (1/2,1)$ and where the normalization assumption (N) is satisfied.
Then there exists a constant $\delta_0>0$ such that if
$[a^{ij}]_{{C}^{0,\alpha}(B_1)}+[g^i]_{{C}^{0,\alpha}(B_1)}\leq \delta_0$,
then
\begin{align*}
W(r,w)&\leq \left(\frac{r}{r_1}\right)^{\beta}W(r_1,w)+C\delta_0r^\beta,
\quad r\in (0,r_1]
\end{align*}
for some $\beta\in (0,\frac{1}{2}(\alpha-\frac{1}{2}))$,
$C=C(n,\alpha,\delta_0,
\|w\|_{\LL(B_1)})>0$ and $r_1\in (0,1/2]$ arbitrary.
\end{cor}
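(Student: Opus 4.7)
The plan is to combine Lemma~\ref{lem:deri_energy} with the epiperimetric inequality (Theorem~\ref{thm:epi}) and the local minimality of $w$, in order to derive a differential inequality for $r \mapsto W(r,w)$ that can be integrated to give the claimed geometric decay. Starting from the scaling identity $W(r,w) = W(1, w_r)$ with $w_r(x) := w(rx)/r^{3/2}$, I would apply Theorem~\ref{thm:epi} to the boundary trace $c := w_r|_{\p B_1}$, which lies in $W^{1,2}(\p B_1)$ and satisfies $c \geq 0$ on $\p B_1'$ by Proposition~\ref{prop:Hoelder}. This produces a competitor $u \in W^{1,2}(B_1)$ with $u = w_r$ on $\p B_1$, $u \geq 0$ on $B_1'$, and $W(1,u) \leq (1-\kappa) W(1, \tilde w_r)$.

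Next, I would compare $W(1, w_r)$ with $W(1, u)$ by using local minimality of $w$ with the admissible competitor $\tilde u(y) := r^{3/2} u(y/r)$ on $B_r$, extended by $w$ on $B_1 \setminus B_r$ (this choice matches $w$ on $\p B_r$ and preserves the obstacle constraint). Using $|a^{ij}(y) - \delta^{ij}| \leq \delta_0 |y|^\alpha$ and $|g^i(y)| \leq \delta_0 |y|^\alpha$ (from (N) and the Hölder hypothesis) in the minimality inequality $J(w) \leq J(\tilde u)$, and rescaling back to $B_1$ -- where the $3/2$-homogeneous normalization of $w_r$ forces the $g^i$ contribution to scale only as $r^{\alpha - 1/2}$, rather than $r^\alpha$ -- the cancellation of the identical boundary terms (since $u = w_r$ on $\p B_1$) produces
\begin{align*}
W(1, w_r) &\leq W(1, u) + C\delta_0 r^{\alpha}\bigl(\|\nabla w_r\|_{L^2(B_1)}^2 + \|\nabla u\|_{L^2(B_1)}^2\bigr) \\
&\quad + C\delta_0 r^{\alpha - 1/2}\bigl(\|\nabla w_r\|_{L^2(B_1)} + \|\nabla u\|_{L^2(B_1)}\bigr).
\end{align*}
Invoking the almost optimal regularity at the free boundary point with $a_0 = 0$, I obtain $|\nabla w(y)| \leq C|y|^{\beta_0}$ and $|w(y)| \leq C|y|^{1+\beta_0}$ for any $\beta_0 < 1/2$, which upon direct rescaling gives $\|\nabla w_r\|_{L^2(B_1)}^2 \leq C r^{2\beta_0 - 1} \leq C r^{-\epsilon_0}$ for arbitrarily small $\epsilon_0 > 0$. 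The analogous bound $\|\nabla u\|_{L^2(B_1)}^2 \leq C r^{-\epsilon_0}$ follows from the identity $\|\nabla u\|_{L^2(B_1)}^2 = W(1,u) + \tfrac{3}{2}\int_{\p B_1} w_r^2$ combined with the epiperimetric bound on $W(1,u)$, the identity \eqref{eq:homo_energy} applied to $\tilde w_r$, and the boundary $L^2$ and $W^{1,2}$ bounds on $w_r$ inherited from Proposition~\ref{prop:Hoelder}. This yields the clean error estimate $W(1, w_r) \leq (1-\kappa) W(1, \tilde w_r) + C\delta_0 r^{\alpha - 1/2 - \epsilon_0}$.

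Finally, chaining this with Lemma~\ref{lem:deri_energy} rearranges to the differential inequality
\begin{align*}
r \frac{d}{dr} W(r,w) \geq \tau W(r,w) - C\delta_0 r^{\alpha - 1/2 - \epsilon_0}, \qquad \tau := \frac{(n+2)\kappa}{1-\kappa} > 0.
\end{align*}
Multiplying by the integrating factor $r^{-\tau - 1}$ and integrating from $r$ to $r_1$ yields
\begin{align*}
W(r, w) \leq (r/r_1)^\tau W(r_1, w) + C\delta_0\, r^{\min(\tau,\, \alpha - 1/2 - \epsilon_0)},
\end{align*}
and choosing $\epsilon_0 > 0$ sufficiently small together with any $\beta \in (0, \min(\tau,\, (\alpha - 1/2)/2))$ (which is nonempty since both $\tau$ and $\alpha - 1/2$ are strictly positive) gives the desired conclusion, using $(r/r_1)^\tau \leq (r/r_1)^\beta$ and $r^{\gamma} \leq r^\beta$ for $\gamma \geq \beta$ and $r \leq r_1 \leq 1$. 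The main obstacle is the delicate tracking of the $r^{-\epsilon_0}$ losses coming from the almost-but-not-yet-optimal regularity: the argument succeeds only because the $r^{\alpha - 1/2}$ factor from the Hölder continuity of the coefficients (and inhomogeneity) has a genuinely positive exponent when $\alpha > 1/2$, leaving enough margin to absorb the regularity loss and still produce a positive effective decay exponent for the error.
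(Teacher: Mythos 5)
Your proof is correct and follows the same strategy as the paper: combine the epiperimetric inequality (applied to the boundary trace of $w_r$) with the local minimality of $w$ for the variable-coefficient functional $J$, convert between $J$ and the Dirichlet energy using the Hölder bounds $|a^{ij}-\delta^{ij}|\leq \delta_0|x|^\alpha$ and $|g^i|\leq\delta_0|x|^\alpha$ together with the almost-optimal regularity of Proposition~\ref{prop:Hoelder} to control the $r^{-\epsilon_0}$ losses, and then plug the resulting inequality $W(1,w_r)\leq(1-\kappa)W(1,\tilde w_r)+C\delta_0 r^{\alpha-1/2-\epsilon_0}$ into Lemma~\ref{lem:deri_energy} and integrate. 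The only cosmetic difference is in the bookkeeping: the paper tracks multiplicative factors $(1\pm e(r))$ that effectively replace $\kappa$ by a slightly modified $\tilde\kappa$, while you keep $\kappa$ and push all coefficient errors into additive terms, absorbing them via the uniform (up to $r^{-\epsilon_0}$) bounds on $\|\nabla w_r\|_{L^2(B_1)}$ and $\|\nabla u\|_{L^2(B_1)}$; both accounts land at the same differential inequality and the same final decay exponent after integration.
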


\begin{proof}
The functional $J$ can be viewed as a perturbation of the classical Dirichlet energy.
More precisely, setting
\begin{align*}
 J(u,r):=\int_{B_r}a^{ij}\p_iu\p_ju + g^i\p_i u dx,
\end{align*}
the normalization condition (N) in combination with the
$C^{0,1/2-\epsilon}_{loc}(B_1)$
regularity of $|\nabla u|$ implies that
\begin{align*}
&\left(1-[a^{ij}]_{{C}^{0,\alpha}}r^{\alpha}\right)\int_{B_r}|\nabla u|^2 dx
- C[g^i]_{{C}^{0,\alpha}}r^{\alpha+n+3/2-\epsilon}
\leq J(u,r)\\
& \leq \left(1+[a^{ij}]_{{C}^{0,\alpha}}r^{\alpha}\right)\int_{B_r}|\nabla u|^2 dx
+ C[g^i]_{{C}^{0,\alpha}}r^{\alpha+n+3/2-\epsilon},
\end{align*}
where $C=C(n,\alpha,\|a^{ij}\|_{C^{0,\alpha}},\|g^i\|_{C^{0,\alpha}}, \epsilon)$ and $\epsilon
\in (0,1/2)$.
Furthermore, recalling the definition of the Weiss energy and denoting
$e(r):=[a^{ij}]_{{C}^{0,\alpha}}r^{\alpha}$ for simplicity, we have
\begin{align*}
&\left(1-e(r)\right) W(r,u)
-C[g^i]_{{C}^{0,\alpha}} r^{\alpha-1/2-\epsilon}
+ \left(1-e(r)\right)\frac{3}{2r^{n+3}} \int_{\p B_r}u^2 \dH\\
&\leq \frac{J(u,r)}{r^{n+2}}\\
&\leq \left(1+e(r)\right) W(r,u)+C[g^i]_{{C}^{0,\alpha}} r^{\alpha-1/2-\epsilon}
+\left(1+e(r)\right)\frac{3}{2r^{n+3}} \int_{\p B_r}u^2 \dH.
\end{align*}
Thus, the epiperimetric inequality (c.f. Theorem~\ref{thm:epi}) and a two fold comparison argument
(first applied to solutions
of the standard Dirichlet energy, then to the variable coefficient energy $J(u,r)$, where we use that
$w$ is a local minimizer) yield:
\begin{align*}
W(r,w) &=W(1,w_r)
\leq (1-\kappa)\frac{1+ e(r)}{1-e(r)}W(1,\tilde{w}_r)
\\
&+\frac{2C}{1-e(r)}[g^i]_{{C}^{0,\alpha}} r^{\alpha-1/2-\epsilon}
+ \frac{3 e(r)}{(1-e(r))r^{n+3}} \int\limits_{\partial B_r} w^2 \dH.
\end{align*}
Here $w_r(x):= r^{-3/2} w(r x)$ and
$\tilde{w}_r(x)=|x|^{3/2}w_r(x/|x|)$.
We emphasize that at this point it was crucial to use the epiperimetric inequality in the form
of \cite{FS16}, c.f. Theorem \ref{thm:epi}, as the assumptions of the analogous epiperimetric
inequality from \cite{GPSVG15} are more restrictive and would already require stronger control
on the $3/2$-homogeneous rescalings $\tilde{w}_r(x)$.\\
Thus, if $[a^{ij}]_{{C}^{0,\alpha}}$ is sufficiently small,
we have $\tilde{\kappa}:=-\frac{2e(r)}{1-e(r)}+\kappa \frac{1+e(r)}{1-e(r)}\in (0,1)$. This together with the $C^{1,1/2-\epsilon}_{loc}$ regularity of $w$ gives
\begin{align*}
W(r,w)\leq (1-\tilde{\kappa})W(1,\tilde{w}_r)+C\delta_0r^{\alpha-1/2-\epsilon},
\end{align*}
where here and in the sequel the constant $C>0$ differs from line by line by a universal constant.
Applying Lemma~\ref{lem:deri_energy} hence results in
\begin{align*}
\frac{d}{dr}W(r,w)&\geq \frac{n+2}{r}\left(W(1,\tilde{w}_r)-W(r,w)\right)\\
&\geq \frac{n+2}{r} \frac{\tilde{\kappa}}{1-\tilde{\kappa}} W(r,w)
-\frac{n+2}{(1-\tilde{\kappa})r}C\delta_0r^{\alpha-1/2-\epsilon}\\
&\geq \frac{n+2}{r} \frac{\tilde{\kappa}}{1-\tilde{\kappa}} W(r,w)
-C\delta_0 r^{\alpha-3/2-\epsilon}.
\end{align*}
Using an integrating factor, this turns into
\begin{align*}
 \frac{d}{dr}\left( W(r,w) r^{-\beta} \right)
 \geq -C\delta_0 r^{\alpha-3/2-\beta-\epsilon},\quad \beta= \frac{(n+2)\tilde{\kappa}}{1-\tilde{\kappa}}.
\end{align*}
We note that by choosing $[a^{ij}]_{C^{0,\alpha}}$ sufficiently small, we may without loss of generality assume that $0<\beta<\alpha-1/2-\epsilon$.
An integration thus yields for $r<r_1$,
\begin{equation}\label{eq:Weiss_decay}
 W(r,w) \leq \left(\frac{r}{r_1}\right)^{\beta}W(r_1,w)
 +C\delta_0 r^\beta\left(r_1-r\right)^{\alpha-1/2-\beta-\epsilon}.
\end{equation}
Choosing $\beta +\epsilon\in (0,\frac{1}{2}(\alpha-\frac{1}{2}))$ sufficiently small, we obtain the desired estimate.
\end{proof}

\subsection{Optimal regularity}
\label{subsec:opt}
As the central result of this subsection, we derive the optimal regularity of solutions to the variable coefficient thin obstacle problem. Here the crucial ingredient is a quantitative asymptotic expansion of solutions at the free boundary in terms of limiting profiles in $\mathcal{E}$ (c.f. Lemma \ref{lem:negative}). This is obtained by exploiting the decay rate of the Weiss energy.\\

We begin our discussion of the optimal regularity properties with a first approximation result, which also uses the decay of the Weiss energy as an essential ingredient in obtaining compactness. This approximation result roughly states that there is a gap of the decay rate of the solution around free boundary points, i.e. solutions either decay faster than $r^{3/2+\epsilon_0}$ or stay close to $c_nh_{3/2}$ (after an $L^2$-normalization).

\begin{lem}
\label{lem:blow_up}
Let $w:B_1 \rightarrow \R$ be a solution to the thin obstacle problem
satisfying the condition (N). Assume that $\|w\|_{\LL(B_1)}=1$ and that $0 \in \Gamma_w$. Let $\delta_0,\beta$ be the constants from Corollary~\ref{cor:perturb_epi}. Assume that $[a^{ij}]_{{C}^{0,\alpha}}+[g^i]_{{C}^{0,\alpha}}\leq \delta_0$.
Then, for any $\mu>0$ and
$\epsilon_0\in (0,\frac{\beta}{4})$ fixed,
there exists
$r_0=r_0(\mu, \|a^{ij}\|_{C^{0,\alpha}}, \|g^i\|_{C^{0,\alpha}},\epsilon_0)>0$
such that for those $r\in (0,r_0)$ satisfying
$\|w\|_{\tilde{L}^2(\p B_r)}\geq r^{\frac{3}{2}+\epsilon_0}$, it holds
\begin{align*}
\|w-\overline{\Pr}(w,r)\|_{\tilde{L}^2(\p B_r)}\leq \mu \|w\|_{\tilde{L}^2(\p B_r)}.
\end{align*}
In particular,
\begin{align*}
\|\overline{\Pr}(w,r)\|_{\LL(\partial B_r)} \geq (1-\mu)\|w\|_{\LL(\p B_r)}.
\end{align*}
\end{lem}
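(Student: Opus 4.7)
I would argue by contradiction. Suppose the conclusion fails. Then there exist $\mu>0$, $\epsilon_0\in(0,\beta/4)$, sequences of solutions $w_k$ (satisfying the hypotheses, with $\|w_k\|_{\LL(B_1)}=1$ and $0\in\Gamma_{w_k}$) and radii $r_k\to 0$ such that
\[
d_k:=\|w_k\|_{\LL(\partial B_{r_k})}\geq r_k^{3/2+\epsilon_0}, \qquad \|w_k-\overline{\Pr}(w_k,r_k)\|_{\LL(\partial B_{r_k})}>\mu\, d_k.
\]
(The H\"older estimate from Lemma~\ref{lem:almost_linear} together with Proposition~\ref{prop:Hoelder} forces $r_k\to 0$.) I would then consider the normalized rescalings $v_k(x):=w_k(r_k x)/d_k$, for $x\in B_{1/r_k}$, which satisfy $\|v_k\|_{\LL(\partial B_1)}=1$, $0\in\Gamma_{v_k}$, and (by the standard rescaling argument of Step~1a of Lemma~\ref{lem:growth_a}) solve a variable coefficient thin obstacle problem with coefficients $a^{ij}(r_k\cdot)\to\delta^{ij}$ uniformly and rescaled inhomogeneity $\tilde g^i_k(x):=r_k g^i(r_k x)/d_k$. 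Using $g^i(0)=0$, the H\"older bound on $g^i$, and $d_k\geq r_k^{3/2+\epsilon_0}$, one checks that $\|\tilde g^i_k\|_{L^\infty(B_R)}\leq C_R [g^i]_{C^{0,\alpha}} r_k^{\alpha-1/2-\epsilon_0}\to 0$, since $\epsilon_0<\beta/4<\tfrac{1}{2}(\alpha-\tfrac12)<\alpha-\tfrac12$ by the choice of $\beta$ in Corollary~\ref{cor:perturb_epi}.

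The decisive input is the Weiss-energy decay from Corollary~\ref{cor:perturb_epi}, which gives $W(r_k,w_k)\leq C r_k^\beta$. The quadratic scaling $W(1,v_k)=(r_k^3/d_k^2)W(r_k,w_k)$ together with $d_k\geq r_k^{3/2+\epsilon_0}$ yields
\[
W(1,v_k)\leq C r_k^{\beta-2\epsilon_0}\xrightarrow[k\to\infty]{}0.
\]
Combined with the normalization $\|v_k\|_{\LL(\partial B_1)}=1$, this produces a uniform bound $\int_{B_1}|\nabla v_k|^2\,dx = W(1,v_k)+\tfrac{3}{2}\int_{\partial B_1}v_k^2\,\dH \leq C$. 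Lemma~\ref{lem:AU} (applied with the uniformly controlled coefficients and vanishing $\tilde g^i_k$) then yields uniform $W^{1,2}(B_1)$ and $C^{0,\beta'}_{loc}(B_1)$ bounds, so along a subsequence $v_k\rightharpoonup v_0$ in $W^{1,2}(B_1)$, strongly in $L^2(B_1)$ and, by trace compactness, strongly in $L^2(\partial B_1)$. Passing to the limit in the rescaled variational inequality (as in Step~1a of Lemma~\ref{lem:growth_a}), $v_0$ solves the constant-coefficient thin obstacle problem in $B_1$, satisfies $\|v_0\|_{\LL(\partial B_1)}=1$, and has $0\in\Gamma_{v_0}$ by locally uniform convergence.

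To identify $v_0\in\mathcal{E}$, I would combine the vanishing of $W(1,v_k)$ with Weiss monotonicity for the constant-coefficient problem. Lower semicontinuity of the Dirichlet integral and strong $L^2(\partial B_1)$-convergence of the boundary term give $W(1,v_0)\leq\liminf_k W(1,v_k)=0$, while Weiss's monotonicity formula (Theorem~9.24 in \cite{PSU}) together with the $3/2$-lower bound on the frequency at a thin obstacle free boundary point yields $W(r,v_0)\geq W(0+,v_0)\geq 0$ for all $r\in(0,1]$. Hence $W(\cdot,v_0)\equiv 0$ on $(0,1]$, and the equality case of Weiss's formula forces $v_0$ to be $3/2$-homogeneous; the classification of $3/2$-homogeneous solutions with $0\in\Gamma$ (see \cite{PSU}) then identifies $v_0\in\mathcal{E}$. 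By the definition of $\overline{\Pr}$ and $v_0\in\mathcal{E}$,
\[
\|v_k-\overline{\Pr}(v_k,1)\|_{\LL(\partial B_1)}\leq \|v_k-v_0\|_{\LL(\partial B_1)}\to 0,
\]
and by the scaling of $v_k$ this equals $\|w_k-\overline{\Pr}(w_k,r_k)\|_{\LL(\partial B_{r_k})}/d_k$, contradicting its being $>\mu$. The \emph{In particular} statement follows from the triangle inequality $\|\overline{\Pr}(w,r)\|_{\LL(\partial B_r)}\geq \|w\|_{\LL(\partial B_r)}-\|w-\overline{\Pr}(w,r)\|_{\LL(\partial B_r)}$. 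The main obstacle is the identification step $v_0\in\mathcal{E}$: it hinges on converting the purely energetic vanishing $W(1,v_0)\leq 0$ into $3/2$-homogeneity via the Weiss monotonicity, the frequency lower bound, and the classification of $3/2$-homogeneous constant-coefficient solutions.
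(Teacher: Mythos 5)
Your proof is correct and follows essentially the same path as the paper: contradiction, rescaling by the boundary $L^2$ norm, the Weiss-energy decay from Corollary~\ref{cor:perturb_epi} yielding $W(1,v_k)\to 0$ and hence compactness, passage to the constant-coefficient limit, and identification of the blow-up as a nontrivial element of $\mathcal{E}$. The only minor variation is in the identification step, where the paper argues via the frequency function (combining $\int_{B_1}|\nabla v_0|^2 \leq 3/2$ from lower semicontinuity with $N(1,v_0)\geq 3/2$ and frequency monotonicity), while you phrase the equivalent argument directly in terms of the Weiss energy ($W(1,v_0)\leq 0$ together with $W(0_+,v_0)\geq 0$ forcing $W\equiv 0$); both rely on the same monotonicity formula, the same $3/2$ frequency lower bound at a free boundary point, and the same classification of $3/2$-homogeneous global solutions.
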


\begin{proof}
The proof of the result follows from a compactness argument.
Assume that the statement were wrong.
Then there existed $\mu_0>0$, a sequence of solutions $w^k$ and a sequence of
radii $r_k\rightarrow 0_+$ such that $\|w^k\|_{\LL(B_1)}=1$, $0\in \Gamma_{w^k}$ and
$\|w^k\|_{\tilde{L}^2(\p B_{r_k})}\geq r^{\frac{3}{2}+\epsilon_0}_k$,
but with $\|w^k-\overline{\Pr}(w^k,r_k)\|_{\tilde{L}^2(\p B_{r_k})}>\mu_0 \|w^k\|_{\tilde{L}^2(\p B_{r_k})}$.\\
Let $w^k_{r_k}(x):=w^k(r_k x)/\|w^k\|_{\tilde{L}^2(\p B_{r_k})}$.
Invoking Corollary \ref{cor:perturb_epi} with $r=r_k$ and $r_1 = \frac{1}{2}$, we obtain
\begin{align*}
W(1,w^k_{r_k})=\frac{r_k^3}{\|w^k\|_{\LL(\p B_{r_k})}^2}W(r_k,w^k)\leq C\frac{r_k^{3+\beta}}{\|w^k\|_{\LL(\p B_{r_k})}^2}.
\end{align*}
We emphasize that we in particular made use of the bound
$\|w^k\|_{\LL(B_1)}= 1$,
in order to estimate the contribution of $W(1/2,w^k)$ in an in $k$ uniform manner. Using the lower bound $\|w^k\|_{\LL(\p B_{r_k})} \geq r^{3/2+\epsilon_0}_k$ and letting $\epsilon_0<\beta/4$, we obtain
\begin{equation}\label{eq:bound_W}
W(1,w^k_{r_k})\leq C r_k^{\beta/2}.
\end{equation}
Recalling that $W(1,w^k_{r_k})=\|\nabla w_{r_k}^k\|_{L^2(B_1)}^2-\frac{3}{2}\|w_{r_k}^k\|_{L^2(\p B_1)}^2$ and that $\|w^k_{r_k}\|_{\LL(\p B_1)}=1$, we infer that
\begin{equation}
 \label{eq:upp}
 \begin{split}
  \|\nabla w_{r_k}^k\|_{\LL(B_1)}^2
&\leq C r^{\beta/2}_k + \frac{3}{2}.
 \end{split}
\end{equation}
Thus, up to a subsequence $w^k_{r_k}\rightarrow w_0$ weakly in $W^{1,2}(B_1)$ and
strongly in both $L^2(B_1)$ and $L^2(\p B_1)$ for some $w_0\in W^{1,2}(B_1)$ with
$\|w_0\|_{\tilde{L}^2(\p B_1)}= 1$.
Since $w^k_{r_k}$ solves
\begin{align*}
\Delta w^k_{r_k}=\frac{r_k}{\|w^k\|_{\tilde{L}^2(\p B_{r_k})}}\p_iG^i_k(r_k\cdot) &\text{ in } B_{1/r_k}^\pm,\\
w^k_{r_k}\geq 0, \quad \p_{\nu_+}w^k_{r_k}+\p_{\nu_-}w^k_{r_k}\geq 0, \quad w^k_{r_k}(\p_{\nu_+}w^k_{r_k}+\p_{\nu_-}w^k_{r_k})=0 &\text{ on } B'_{1/r_k},
\end{align*}
and since $\|w^k\|_{\tilde{L}^2(\p B_{r_k})}\geq r^{\frac{3}{2}+\epsilon_0}$ with $\epsilon_0\in (0,\beta/4)$
and $|G^i_k(x)|\leq C|x|^\alpha$ and $\alpha\in (1/2,1)$ by \eqref{eq:G_i},
the inhomogeneities $\frac{r_k}{\|w^k\|_{\tilde{L}^2(\p B_{r_k})}}G^i_k(r_k\cdot)$
are uniformly bounded in $C^{0,\alpha}(B_1)$ and vanish in the limit as
$k\rightarrow \infty$.
By the interior H\"older regularity (c.f. Proposition~\ref{prop:Hoelder}) we furthermore obtain that
$w^k_{r_k}\in C^{1,1/2-\epsilon}_{loc}(B_{1}^\pm)$. Hence,
up to a subsequence $w^k_{r_k}\rightarrow w_0$ in $C^{1,1/2-\epsilon}_{loc}(B_1^\pm)$, where $w_0$ is
a solution to the homogeneous, constant coefficient thin obstacle problem for the Laplacian
in $B_1$. Moreover, the the almost optimal regularity result combined with the assumption that $0\in \Gamma_{w_k}$ yields that $0\in \Gamma_{w_0}$. \\
We seek to show that $w_0\in \mathcal{E}\setminus \{0\}$.
Indeed, on the one hand, by weak lower semi-continuity,
the convergence $r_k \rightarrow 0$ and by (\ref{eq:upp})
we infer
\begin{align*}
\int_{B_1}|\nabla w_0|^2 dx
\leq \liminf _{k\rightarrow\infty}\int_{B_1}|\nabla w^k_{r_k}|^2 dx
\leq \frac{3}{2}.
\end{align*}
On the other hand, the lower bound for the frequency function (associated with the zero thin obstacle problem for the Laplacian at $0\in \Gamma_{w_0}$) yields
\begin{align*}
\frac{3}{2}\leq N(1,w_0)=\int_{B_1}|\nabla w_0|^2 dx.
\end{align*}
Hence, $N(1,w_0)=\frac{3}{2}$ and therefore the monotonicity of the frequency function
(for the thin obstacle problem with $a^{ij}=\delta^{ij}$)
implies
\begin{align*}
 N(r,w_0) = \frac{3}{2} \mbox{ for all } r\in(0,1).
\end{align*}
As a consequence, $w_0$ is homogeneous of degree $3/2$. By the
classification of homogeneous solutions, this results in the identity
$w_0(x)=c_n \Ree(x_n+i|x_{n+1}|)^{3/2}$ up to a rotation, i.e. $w_0\in \mathcal{E}\setminus\{0\}$.
This however is a contradiction to the strong $L^2$ convergence of $w^k_{r_k}$ to $w_0$
and to the assumption that
$\|w^k_{r_k}-\overline{\Pr}(w^k_{r_k},1)\|_{\LL(\p B_1)}>\mu_0$ for all $k$.
\end{proof}

Heading towards a convergence rate for the decay of a solution of the thin obstacle problem towards $\mathcal{E}$, we next establish the following lemma, which controls the Weiss energy of the difference of a solution and an arbitrary (but fixed) element of $\mathcal{E}$ in terms of the Weiss energy of the given solution.

\begin{lem}
\label{lem:orth_comp}
Let $w\in W^{1,2}(B_1)$ with $w\geq 0$ on $B'_1$. Then for any function $p_{3/2} \in \mathcal{E}$, we have
\begin{align}
\label{eq:proj}
W(r,w-p_{3/2}) \leq W(r,w), \quad r\in (0,1).
\end{align}
\end{lem}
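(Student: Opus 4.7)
The plan is to expand $W(r, w - p_{3/2})$ and reduce the inequality to two separate facts about the profile $p_{3/2} \in \mathcal{E}$: that $W(r, p_{3/2}) = 0$, and that the mixed term is controlled by a boundary integral which cancels the remaining boundary contribution.

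First I would write
\begin{align*}
W(r, w - p_{3/2}) - W(r, w)
&= W(r, p_{3/2}) - \frac{2}{r^{n+2}} \int_{B_r} \nabla w \cdot \nabla p_{3/2} \, dx
+ \frac{3}{r^{n+3}} \int_{\partial B_r} w\, p_{3/2} \, d\mathcal{H}^n,
\end{align*}
so the job is to show the right-hand side is $\leq 0$. Write $p_{3/2}(x) = c\,\mathrm{Re}((x' \cdot \xi) + i|x_{n+1}|)^{3/2}$. A direct complex-derivative calculation on $\{\pm x_{n+1} > 0\}$ (identical to the one behind the classical $h_{3/2}$) shows that $p_{3/2}$ is harmonic in $B_1^\pm$, is $3/2$-homogeneous, satisfies $p_{3/2} \geq 0$ on $B_1'$ with $p_{3/2} = c(x' \cdot \xi)_+^{3/2}$, and on $B_1'$
\[
\partial_{\nu_+} p_{3/2} + \partial_{\nu_-} p_{3/2} = 3c\,(x' \cdot \xi)_-^{1/2} \geq 0,
\]
with complementarity $p_{3/2}(\partial_{\nu_+} p_{3/2} + \partial_{\nu_-} p_{3/2}) = 0$.

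Next I would compute $W(r, p_{3/2})$. Splitting $B_r = B_r^+ \cup B_r^-$ and integrating by parts in each half (using $\Delta p_{3/2} = 0$ in $B_r^\pm$) gives
\[
\int_{B_r} |\nabla p_{3/2}|^2 \, dx
= \int_{\partial B_r} p_{3/2}\, \partial_\nu p_{3/2} \, d\mathcal{H}^n
+ \int_{B_r'} p_{3/2}(\partial_{\nu_+} p_{3/2} + \partial_{\nu_-} p_{3/2}) \, dx'.
\]
The second integrand vanishes by complementarity, while the homogeneity of $p_{3/2}$ gives $\partial_\nu p_{3/2} = \frac{3}{2r} p_{3/2}$ on $\partial B_r$, yielding $\int_{B_r}|\nabla p_{3/2}|^2 = \frac{3}{2r}\int_{\partial B_r} p_{3/2}^2$ and hence $W(r, p_{3/2}) = 0$.

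Finally I would run the same integration by parts for the cross term, now pairing $w$ against $p_{3/2}$:
\[
\int_{B_r} \nabla w \cdot \nabla p_{3/2} \, dx
= \frac{3}{2r}\int_{\partial B_r} w\, p_{3/2} \, d\mathcal{H}^n
+ \int_{B_r'} w (\partial_{\nu_+} p_{3/2} + \partial_{\nu_-} p_{3/2}) \, dx'.
\]
Because $w \geq 0$ on $B_1'$ and $\partial_{\nu_+} p_{3/2} + \partial_{\nu_-} p_{3/2} \geq 0$, the boundary integral on $B_r'$ is nonnegative. Inserting into the identity for $W(r, w - p_{3/2}) - W(r, w)$, the $\frac{3}{r^{n+3}} \int_{\partial B_r} w\, p_{3/2}$ term cancels exactly against the contribution $-\frac{2}{r^{n+2}} \cdot \frac{3}{2r}\int_{\partial B_r} w\, p_{3/2}$ from the cross term, leaving
\[
W(r, w - p_{3/2}) - W(r, w)
= -\frac{2}{r^{n+2}} \int_{B_r'} w (\partial_{\nu_+} p_{3/2} + \partial_{\nu_-} p_{3/2}) \, dx' \leq 0.
\]
The only mildly subtle step is justifying the integration by parts across the mixed boundary; $w$ lies in $W^{1,2}(B_1)$ and $p_{3/2}$ is $C^\infty$ on each $\overline{B_r^\pm} \setminus \{x'\cdot\xi = 0, x_{n+1} = 0\}$ with bounded gradient, so the computation is standard after a mollification or by working on $\{|x_{n+1}| > \delta\}$ and passing to the limit $\delta \to 0_+$.
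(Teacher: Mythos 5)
Your proof is correct and takes essentially the same route as the paper: expand the Weiss energy, use $W(r,p_{3/2})=0$, and reduce the cross term via Euler's identity and the sign of the co-normal jump of $p_{3/2}$. The only difference is presentational — you carry out the integration by parts explicitly on the half-balls $B_r^\pm$ and verify $W(r,p_{3/2})=0$ directly, whereas the paper packages the same facts into the single distributional statement $\Delta p_{3/2}\leq 0$ with $\supp(\Delta p_{3/2})\subset B_1'$ and invokes $W(r,p_{3/2})\equiv 0$ without proof.
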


\begin{proof}
The proof follows from a direct computation:
\begin{align*}
W(r,w-p_{3/2})&=W(r,w)+W(r,p_{3/2})\\
&-2\left(\frac{1}{r^{n+2}}\int_{B_r}\nabla w\cdot \nabla p_{3/2} dx
-\frac{3}{2r^{n+3}}\int_{\p B_r} w p_{3/2} \dH \right).
\end{align*}
By the Euler identity,
$\p_\nu p_{3/2}= \frac{x}{|x|}\cdot \nabla p_{3/2}=\frac{3}{2r}p_{3/2}$ on $\p B_r$,
and the facts that $\Delta p_{3/2}\leq 0$ in $B_1$,
$\supp (\Delta p_{3/2})\subset B'_1$ and $w\geq 0$ on $B'_1$, we have
\begin{align*}
\frac{1}{r^{n+2}}\int_{B_r}\nabla w\cdot \nabla p_{3/2}dx
-\frac{3}{2r^{n+3}}\int_{\p B_r} w p_{3/2}\dH =-\frac{1}{r^{n+2}}\int_{B_r} w\Delta p_{3/2} dx\geq 0.
\end{align*}
This combined with $ W(r,p_{3/2})\equiv 0$ yields \eqref{eq:proj}.
\end{proof}

Having established these facts on the Weiss energy, as the key point of this section we address the convergence rate of solutions to the variable coefficient thin obstacle problem to the space $\mathcal{E}$:

\begin{lem}\label{lem:negative}
Under the assumptions of Lemma~\ref{lem:blow_up}, for all constants
$\epsilon_0 \in (0,\beta/8)$
and for all $r\in (0,1/2)$ it holds
\begin{align*}
\|w-\overline{\Pr}(w,r)\|_{\LL(\p B_r)}
\leq C_0  r^{\epsilon_0}\max\{\|w\|_{\tilde{L}^2(\p B_r)}, r^{3/2}\},
\end{align*}
where
$C_0=C_0(n,\| a^{ij}\|_{C^{0,\alpha}(B_1)},\| g^{i}\|_{C^{0,\alpha}(B_1)})>0$.
\end{lem}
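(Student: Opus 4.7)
The proof proceeds by contradiction, using a two-step blow-up that combines the Weiss energy decay from Corollary~\ref{cor:perturb_epi}, the qualitative closeness to $\mathcal{E}$ from Lemma~\ref{lem:blow_up}, the energy comparison $W(r,w-p)\le W(r,w)$ for $p\in\mathcal{E}$ (Lemma~\ref{lem:orth_comp}), and the linear classification in Lemma~\ref{lem:class}. Suppose the conclusion fails; then there exist sequences of solutions $w^k$ and radii $r_k\in(0,1/2)$ with
\[
\|w^k-\overline{\Pr}(w^k,r_k)\|_{\LL(\p B_{r_k})}>kr_k^{\epsilon_0}\max\{\|w^k\|_{\LL(\p B_{r_k})},r_k^{3/2}\}.
\]
Using contractivity of the projection onto the convex cone $\mathcal{E}$ containing $0$ (when $\|w^k\|_{\LL(\p B_{r_k})}\le r_k^{3/2}$) together with the assumption, one deduces that in either sub-regime $c_k:=\|w^k\|_{\LL(\p B_{r_k})}\gtrsim r_k^{3/2+\epsilon_0}$. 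Thus Lemma~\ref{lem:blow_up} applies, and combined with the contradiction assumption forces $r_k\to 0$.

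\emph{First blow-up.} Set $\hat w^k(x):=w^k(r_kx)/c_k$ and $\hat p_k(x):=p_k(r_kx)/c_k$ with $p_k:=\overline{\Pr}(w^k,r_k)$. By scale invariance of $\mathcal{E}$, $\hat p_k=\overline{\Pr}(\hat w^k,1)\in\mathcal{E}$, and $\|\hat w^k\|_{\LL(\p B_1)}=1$. The scaling identity $W(1,\hat w^k)=(r_k^3/c_k^2)W(r_k,w^k)$ together with $c_k\gtrsim r_k^{3/2+\epsilon_0}$ and $W(r_k,w^k)\le Cr_k^\beta$ from Corollary~\ref{cor:perturb_epi} give $W(1,\hat w^k)\le Cr_k^{\beta-2\epsilon_0}\to 0$; the rescaled inhomogeneity $\hat G^i_k=(r_k/c_k)G^i_k(r_k\,\cdot)$ has $L^\infty$-norm $\lesssim r_k^{\alpha-1/2-\epsilon_0}\to 0$ (using $\alpha>1/2$). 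Hence $\hat w^k\to u_\infty$ in $L^2(B_1)\cap L^2(\p B_1)$, with $u_\infty$ a solution of the constant coefficient thin obstacle problem satisfying $W(1,u_\infty)\le 0$, $\|u_\infty\|_{\LL(\p B_1)}=1$ and $0\in\Gamma_{u_\infty}$. Weiss monotonicity and the frequency lower bound $N\ge 3/2$ at free boundary points force $u_\infty\in\mathcal{E}$; by continuity of the projection $\hat p_k\to u_\infty$.

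\emph{Second blow-up.} Put $d_k:=\|\hat w^k-\hat p_k\|_{\LL(\p B_1)}>kr_k^{\epsilon_0}$ and $v^k:=(\hat w^k-\hat p_k)/d_k$, so $\|v^k\|_{\LL(\p B_1)}=1$ and $\overline{\Pr}(v^k,1)=0$ by the cone analogue of Remark~\ref{rmk:orthogonal}. Applying Lemma~\ref{lem:orth_comp},
\[
W(1,v^k)=\frac{W(1,\hat w^k-\hat p_k)}{d_k^2}\le\frac{W(1,\hat w^k)}{d_k^2}\le\frac{Cr_k^{\beta-2\epsilon_0}}{k^2r_k^{2\epsilon_0}}\longrightarrow 0
\]
for $\epsilon_0<\beta/8$. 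Hence $v^k$ is uniformly $W^{1,2}$-bounded and $v^k\to v_\infty$ in $L^2$ with $\|v_\infty\|_{\LL(\p B_1)}=1$, $\overline{\Pr}(v_\infty,1)=0$, $W(1,v_\infty)\le 0$. Linearizing around $u_\infty$ (WLOG a positive multiple of $h_{3/2}$ after rotation) and matching the contact sets of $\hat w^k$ and $\hat p_k$ in the limit, $v_\infty$ solves the linear mixed boundary value problem of Lemma~\ref{lem:class}; consequently $v_\infty=\sum_j a_jq_j$ with $q_j$ a $j/2$-homogeneous solution, pairwise orthogonal in $L^2(B_1)$, $L^2(\p B_1)$ and $W^{1,2}(B_1)$.

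\emph{Contradiction and main obstacle.} The orthogonal expansion yields $W(1,v_\infty)=\sum_j a_j^2(j/2-3/2)\|q_j\|_{L^2(\p B_1)}^2\le 0$; the constraint $\overline{\Pr}(v_\infty,1)=0$ annihilates the $j=3$ components of $v_\infty$ along the tangent directions of $\mathcal{E}$ at $u_\infty$; the $W^{1,2}$-regularity of $v_\infty$, together with the last part of Lemma~\ref{lem:class}, rules out the $j=1$ mode. Together these constraints force $v_\infty\equiv 0$, contradicting $\|v_\infty\|_{\LL(\p B_1)}=1$. The hard part is this final step: correctly identifying that $v_\infty$ solves the linear problem of Lemma~\ref{lem:class} (requiring a careful analysis of the convergence of the complementary Signorini boundary conditions across the possibly mismatching contact sets of $\hat w^k$ and $\hat p_k$), and then ruling out every allowed homogeneity in the expansion via the combination of $\overline{\Pr}(v_\infty,1)=0$, $W(1,v_\infty)\le 0$, and the regularity.
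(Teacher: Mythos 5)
Your proposal reproduces the general contradiction-compactness shape of the paper's argument (rescale, extract a limit, identify it as a solution of the linear problem of Lemma~\ref{lem:class}, rule out all allowed modes), and the bookkeeping on the Weiss energy via Corollary~\ref{cor:perturb_epi} and Lemma~\ref{lem:orth_comp} is essentially correct. However, there is a genuine gap in the final exclusion step, and it traces back to an omission earlier on.

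The paper does not merely pick a bad sequence of radii; it makes a \emph{maximal} choice of $r_k$ as in \eqref{eq:negative_contra3}, which has the effect that the contradiction bound is attained at $r_k$ and the reverse bound holds on the larger scales $[r_k,1)\cap I_k$. This maximality is what drives Step~2a of the paper, namely the decay estimate $\|v_0-\overline{\Pr}(v_0,r)\|_{\LL(\p B_r)}\leq Cr^{3/2-\epsilon}$ for the limit $v_0$. Inserting this into the orthogonal expansion $v_0=\sum_j a_j q_j$ from Lemma~\ref{lem:class} (where $\|q_j\|^2_{\LL(\p B_r)}\sim r^j$) kills the modes with $j<3$: in particular it eliminates not only the $j=1$ mode $p_{1/2}$ but also the $j=2$ mode $x_{n+1}$. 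Your proposal makes no maximal choice of $r_k$, so you do not have this decay estimate at your disposal, and you attempt to substitute the combination $W(1,v_\infty)\le 0$, $\overline{\Pr}(v_\infty,1)=0$, and $a_1=0$ (from $W^{2,2}$ regularity). This set of constraints is not strong enough. Since
\begin{align*}
W(1,v_\infty)=\sum_j a_j^2\Bigl(\tfrac{j}{2}-\tfrac{3}{2}\Bigr)\|q_j\|_{L^2(\p B_1)}^2,
\end{align*}
the only strictly negative contribution after setting $a_1=0$ comes from the $j=2$ mode $q_2=x_{n+1}$, and this mode cannot be excluded by your constraints: $x_{n+1}$ is odd in $x_{n+1}$ and hence orthogonal in $L^2(\p B_1)$ to every element of $\mathcal{E}$ (all of which are even in $x_{n+1}$), so $\overline{\Pr}(x_{n+1},1)=0$; moreover $W(1,x_{n+1})<0$. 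Thus $v_\infty=c\,x_{n+1}+\text{(higher modes)}$ with $c\neq 0$ is perfectly consistent with $W(1,v_\infty)\le 0$, $\overline{\Pr}(v_\infty,1)=0$ and $a_1=0$, yet nonzero. In short: the inequality $W(1,v_\infty)\le 0$ \emph{permits} negative contributions from $j=2$, it does not forbid them, so your final ``all modes are ruled out'' step fails. You need the quantitative decay from Step~2a (and hence the maximal choice of $r_k$) to exclude $j=1,2$, together with the $\|\nabla v_0\|_{L^2}^2\le 3/2$ bound from weak lower semicontinuity to pin down the remaining $j\geq 3$ contributions to exactly a single $3/2$-homogeneous profile.

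A secondary, smaller gap: the statement ``the constraint $\overline{\Pr}(v_\infty,1)=0$ annihilates the $j=3$ components along the tangent directions of $\mathcal{E}$'' conflates the first-order (tangent-cone) orthogonality that one gets from the fact that $\overline{\Pr}(w^k_{r_k},1)$ is nondegenerate (which the paper exploits via the explicit rotation/variational argument in Step~2b) with a property of the projection of the limit $v_\infty$ alone. The cone-projection $\overline{\Pr}(v_\infty,1)=0$ only gives $\langle v_\infty,p\rangle\le 0$ for all $p\in\mathcal{E}$, which by itself does not kill the tangential $j=3$ modes $x_ip_{1/2}$; the paper's sign argument with rotations $\nu_k\to e_n$ and the nontriviality of the projection of $w^k_{r_k}$ are used precisely for this purpose.
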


\begin{proof}
\emph{Step 1:}
We first observe that if $\|w\|_{\tilde{L}^2(\p B_r)}\leq r^{3/2+\epsilon_0}$,
the estimate follows directly from
$\|w-\overline{\Pr}(w,r)\|_{\tilde{L}^2(\p B_r)}\leq \|w\|_{\tilde{L}^2(\p B_r)}
\leq r^{3/2+\epsilon_0}$. \\
Thus, we will only focus on the case, in which
$\|w\|_{\tilde{L}^2(\p B_r)}> r^{3/2+\epsilon_0}$.
\\

We argue by contradiction and suppose the estimate were not true. Then there
existed a value of $\epsilon_0 \in (0,\beta/8)$, a
sequence of radii $r_k$ and a sequence of solutions $w^k$ with $\|w^k\|_{\LL(B_1)}=1$, $0\in \Gamma_{w^k}$,
and $\|w^k\|_{\tilde{L}^2(\p B_{r_k})}>  r_k^{3/2+\epsilon_0}$,
such that
\begin{align*}
\|w^k-\overline{\Pr}(w^k,r_k)\|_{\tilde{L}^2(\p B_{r_k})}\geq k  r_k^{\epsilon_0}h_k(r_k),
\end{align*}
where $h_k(r):= \max\{\|w^{k}\|_{\tilde{L}^2(\p B_{r})}, r^{3/2}\}$.
By Lemma~\ref{lem:blow_up} and the fact that $h_k(r_k)\geq  \|w^k\|_{\tilde{L}^2(\p B_{r_k})}$ necessarily $r_k\rightarrow 0_+$.
Moreover, we can choose $r_k \rightarrow 0$ and $C_k\rightarrow \infty$ such that
\begin{equation}\label{eq:negative_contra3}
\begin{split}
&\|w^k-\overline{\Pr}(w^k,r)\|_{\tilde{L}^2(\p B_r)}\leq C_k  r_k^{\epsilon_0}h_k(r)
\quad \mbox{ for all } r\in (0,r_k]\cap I_k , \\
&\|w^k-\overline{\Pr}(w^k,r)\|_{\tilde{L}^2(\p B_r)}\leq C_k  r^{\epsilon_0}h_k(r)
\quad \mbox{ for all } r\in  [r_k,1)\cap I_k,\\
&\|w^k-\overline{\Pr}(w^k,r_k)\|_{\tilde{L}^2(\p B_{r_k})}= C_k r^{\epsilon_0}_k h_k(r_k),\\
&\text{ where } I_k:=\{r\in (0,1]:\|w^k\|_{\tilde{L}^2(\p B_r)}\geq r^{3/2+\epsilon_0}\} .
\end{split}
\end{equation}
(For example, given $k$ assuming $I_k$ is nonempty, we let $\widetilde{r_k}$ be the maximal $r\in I_k$ such that $f_k(r):=\|w^k-\overline{\Pr}(w^k,r)\|_{\tilde{L}^2(\p B_r)}/h_k(r)\geq kr^{\epsilon_0}$. Such a sequence of radii $\widetilde{r_k}$ exists by the contradiction assumption. Then we define $r_k\in [0,\widetilde{r_k}]$ such that $f_k(r_k)=\max_{r\in [0,\widetilde{r_k}]\cap I_k} f_k(r)$. Note that $r_k\neq 0$, since $\lim_{r\in I_k, r\rightarrow 0_+}f_k(r)=0$ by Lemma~\ref{lem:blow_up} and the definition of $h_k$. Let $C_k:=f_k(r_k)/r_k^{\epsilon_0}$. It is not hard to verify that \eqref{eq:negative_contra3} holds with such a choice of $r_k$ and $C_k$.)
Let
$$v_k(x):=\frac{w^k_{r_k}(x)-\overline{\Pr}(w^k_{r_k},1)(x)}
{\|w^k_{r_k}-\overline{\Pr}(w^k_{r_k},1)\|_{\LL(\p B_1)}},\quad w^k_{r_k}(x):=\frac{w^k(r_kx)}{\|w^k\|_{\tilde{L}^2(\p B_{r_k})}}.$$
In particular, $\|v_k\|_{\tilde{L}^2(\p B_1)}=1$. We invoke \eqref{eq:bound_W} and Lemma~\ref{lem:orth_comp} in combination with $\|w\|_{L^2(B_1)}=1$
to observe that
\begin{align*}
 W(1,v_k) \leq \frac{W(1,w_{r_k}^k)}{ \|w^k_{r_k} - \overline{\Pr}(w_{r_k}^k,1)\|_{\LL(\p B_1)}^2}
 \leq  \frac{Cr^{\beta/2}_k}{\|w^k_{r_k} - \overline{\Pr}(w_{r_k}^k,1)\|_{\LL(\p B_1)}^2}.
 \end{align*}
Equation (\ref{eq:negative_contra3}) and the choice of $\epsilon_0\in (0,\beta/8)$ thus yield
\begin{align*}
 W(1,v_k)  \leq C r^{\beta/4}_k.
 \end{align*}
Consequently,
\begin{align*}
\|\nabla v_k\|_{L^2(B_1)}^2
&\leq W(1,v_k) + \frac{3}{2}\|v_k\|_{L^2(\p B_1)}^2
\leq C_{\beta} r^{\beta/4}_k + \frac{3}{2}.
 \end{align*}
Then up to a subsequence $v_k\rightarrow v_0$ weakly in $W^{1,2}(B_1)$ and strongly both
in $L^2(B_1)$ and $L^2(\p B_1)$.
Moreover,
the limit $v_0$ solves (up to a rotation)
\begin{equation}
\label{eq:linear}
 \begin{split}
\Delta v_0&=0 \text{ in } B_1^+\cup B_1^-,\\
\p_{n+1}^+v_0 + \p_{n+1}^-v_0&=0 \text{ on } B_1' \cap \{x_n>0\},\\
v_0&=0 \text{ on } B_1'\cap \{x_n< 0\}.
\end{split}
\end{equation}
Indeed, by Lemma~\ref{lem:blow_up} $w^k_{r_k}\rightarrow c_n p^{\nu}_{3/2}$ in $C^{1,\beta}_{loc}(B_1^\pm)$
and after a rotation we may assume $\nu=e_n$. Thus for each $t>0$ there exists
$k_t\in \N$, such that for $k\geq k_t$ the functions $v_k(Q_kx', x_{n+1})$, where $Q_k\in SO(n)$ (which is chosen to be the rotation associated with $\Pr(w_{r_k}^k,1)$), satisfy
\begin{equation}\label{eq:linear_2}
\begin{split}
\Delta v_k&= \frac{r_k\p_i G^i_k(r_k\cdot)}{\|w^k\|_{\tilde{L}^2(\p B_{r_k})}\|w^k_{r_k}-\overline{\Pr}(w^k_{r_k},1)\|_{\tilde{L}^2(\p B_1)}} \text{ in } B_1^\pm,\\
\p_{n+1}^+v_k + \p_{n+1}^-v_k&=0 \text{ on } B_1'\cap \{x_n>t\},\\
v_k&=0 \text{ on } B_1' \cap \{x_n< -t\}.
\end{split}
\end{equation}
Here $G^i_k$ is as in \eqref{eq:G_i} and we
have exploited the fact that $\Delta \overline{\Pr}(w_{r_k}^k, 1)=0$ in $\R^{n+1}_\pm$.
Using \eqref{eq:G_i2}, $\|w^k\|_{\tilde{L}^2(\p B_{r_k})}\geq r_k^{3/2+\epsilon_0}$ with $\epsilon_0\leq \frac{1}{2}(\alpha-\frac{1}{2})$ and the choice of $r_k$ in \eqref{eq:negative_contra3},
we infer that the inhomogeneity vanishes in the limit as $k\rightarrow \infty$.
Passing to the limit $k\rightarrow \infty$ hence entails \eqref{eq:linear}.\\

In the next step we will show that the limiting function $v_0$ is an element
of $\mathcal{E}\setminus\{0\}$.
However this is impossible, since at each step we have subtracted the projection of
$v_k$ onto $\mathcal{E}$.
This therefore completes the contradiction argument. \\

\emph{Step 2:} We show that $v_0\in \mathcal{E}\setminus\{0\}$. \\
\emph{Step 2a:} First we derive that for any $r\in (0,1/2)$,
\begin{align*}
\|v_0-\overline{\Pr}(v_0,r)\|_{\tilde{L}^2(\p B_r)}\leq C r^{3/2-\epsilon}
\end{align*}
for some small $\epsilon\in (0,1/2)$ fixed.
To show this, it suffices to deduce that for any $r\in (0,1/2)$ and for sufficiently large values of $k$ (which might depend on $r$),
\begin{align}\label{eq:growth_2}
\frac{\|w^k_{r_k}-\overline{\Pr}(w^k_{r_k},r)\|_{\tilde{L}^2(\p B_{r})}}{\|w^k_{r_k}-\overline{\Pr}(w^k_{r_k},1)\|_{L^2(\p B_1)}}\leq C r^{3/2-\epsilon}.
\end{align}
We distinguish two cases:\\
\emph{Case a:} If $r_kr$ is non-degenerate in the sense that
$\|w^k\|_{\tilde{L}^2(\p B_{r_kr})}\geq (r_kr)^{3/2+\epsilon_0}$, (due to the choice of $r_k$ in \eqref{eq:negative_contra3}) we have
\begin{align}\label{eq:growth_3}
\frac{\|w^k_{r_k}-\overline{\Pr}(w^k_{r_k},r)\|_{\tilde{L}^2(\p B_{r})}}{\|w^k_{r_k}-\overline{\Pr}(w^k_{r_k},1)\|_{L^2(\p B_1)}}=\frac{\|w^k-\overline{\Pr}(w^k,rr_k)\|_{\tilde{L}^2(\p B_{rr_k})}}{\|w^k-\overline{\Pr}(w^k,r_k)\|_{L^2(\p B_{r_k})}}\leq \frac{h_k(r_kr)}{h_k(r_k)}.
\end{align}
With this at hand, using the almost optimal regularity
$\|w^k_{r_k}\|_{\tilde{L}^2(\p B_{r})}\leq C_\epsilon r^{3/2-\epsilon}$, it is not hard
to see that $$\frac{h_k(r_kr)}{h_k(r_k)}\leq C_0r^{3/2-\epsilon}.$$
This entails \eqref{eq:growth_2}. \\
\emph{Case b:} If $r_kr$ is degenerate, i.e. if
$\|w^k\|_{\tilde{L}^2(\p B_{r_kr})}<(r_kr)^{3/2+\epsilon_0}$, we estimate
\begin{align*}
\|v_k-\overline{\Pr}(v_k,r)\|_{\tilde{L}^2(\p B_r)}=\frac{\|w^k-\overline{\Pr}(w^k,rr_k)\|_{\tilde{L}^2(\p B_{rr_k})}}{\|w^k-\overline{\Pr}(w^k,r_k)\|_{L^2(\p B_{r_k})}}\leq \frac{(r_kr)^{3/2+\epsilon_0}}{C_kr_k^{\epsilon_0}h_k(r_k)}.
\end{align*}
Using that $h_k(r_k)\geq  r_k^{3/2}$ yields
\begin{align*}
\|v_k-\overline{\Pr}(v_k,r)\|_{\tilde{L}^2(\p B_r)}
\leq \frac{(r_kr)^{3/2+\epsilon_0}}{C_kr_k^{\epsilon_0}r_k^{3/2}}
\leq  C_k^{-1}r^{3/2+\epsilon_0}.
\end{align*}

\emph{Step 2b:} We determine the leading order asymptotic profile of $v_0$ and show
that it is two-dimensional.
By Step 3a and by Lemma \ref{lem:class} (c.f. also Remark \ref{rmk:class}), which characterizes solutions to the
linear problem (\ref{eq:linear}),
\begin{align*}
v_0(x)=\sum_{i=1}^{n-1}c_ix_i p_{1/2}(x_n,x_{n+1})+ c_n p_{3/2}(x_n,x_{n+1})+O(|x|^2).
\end{align*}
We seek to show that $c_i=0$ for $i\in \{1,\cdots, n-1\}$. To this end we make use of minimality. Let $p^{\nu_k}_{3/2}(x)=c_k p_{3/2}(x'\cdot\nu_k, x_{n+1})$, where $\R^n\ni\nu_k\rightarrow e_n\in \R^n$ as $k\rightarrow \infty$. Moreover, after a rotation of coordinates, we can always assume that for each $k$, $\overline{\Pr}(w^k_{r_k},1)=d_k p_{3/2}^{e_n}$. Since
\begin{align*}
\|v_k-c_kp^{e_n}_{3/2}\|_{L^2(\p B_1)}\leq \|v_k-c_kp^{\nu_k}_{3/2}\|_{L^2(\p B_1)},
\end{align*}
it follows that
\begin{align*}
\int_{\p B_1} -2v_k (p^{e_n}_{3/2}-p^{\nu_k}_{3/2}) \dH
\leq \int_{\p B_1}(p^{\nu_k}_{3/2})^2 - (p^{e_n}_{3/2})^2 \dH =0.
\end{align*}
Let $\nu_k=e_i\sin\theta_k+e_n\cos\theta_k$, $i\in \{1,\cdots, n-1\}$,
for some $\theta_k \in (0,1)$ and $\theta_k\rightarrow 0_+$ as $k\rightarrow \infty$.
For such $\nu_k$ we have $(p^{\nu_k}-p^{e_n})/\theta_k\rightarrow \frac{3}{2}x_ip_{1/2}$.
Thus letting $k\rightarrow \infty $ and using the orthogonality of the expansion from Lemma \ref{lem:class}, we obtain that
\begin{align*}
\int_{\p B_1} 3c_i (x_ip_{1/2})^2 \dH \leq 0, \quad i\in \{1,\cdots, n-1\}.
\end{align*}
This implies that $c_i\leq 0$. By considering $\theta_k<0$ and $\theta_k\rightarrow 0_-$,
similar arguments lead to $c_i\geq 0$. Thus $c_i=0$ for $i\in \{1,\cdots, n-1\}$. \\

\emph{Step 2c:} We show that $v_0\in \mathcal{E}\setminus\{0\}$.
On the one hand Steps 3a and 3b
yield that $\|\nabla v_0\|_{L^2(B_1)}^2\geq \frac{3}{2}$.
Indeed, by Lemma \ref{lem:class} (and by the orthogonality of the decomposition)
it suffices to compute $\|\nabla q_k\|_{L^2(B_1)}^2$ for all $k$ individually. To this end,
we note that assuming that if $q_k$ is $L^2(\partial B_1)$ normalized, it holds that
\begin{align*}
 N(1,q_k) = \|\nabla q_k\|_{L^2(B_1)}^2 =k/2,
\end{align*}
where $k/2$ is the homogeneity of $q_k$ and $N(r,\cdot)$ denotes the
frequency function (with the same definition as for the constant coefficient thin obstacle problem).
This follows from the observation that the relevant frequency
function identities remain valid for solutions to the linear problem (\ref{eq:problem}) (if these are $C^{1}$ regular, which is ensured in our situation by the result from Step 2a).
By the normalization of
$v_0$ and the representation from Step 2b this then however implies that
\begin{align*}
 \|\nabla v_0\|_{L^2(B_1)}^2
 = \sum\limits_{k=3}^{\infty} |a_k|^2 \|\nabla q_k\|_{L^2(B_1)}^2
 \geq 3/2 \sum\limits_{k=3}^{\infty} |a_k|^2  = 3/2.
\end{align*}
On the other hand,
since $v_k\rightarrow v_0$ weakly in $W^{1,2}(B_1)$, we obtain
$\|\nabla v_0\|_{L^2(B_1)}^2\leq \liminf_{k\rightarrow \infty}\|\nabla v_k\|_{L^2(B_1)}^2
\leq \frac{3}{2}$. Thus, $\|\nabla v_0\|_{L^2(B_1)}^2=\frac{3}{2}$. This,
together with $\|v_0\|_{L^2(\p B_1)}=1$
and Step 3b, gives that $v_0(x)=c_np_{3/2}(x_n,x_{n+1})\in \mathcal{E}\setminus\{0\}$.
\end{proof}

The decay of our solutions towards $\mathcal{E}$ further implies control on the projections onto $\mathcal{E}$:

\begin{cor}\label{cor:negative}
Under the assumptions of Lemma~\ref{lem:negative} we have
\begin{align*}
\|\overline{\Pr}(w,s)-\overline{\Pr}(w,r)\|_{\tilde{L}^2(B_r)}\leq C r^{3/2+\epsilon_0/2},\quad 0<s<r<1/2.
\end{align*}
\end{cor}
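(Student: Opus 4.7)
The plan is a telescoping argument at dyadic scales $r_j := 2^{-j}r$, $p_j := \overline{\Pr}(w, r_j)$, exploiting Lemma~\ref{lem:negative}, Proposition~\ref{prop:Hoelder}, and the $3/2$-homogeneity of elements of $\mathcal{E}$. The reduction from the target solid-ball norm to a spherical norm is immediate: since $p_r - p_s \in \mathcal{E} - \mathcal{E}$ is $3/2$-homogeneous, a direct integration in polar coordinates gives $\|p_r - p_s\|_{\tilde{L}^2(B_r)}^2 = \tfrac{n+1}{n+4}\|p_r - p_s\|_{\tilde{L}^2(\p B_r)}^2$, so it suffices to prove the bound on $\p B_r$.

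First I would sharpen Lemma~\ref{lem:negative} by combining it with the almost optimal estimate $\|w\|_{\tilde{L}^2(\p B_\rho)} \leq C\rho^{3/2-\epsilon}$ from Proposition~\ref{prop:Hoelder} (valid for any $\epsilon>0$ under the normalization $a_0=0$); choosing $\epsilon = \epsilon_0/2$ eliminates the maximum on the right-hand side and yields $\|w - p_j\|_{\tilde{L}^2(\p B_{r_j})} \leq C\, r_j^{3/2+\epsilon_0/2}$. Next, since $p_{j+1}$ minimizes $\|w-q\|_{\tilde{L}^2(\p B_{r_{j+1}})}$ over $q \in \mathcal{E}$, we have $\|w-p_{j+1}\|_{\tilde{L}^2(\p B_{r_{j+1}})} \leq \|w-p_j\|_{\tilde{L}^2(\p B_{r_{j+1}})}$, and the triangle inequality gives $\|p_j - p_{j+1}\|_{\tilde{L}^2(\p B_{r_{j+1}})} \leq 2\|w-p_j\|_{\tilde{L}^2(\p B_{r_{j+1}})}$.

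The main technical obstacle is transferring the spherical bound for $w - p_j$ from $\p B_{r_j}$ down to $\p B_{r_{j+1}}$: the crude triangle estimate $\|w\|+\|p_j\|$ at $\p B_{r_{j+1}}$ only gives $C r_j^{3/2-\epsilon}$, losing the gain $\epsilon_0/2$. My plan is to first upgrade to a solid-ball bound $\|w - p_j\|_{\tilde{L}^2(B_{r_j})} \leq C\, r_j^{3/2+\epsilon_0/2}$: for each $\rho \in (0, r_j]$, decompose $w - p_j = (w-p_\rho) + (p_\rho - p_j)$, bound the first summand by $C\rho^{3/2+\epsilon_0/2}$ via Lemma~\ref{lem:negative} at scale $\rho$, and control the second by a cone-Pythagoras argument in $\mathcal{E}$ that uses the $3/2$-homogeneity of its elements; integrating in $\rho$ then preserves the exponent. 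With the solid-ball bound in hand, the inclusion $B_{r_{j+1}} \subset B_{r_j}$ together with a Chebyshev-type selection of a good intermediate radius near $r_{j+1}$ delivers $\|w - p_j\|_{\tilde{L}^2(\p B_{r_{j+1}})} \leq C r_j^{3/2+\epsilon_0/2}$, and hence $\|p_j - p_{j+1}\|_{\tilde{L}^2(\p B_{r_{j+1}})} \leq C r_j^{3/2+\epsilon_0/2}$.

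Finally, since $p_j - p_{j+1}$ is $3/2$-homogeneous, the scaling identity
$$\|p_j - p_{j+1}\|_{\tilde{L}^2(\p B_r)} = (r/r_{j+1})^{3/2}\|p_j - p_{j+1}\|_{\tilde{L}^2(\p B_{r_{j+1}})} \leq C r^{3/2+\epsilon_0/2}\, 2^{-j\epsilon_0/2}$$
holds. Summing the convergent geometric series over $j = 0, \ldots, J-1$ where $r_J \leq s < r_{J-1}$, and handling the terminal piece $\|p_J - p_s\|_{\tilde{L}^2(\p B_r)}$ by an analogous triangle-inequality argument applied at scale $s$, completes the estimate on $\p B_r$ and therefore, by the first paragraph, the claim.
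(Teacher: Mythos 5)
Your outer architecture (dyadic telescoping of the projections, using $3/2$-homogeneity to rescale and sum a geometric series) matches the paper's, but there is a genuine circularity in the step you flag as the main technical one: the proof of the solid-ball bound $\|w-\overline{\Pr}(w,r)\|_{\tilde{L}^2(B_r)}\leq C r^{3/2+\epsilon_0/2}$. Your plan is to write, on each slice $\partial B_\rho$ with $\rho\le r_j$, $w-p_j=(w-p_\rho)+(p_\rho-p_j)$ and then control $p_\rho-p_j$ by a ``cone-Pythagoras'' inequality. But any such inequality available from minimality of the projection only yields
\[
\|p_\rho-p_j\|_{\tilde{L}^2(\partial B_\rho)}\le \|w-p_\rho\|_{\tilde{L}^2(\partial B_\rho)}+\|w-p_j\|_{\tilde{L}^2(\partial B_\rho)}\le 2\,\|w-p_j\|_{\tilde{L}^2(\partial B_\rho)},
\]
i.e.\ exactly the quantity you decomposed in the first place. (Note also that $\mathcal{E}$ is a non-convex cone, so the exact Pythagoras identity for convex projections is not available; but even that would only give $\|p_\rho-p_j\|\le\|w-p_j\|$ on $\partial B_\rho$, still circular.) Estimating $\|p_\rho-p_j\|$ for general $\rho<r_j$ is, up to rescaling, essentially the statement of the Corollary itself, so this route cannot bootstrap from Lemma~\ref{lem:negative} alone.

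The missing ingredient is the gradient control, which is how the paper passes from the spherical to the solid-ball bound without losing. Lemma~\ref{lem:orth_comp} gives $W(r,w-p_r)\le W(r,w)$, and Corollary~\ref{cor:perturb_epi} gives $W(r,w)\le C r^\beta$; unwinding the definition of the Weiss energy yields
\[
\|\nabla(w-p_r)\|_{\tilde{L}^2(B_r)}\lesssim r^{-1}\|w-p_r\|_{\tilde{L}^2(\partial B_r)}+r^{1/2+\beta/2},
\]
and feeding this, together with the spherical bound $\|w-p_r\|_{\tilde{L}^2(\partial B_r)}\le Cr^{3/2+\epsilon_0/2}$, into a Poincar\'e inequality of the type $\|u\|_{L^2(B_r)}\le Cr\|\nabla u\|_{L^2(B_r)}+Cr^{1/2}\|u\|_{L^2(\partial B_r)}$ gives the solid-ball estimate directly, in a single step, with no integration over $\rho$. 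Once this is in hand, the dyadic transfer is trivial: $\|w-p_m\|_{\tilde{L}^2(B_{r_{m+1}})}\le 2^{(n+1)/2}\|w-p_m\|_{\tilde{L}^2(B_{r_m})}$ is just the fixed volume ratio in the normalized norm, so the ``Chebyshev selection of a good radius'' you propose is unnecessary — working with solid-ball norms throughout, as the paper does, sidesteps the scale-transfer obstacle you were worried about. The rest of your telescoping and homogeneity bookkeeping is fine, but the proof as written has this one essential gap: you need the Weiss-energy gradient bound and Poincar\'e, not the circular decomposition in $\rho$.
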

\begin{proof}
We first claim that under the assumption of Lemma~\ref{lem:negative} it holds
\begin{align}\label{eq:solid}
\|w-\overline{\Pr}(w,r)\|_{\tilde{L}^2(B_r)}\leq C_0 r^{3/2+\epsilon_0/2} \mbox{ for all } r\in (0,1/2).
\end{align}
Indeed, by the almost optimal regularity of $w$, we have $\|w-\overline{\Pr}(w,r)\|_{\tilde{L}^2(\p B_r)}\leq C_0 r^{3/2+\epsilon_0/2}$. In order to obtain the estimate in the full ball $B_r$,
we apply Poincar\'{e}'s inequality (or the fundamental theorem of calculus) to the function $u(x):=w(x)-\overline{\Pr}(w,r)(x)$, i.e.
\begin{align*}
\|u\|_{L^2(B_r)}\leq C r\|\nabla u\|_{L^2(B_r)}+ C r^{1/2}\|u\|_{L^2(\p B_r)},
\end{align*}
and use that $\|\nabla (w-\overline{\Pr}(w,r))\|_{\LL(B_r)}\lesssim r^{-1}\|w-\overline{\Pr}(w,r)\|_{\LL(\p B_r)} + r^{1/2 + \beta}$ (c.f. \eqref{eq:proj} and $W(r,w)\leq C_n r^\beta$
from Corollary~\ref{cor:perturb_epi}). This gives \eqref{eq:solid}.

Next we show that for all $k,m\in \N$, $k\geq m\geq 1$,
\begin{align}
\label{eq:dyadic}
\|\overline{\Pr}(w,2^{-k})-\overline{\Pr}(w,2^{-m})\|_{\tilde{L}^2(B_{2^{-m}})}
\leq C (2^{-m})^{3/2+\epsilon_0/2}.
\end{align}
First if $k=m+1$, then by the homogeneity of functions in $\mathcal{E}$ and
by the triangle inequality we infer
\begin{align*}
&\|\overline{\Pr}(w,2^{-m-1})-\overline{\Pr}(w,2^{-m})\|_{\tilde{L}^2( B_{2^{-m}})}\\
&=2^{3/2}\|\overline{\Pr}(w,2^{-m-1})-\overline{\Pr}(w,2^{-m})\|_{\tilde{L}^2( B_{2^{-m-1}})}\\
&\leq 2^{3/2}\|w-\overline{\Pr}(w,2^{-m-1})\|_{\tilde{L}^2(B_{2^{-m-1}})}+2^{3/2}\|w-\overline{\Pr}(w,2^{-m})\|_{\tilde{L}^2(B_{2^{-m-1}})}\\
&\leq 2^{3/2}\|w-\overline{\Pr}(w,2^{-m-1})\|_{\tilde{L}^2(B_{2^{-m-1}})}+2^{3/2+(n+1)/2}\|w-\overline{\Pr}(w,2^{-m})\|_{\tilde{L}^2(B_{2^{-m}})},
\end{align*}
where in the last line we used $\|w-\overline{\Pr}(w,2^{-m})\|_{\tilde{L}^2(B_{2^{-m-1}})}\leq 2^{(n+1)/2}\|w-\overline{\Pr}(w,2^{-m})\|_{\tilde{L}^2(B_{2^{-m}})}$. Invoking the decay rate of $\|w-\overline{\Pr}(w,2^{-m})\|_{\tilde{L}^2(B_{2^{-m}})}$ from \eqref{eq:solid}, we have
\begin{align*}
\|\overline{\Pr}(w,2^{-m-1})-\overline{\Pr}(w,2^{-m})\|_{\tilde{L}^2( B_{2^{-m}})}\leq c_n(2^{-m})^{3/2+\epsilon_0/2}.
\end{align*}
This yields the claim of (\ref{eq:dyadic}) for $k=m+1$.\\
For $k>m+1$ we rely on a telescope sum argument:
\begin{align*}
&\|\overline{\Pr}(w,2^{-k})-\overline{\Pr}(w,2^{-m})\|_{\tilde{L}^2(B_{2^{-m}})}\\
&\leq \sum_{\ell=m}^{k-1}(2^{3/2})^{\ell-m+1}\|\overline{\Pr}(w,2^{-\ell-1})-\overline{\Pr}(w,2^{-\ell})\|_{\tilde{L}^2(B_{2^{-\ell}})}\\
&\leq C_n\sum_{\ell=m}^{k-1}(2^{-m})^{3/2}(2^{-\ell})^{\epsilon_0/2}\leq C_n(2^{-m})^{3/2+\epsilon_0/2}.
\end{align*}
Finally, by combining (\ref{eq:solid}) with (\ref{eq:dyadic}) and by applying the triangle
inequality, we conclude the proof of the lemma.
\end{proof}

Relying on the previous results (in particular on Lemma \ref{lem:negative}), we formulate the optimal growth behavior of solutions to the variable coefficient thin obstacle problem at free boundary points for $C^{0,\alpha}$ metrics in the case that $\alpha\in (1/2,1)$.

\begin{lem}[Optimal growth]
\label{lem:opt_growth}
Suppose that $w$ is a solution to the
variable coefficient thin obstacle problem~(\ref{eq:main2}) with $a^{ij}, g^i\in C^{0,\alpha}(B_1)$, $\alpha\in (1/2,1)$,
satisfying the normalization assumption (N). Moreover, assume that $\|w\|_{\LL(B_1)}= 1$.
Then there exists a constant $C>0$ depending on $n,\alpha$, $[a^{ij}]_{C^{0,\alpha}(B_1)}$
and $[g^i]_{C^{0,\alpha}(B_1)}$ such that for all $r\in(0,1/2)$ and for all
$x_0 \in \Gamma_w \cap B_{1/2}$
\begin{align*}
\|w\|_{\tilde{L}^2(B_r(x_0))} \leq C r^{3/2}.
\end{align*}
\end{lem}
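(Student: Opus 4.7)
The strategy is to combine the decay of Corollary~\ref{cor:negative} with a uniform upper bound on the amplitude of the $3/2$-homogeneous projection $\overline{\Pr}(w,r)$, and to conclude via the triangle inequality. To this end, I first reduce to the smallness regime $[a^{ij}]_{C^{0,\alpha}}+[g^i]_{C^{0,\alpha}}\leq\delta_0$ needed by Lemma~\ref{lem:negative} and Corollary~\ref{cor:negative}. I translate $w$ to place $x_0$ at the origin (invoking Uraltseva's change of coordinates from the introduction to restore~(N) there) and rescale via $v(x):=w(\rho_0 x)/\rho_0^{3/2}$ for a small $\rho_0\in(0,1/2)$. A direct computation shows that $v$ solves a thin obstacle problem with new coefficients $\tilde a^{ij}(x)=a^{ij}(\rho_0 x)$ and inhomogeneity $\tilde g^i(x)=\rho_0^{-1/2}g^i(\rho_0 x)$, still satisfying~(N); their Hölder seminorms are scaled by $\rho_0^\alpha$ and $\rho_0^{\alpha-1/2}$ respectively, both small for small $\rho_0$ precisely because $\alpha>1/2$. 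After possibly an additional scalar rescaling so that $v$ meets the hypotheses of Corollary~\ref{cor:negative}, the resulting bound for $v$ translates back into the claimed bound for $w$ with a constant absorbing $\rho_0$ and $\|w\|_{\LL(B_1)}=1$.

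Assuming now the hypotheses of Corollary~\ref{cor:negative} for $w$ itself, write $\overline{\Pr}(w,r)=c_r p^{\xi_r}$ with $c_r\geq 0$, $\xi_r\in\mathbb{S}^{n-1}$, and $p^\xi(x):=\Ree((x'\cdot\xi)+i|x_{n+1}|)^{3/2}$. The $3/2$-homogeneity of $p^\xi$ gives $\|p^\xi\|_{\LL(B_r)}=\kappa_n r^{3/2}$ for a dimensional constant $\kappa_n>0$ independent of $\xi$. Applying Corollary~\ref{cor:negative} together with the triangle inequality, for $0<s<r<1/2$ I obtain
\begin{align*}
c_s\kappa_n r^{3/2}&=\|c_s p^{\xi_s}\|_{\LL(B_r)}\\
&\leq\|c_s p^{\xi_s}-c_r p^{\xi_r}\|_{\LL(B_r)}+c_r\kappa_n r^{3/2}\\
&\leq C_0 r^{3/2+\epsilon_0/2}+c_r\kappa_n r^{3/2},
\end{align*}
so that $c_s\leq c_r+(C_0/\kappa_n)r^{\epsilon_0/2}$. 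Fixing $r=r_1:=1/4$ and using the crude bound $c_{r_1}\leq C\|w\|_{\LL(B_{r_1})}\leq C$, I infer $c_s\leq C^*$ uniformly in $s\in(0,r_1]$.

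Finally, the estimate \eqref{eq:solid} established at the start of the proof of Corollary~\ref{cor:negative} provides $\|w-\overline{\Pr}(w,r)\|_{\LL(B_r)}\leq C_0 r^{3/2+\epsilon_0/2}$ for $r\in(0,1/2)$. Combined with $\|\overline{\Pr}(w,r)\|_{\LL(B_r)}=c_r\kappa_n r^{3/2}\leq C^*\kappa_n r^{3/2}$ from the previous paragraph, the triangle inequality yields $\|w\|_{\LL(B_r)}\leq C r^{3/2}$ for $r\in(0,r_1]$. For $r\in[r_1,1/2]$ the inequality is trivial after bounding $\|w\|_{\LL(B_r)}$ by a multiple of $\|w\|_{\LL(B_1)}=1$ and absorbing $r_1^{3/2}$ into the constant. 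The main obstacle is the reduction in the first paragraph, which requires careful tracking of how the rescaling interacts with the $L^2$-normalization of $w$; this is resolved precisely because $\alpha>1/2$, so that the inhomogeneity's Hölder seminorm genuinely decays under rescaling and the smallness hypothesis can always be achieved.
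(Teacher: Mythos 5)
Your proof is correct and takes essentially the same route as the paper's: both split $\|w\|_{\LL(B_r)}$ by the triangle inequality into $\|w-\overline{\Pr}(w,r)\|_{\LL(B_r)}\lesssim r^{3/2+\epsilon_0/2}$ (via \eqref{eq:solid}, itself a consequence of Lemma~\ref{lem:negative}) plus a uniform bound on the amplitude of the projection $\overline{\Pr}(w,r)$, which in turn comes from the decay of the projection differences in Corollary~\ref{cor:negative}. Your write-up spells out two details the paper leaves implicit — the rescaling that reduces to the small-seminorm regime required by Lemma~\ref{lem:blow_up}/Corollary~\ref{cor:negative} (correctly exploiting $\alpha>1/2$ so the inhomogeneity's seminorm shrinks) and the explicit Cauchy-type bound $c_s\leq c_{r_1}+(C_0/\kappa_n)r_1^{\epsilon_0/2}$ — but the underlying mechanism is identical.
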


\begin{proof}
We assume that $x_0=0\in \Gamma_w$ and only show the estimate for this point. For an arbitrary
free boundary point the argument is similar.\\
We apply Lemma \ref{lem:negative} in combination with Corollary
\ref{cor:negative} to infer
\begin{align*}
 \|w\|_{\LL( B_r)} &\leq \| w - \overline{\Pr}(w,r)  \|_{\LL( B_r)} + r^{3/2}
 \|\overline{\Pr}(w,r) - \overline{\Pr}(w,1)\|_{\LL( B_1)} \\
 & \quad
 + r^{3/2}\|\overline{\Pr}(w,1)\|_{\LL(B_1)} \leq C r^{3/2}.
\end{align*}
This yields the desired result.
\end{proof}

Last but not least, we discuss the situation, in which $\alpha\in(0,1/2)$:

\begin{lem}
 \label{lem:less_half}
Let $w:B_1 \rightarrow \R$ be a solution to the variable coefficient
thin obstacle problem,
for which the condition (N) is satisfied with $\alpha \in (0,1/2)$. Suppose further that
$\|w\|_{L^2(B_1)}=1$  and that $0\in \Gamma_w$.
Then there exists a constant $C_0>0$ depending on $\alpha, n, \| a^{ij}\|_{C^{0,\alpha}(B_1)},
\| g^{i}\|_{C^{0,\alpha}(B_1)}$ such that
\begin{align*}
\|w\|_{\tilde{L}^2(\p B_r)} \leq C_0 r^{1+ \alpha} \text{ for any } r\in (0,1/2).
\end{align*}
\end{lem}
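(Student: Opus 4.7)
The plan is to argue by contradiction and compactness in the style of Lemma~\ref{lem:growth_a}, using as a key structural input the fact that, at $0 \in \Gamma_w$ with $a_0 = 0$, both one-sided normal derivatives of $w$ vanish. Indeed, Proposition~\ref{prop:Hoelder} gives $w \in C^{1,\beta}(B_{1/2}^\pm)$ for every $\beta \in (0,\alpha)$, and the Signorini constraint $w \geq 0$ on $B'_1$ together with the continuity of the tangential gradient forces $\p_j w(0) = 0$ for $j = 1,\dots,n$. Setting $\alpha^\pm := \p_{n+1}^\pm w(0)$, the definition of $a_0$ as the limit of $L^2$-best linear approximations yields $(\alpha^+ + \alpha^-)/2 = a_0 = 0$, while passing the Signorini equality $\p_{\nu_+} w + \p_{\nu_-} w = 0$ from $\Omega_w$ to the limit at $0 \in \overline{\Omega_w}$ (using the $C^{0,\beta}$ continuity of the traces and the normalization (N), which aligns the co-normal at $B'_1$ with $\pm e_{n+1}$) gives $-\alpha^+ + \alpha^- = 0$. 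Hence $\alpha^+ = \alpha^- = 0$.

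If the lemma failed, there would exist solutions $w^k$ of the variable coefficient thin obstacle problem satisfying the hypotheses together with $a_0^k = 0$, and radii $r_k \to 0$ chosen maximally so that $\|w^k\|_{\LL(\p B_r)} \leq k r^{1+\alpha}$ for $r \in [r_k, 1/2]$ and $\|w^k\|_{\LL(\p B_{r_k})} = k r_k^{1+\alpha}$. Define
\begin{equation*}
v_k(x) := \frac{w^k(r_k x)}{k\, r_k^{1+\alpha}} .
\end{equation*}
Then $\|v_k\|_{\LL(\p B_1)} = 1$, $\|v_k\|_{\LL(\p B_R)} \leq R^{1+\alpha}$ for $R \in [1, 1/(2r_k)]$, $v_k(0) = 0$, and by the structural fact combined with the chain rule $\p_{n+1}^\pm v_k(0) = 0$. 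By \eqref{eq:G_i}, \eqref{eq:G_i2} and (N), $v_k$ solves a rescaled thin obstacle problem with divergence-form inhomogeneity $\tilde G^i_k(x) = G^i_k(r_k x)/(k r_k^\alpha)$ obeying $|\tilde G^i_k(x)| \leq C|x|^\alpha/k$, which vanishes in $C^{0,\alpha}_{loc}$ as $k \to \infty$. Applying Proposition~\ref{prop:Hoelder} uniformly in $k$, along a subsequence $v_k \to v_0$ strongly in $L^2_{loc}$ and in $C^{1,\gamma}(B_R^\pm)$ for each $R > 1$ and some $\gamma \in (0,\alpha)$. Arguing as in Step~1 of Lemma~\ref{lem:growth_a}, $v_0$ is a global solution of the constant coefficient thin obstacle problem \eqref{eq:limit_equ} with $\|v_0\|_{\LL(\p B_1)} = 1$, $\|v_0\|_{\LL(\p B_R)} \leq R^{1+\alpha}$ for $R \geq 1$, $v_0(0) = 0$ and, by the up-to-$B'$ convergence, $\p_{n+1}^\pm v_0(0) = 0$.

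Since $1 + \alpha < 3/2$ and $v_0(0) = 0$, Lemma~\ref{lem:global}(ii) (with the constant case excluded by the normalization) yields $v_0(x) = a|x_{n+1}| + b x_{n+1}$ with $a \leq 0$. But then the identities $\p_{n+1}^+ v_0(0) = a+b = 0$ and $\p_{n+1}^- v_0(0) = -a+b = 0$ force $a = b = 0$, i.e. $v_0 \equiv 0$, contradicting $\|v_0\|_{\LL(\p B_1)} = 1$.

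The principal obstacle is securing the $C^{1,\gamma}$-convergence of $v_k$ to $v_0$ \emph{up to the hyperplane} $B'_R$ from each side, uniformly in $k$, so that the pointwise conditions $\p_{n+1}^\pm v_k(0) = 0$ pass to the limit. This is available because Proposition~\ref{prop:Hoelder} applies to the rescaled problems with constants uniform in $k$: the rescaling preserves the $C^{0,\alpha}$-seminorms of the coefficients, and the normalization (N) is inherited by $a^{ij}_k(r_k\cdot)$ up to an error of order $r_k^\alpha$ that is absorbed in the limit, which is the homogeneous constant coefficient thin obstacle problem.
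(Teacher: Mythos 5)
Your proposal is correct and follows exactly the paper's strategy (contradiction, maximal radii, blow-up normalized by $\|w^j\|_{\LL(\partial B_{r_j})}$, compactness from Proposition~\ref{prop:Hoelder}, classification of the limit via Lemma~\ref{lem:global}(ii)). The one place where you give more detail than the paper is the closing step: you explicitly show $\p_{n+1}^\pm w(0)=0$ by combining the normalization $a_0=0$ with the Signorini equality and the vanishing of the tangential gradient, then pass this to the limit in $C^{1,\gamma}(B^\pm)$ to eliminate \emph{both} coefficients $a,b$ in $v_0=a|x_{n+1}|+bx_{n+1}$; the paper reaches the same conclusion more tersely by asserting $0\in\Gamma_{\tilde v_0}$ and writing the limit as $a|x_{n+1}|$ directly, leaving the elimination of the $bx_{n+1}$ part implicit. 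Your version is a fair and welcome unpacking of the same argument rather than a different route.
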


\begin{proof}
We argue similarly as in Andersson \cite{An16} and in Lemma~\ref{lem:almost_linear}.
Arguing by contradiction, we can find a sequence of solutions $w^j$ with $0\in \Gamma_{w^j}$ and radii
$r_j$ such that
\begin{align*}
&\| w^j \|_{\LL(B_{r_j})} = j r_j^{1+\alpha },\\
&\| w^j \|_{\LL(B_{r})} \leq  j r^{1+ \alpha } \mbox{ for all }
r\in[r_j,1].
\end{align*}
We note that $r_j\rightarrow 0_+$, due to the uniform boundedness of
$\|w^j\|_{\tilde{L}^2(B_{r_j})}$. We consider the blow-up
\begin{align*}
 \tilde{v}_{j}(x):=\frac{w^j(r_jx)}{\|w^{j}\|_{\tilde{L}^2(B_{r_j})}}.
\end{align*}
By the choice of $r_j$ this satisfies the growth estimate
\begin{align}
\label{eq:growth_b}
 \|\tilde{v}_j\|_{\LL(B_{R})} =\frac{\|w^j\|_{\tilde{L}^2(B_{r_jR})}}{\|w^j\|_{\tilde{L}^2(B_{r_j})}}\leq R^{1+\alpha},\quad \forall R\in [1,\frac{1}{r_j}].
\end{align}
Compactness of $\tilde{v}_j$ and the vanishing of the inhomogeneity in the equation
follow as in
Lemma~\ref{lem:almost_linear} and Lemma
\ref{lem:blow_up}.
Hence, the limiting function
$\tilde{v}_0$ is a solution of the constant coefficient thin obstacle problem
\eqref{eq:limit_equ}.
Using the
growth estimate (\ref{eq:growth_b}) in combination with (ii) of Lemma \ref{lem:global}
implies that
$\tilde{v}_0(x)= a|x_{n+1}|$ with $a\leq 0$ (where we used that $\tilde{v}_{0}(x)=0$).
As the almost optimal regularity result of Proposition \ref{prop:Hoelder} however implies that $0\in\Gamma_{v_0}$, we deduce that $a=0$.
This contradicts the strong convergence
$\tilde{v}_j\rightarrow \tilde{v}_0$ in $L^2( B_1)$ and the normalization
$\|\tilde{v}_j\|_{\tilde{L}^2( B_1)}=1$.
\end{proof}

\begin{rmk}[$\alpha = 1/2$]
\label{rmk:3/2}
We emphasize that the proof of Lemma \ref{lem:less_half} does not hold uniformly as $\alpha \rightarrow \frac{1}{2}$. However, for the case $\alpha=1/2$ (a minor extension of) the technique from above allows us to prove that
\begin{align}
\label{eq:log}
\|w\|_{\LL(\p B_r)} \leq C r^{3/2}\ln(r) \mbox{ for any } r\in(0,1/2).
\end{align}
More generally, it allows us to prove
\begin{align*}
\|w\|_{\LL(\p B_r)} \leq C r^{3/2}f(r) \mbox{ for any } r\in(0,1/2).
\end{align*}
where $f(r)$ is such that $\frac{f(rR)}{f(r)}\leq f(R)$. Furthermore, considering the two-dimensional solution $w(r,\theta)=-(\ln r) r^{3/2}\cos(3\theta/2)$, we note that the logarithmic loss in (\ref{eq:log}) is indeed optimal.
\end{rmk}

We finally combine all the auxiliary results from this section (Lemma~\ref{lem:opt_growth} and Lemma~\ref{lem:less_half}) by formulating the optimal regularity result:

\begin{thm}[Optimal Hölder regularity]
\label{thm:Hoelder_opt}
Let $w$ be a solution to the variable coefficient thin obstacle problem in $B_1$ with
$a^{ij}, g^i\in C^{0,\alpha}(B_1)$ with $\alpha \in (0,1/2)$ or $\alpha\in (1/2,1)$ satisfying the normalization condition (N).
Then, there exists a constant
$C=C(n,\alpha, \|a^{ij}\|_{C^{0,\alpha}}, \|g^i\|_{C^{0,\alpha}})$ such that
\begin{align*}
\|w\|_{C^{1,\min\{\alpha,1/2\}}(B_{1/2})}\leq C \|w\|_{\LL(B_1)}.
\end{align*}
\end{thm}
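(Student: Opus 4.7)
The plan is to reduce Theorem~\ref{thm:Hoelder_opt} to a pointwise growth estimate of the form
\begin{equation*}
\|w - a_{x_0} x_{n+1}\|_{\LL(B_r(x_0))} \leq C r^{1+\min\{\alpha,1/2\}}\|w\|_{\LL(B_1)}, \qquad r \in (0,1/2), \ x_0 \in \Gamma_w \cap B_{1/2},
\end{equation*}
at every free boundary point. This is exactly the reduction highlighted in the opening of Section~\ref{sec:opt_reg}, and once established the same interior Dirichlet/Neumann Schauder argument used to conclude Proposition~\ref{prop:Hoelder} (c.f.\ Proposition 4.22 in \cite{KRS14}) promotes it to the desired $C^{1,\min\{\alpha,1/2\}}$ bound in $B_{1/2}^{\pm}$.

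First I would fix $x_0 \in \Gamma_w \cap B_{1/2}$, let $a_{x_0}$ be the linear coefficient provided by Proposition~\ref{prop:Hoelder}, and introduce $\tilde w(x) := w(x+x_0) - a_{x_0} x_{n+1}$. As observed in the preamble to Section~\ref{sec:opt_reg}, $\tilde w$ solves a variable coefficient thin obstacle problem of the form \eqref{eq:main2} with the translated coefficients $a^{ij}(\,\cdot\,+x_0)$ and with inhomogeneity $G^i$ given by \eqref{eq:G_i}, which is still $C^{0,\alpha}$ and vanishes at the new origin by the pointwise bound \eqref{eq:G_i2} and (N). After a standard rotation/rescaling (Uraltseva's change of coordinates) the normalization (N) is restored at $x_0$. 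A further dilation $\tilde w_\rho(x) = \rho^{-1-\min\{\alpha,1/2\}}\tilde w(\rho x)$ for $\rho$ small (depending only on $\|a^{ij}\|_{C^{0,\alpha}}, \|g^i\|_{C^{0,\alpha}}$) shrinks the Hölder seminorms $[a^{ij}]_{C^{0,\alpha}}+[g^i]_{C^{0,\alpha}}$ by a factor $\rho^{\alpha}$, bringing them below the threshold $\delta_0$ required by Corollary~\ref{cor:perturb_epi}. Throughout, Proposition~\ref{prop:Hoelder} controls $|a_{x_0}|$ by $C\|w\|_{\LL(B_1)}$, so the normalization $\|\tilde w\|_{\LL(B_1)} \lesssim 1$ is preserved up to a universal constant.

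For $\alpha \in (1/2,1)$, Lemma~\ref{lem:opt_growth} applied to the normalized $\tilde w$ immediately yields $\|\tilde w\|_{\LL(B_r)} \leq C r^{3/2}$, which is exactly the sought-after estimate. For $\alpha \in (0,1/2)$, Lemma~\ref{lem:less_half} supplies $\|\tilde w\|_{\tilde L^2(\partial B_r)} \leq C r^{1+\alpha}$; integrating in the radial variable (or invoking Poincaré combined with the $W^{1,2}$-Caccioppoli bound on $\tilde w$) upgrades this to the solid-ball estimate $\|\tilde w\|_{\LL(B_r)} \leq C r^{1+\alpha}$. Translating back to $w$ produces the required inequality. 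Since all constants arising in Lemmas~\ref{lem:opt_growth} and \ref{lem:less_half} depend only on $n,\alpha,\|a^{ij}\|_{C^{0,\alpha}}, \|g^i\|_{C^{0,\alpha}}$, the bound is uniform in $x_0$.

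The main obstacle is the bookkeeping required to apply the two growth lemmas at an \emph{arbitrary} free boundary point rather than only at the origin. The lemmas are stated under (N), which fixes $a^{ij}(0)=\delta^{ij}$, $g^i(0)=0$ and the off-diagonal condition on $B_1'$. After translating $x_0$ to the origin and subtracting the linear profile, one must verify that the Uraltseva-type change of variables can be carried out \emph{uniformly} in $x_0\in \Gamma_w\cap B_{1/2}$ without inflating the Hölder norms, and that the dilation factor $\rho$ can be chosen independently of $x_0$. This is standard but requires one to track how the $C^{0,\alpha}$ seminorms of $a^{ij}, g^i$ transform, and to confirm that the modified inhomogeneity $G^i$ appearing in $\tilde w$ still vanishes at the base point (which is guaranteed by \eqref{eq:G_i2}). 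Once this is in place, the remainder is a straightforward amalgamation with interior Schauder estimates on $\Omega_w^{\pm}$.
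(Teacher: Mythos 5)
Your proposal is correct and follows essentially the same route as the paper: apply Lemma~\ref{lem:opt_growth} (for $\alpha>1/2$) or Lemma~\ref{lem:less_half} (for $\alpha<1/2$) at each free boundary point to obtain the growth estimate $\|w-a_{x_0}x_{n+1}\|_{\LL(B_r(x_0))}\lesssim r^{1+\min\{\alpha,1/2\}}$, then combine with interior elliptic/Schauder estimates off the free boundary exactly as in Proposition~\ref{prop:Hoelder}. The paper leaves this conclusion to the reader ("very similar to Proposition~\ref{prop:Hoelder}, we do not repeat it here"), and your write-up fills in precisely the bookkeeping the paper elides: the translation/Uraltseva renormalization at $x_0$, the dilation to bring $[a^{ij}]_{C^{0,\alpha}}+[g^i]_{C^{0,\alpha}}$ below $\delta_0$ for Corollary~\ref{cor:perturb_epi} (needed only in the $\alpha>1/2$ branch), and the passage from boundary-sphere to solid-ball $L^2$ growth in the $\alpha<1/2$ case (which can be done by simply integrating the spherical estimate in $r$, without invoking Poincar\'e).
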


The proof is a consequence of the free boundary growth estimates and the corresponding interior elliptic estimates away from the free boundary. Since it is very similar to Proposition~\ref{prop:Hoelder}, we do not repeat it here.

\section{Free Boundary Regularity}

\label{sec:freebdr}

In this last main section of the article, we harvest the results from Section \ref{sec:opt_reg} to derive regularity for (parts of) the free boundary (c.f. Theorem \ref{thm:freebound}). Here we analyze the part of the free boundary, which corresponds to the lowest possible homogeneity (modulo linear functions). This part of the free boundary corresponds to the \emph{regular} free boundary in the situation of the constant coefficient thin obstacle problem.\\

As a first result towards the free boundary regularity we investigate $L^2$ normalized blow-ups around free boundary points of the lowest possible vanishing order (modulo linear functions):

\begin{prop}\label{prop:vanishing_order}
Let $w$ be a solution to the thin obstacle problem \eqref{eq:vari_0}. Let $a^{ij}\in C^{0,\alpha}(B_1)$ and $g^i\in C^{0,\alpha}(B_1)$ for some $\alpha\in (1/2,1)$, and moreover suppose that they satisfy the normalization condition (N). Given $x_0\in \Gamma_w$, assume that
\begin{align}\label{eq:vanishing}
\kappa_{x_0}:=\liminf_{r\rightarrow 0_+}\frac{\ln( \|w-\ell_{x_0}\|_{\tilde{L}^2(B_r)})}{\ln r}<1+\alpha,
\end{align}
where $\ell_{x_0}(x)=a_{x_0}x_{n+1}$ with
\begin{align}
\label{eq:ax0}
a_{x_0}:=\lim_{x\in \Omega_w, x\rightarrow x_0}\p_{n+1}w(x).
\end{align}
Then there exists a subsequence $r_j\rightarrow 0_+$ and $p_{3/2}\in \mathcal{E}$ with $\|p_{3/2}\|_{\tilde{L}^2(B_1)}=1$, such that
\begin{align*}
\tilde{w}_{r_j}\rightarrow p_{3/2} \text{ in } C^{1,\beta}_{loc}(\R^{n+1}_\pm) \mbox{ for } \tilde{w}_{r_j}(x):=\frac{(w-\ell_{x_0})(r_jx)}{\|w-\ell_{x_0}\|_{\tilde{L}^2(B_{r_j})}}.
\end{align*}
Here $0<\beta <1/2$.
\end{prop}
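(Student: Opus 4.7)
My plan is to blow up $v := w - \ell_{x_0}$ at $x_0$ (taken to be the origin), and to identify the limit as an element of $\mathcal{E}$ by combining the convergence of the $3/2$-homogeneous projections from Corollary~\ref{cor:negative} with the Weiss energy decay from Corollary~\ref{cor:perturb_epi}. By the reductions of Section~\ref{sec:opt_reg} and \eqref{eq:G_i2}, $v$ satisfies the thin obstacle problem \eqref{eq:main2} with $|G^i(x)| \leq C|x|^\alpha$, and Lemma~\ref{lem:opt_growth} yields $d_r := \|v\|_{\tilde{L}^2(B_r)} \leq C r^{3/2}$; this together with the hypothesis forces $\kappa_{x_0} \in [3/2, 1+\alpha)$. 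I pick $\tilde\kappa \in (\kappa_{x_0}, 1+\alpha)$ and a sequence $r_j \to 0_+$ with $d_{r_j} \geq r_j^{\tilde\kappa}$. The blow-ups $\tilde w_{r_j}(x) := v(r_j x)/d_{r_j}$ then solve a scaled thin obstacle problem whose inhomogeneity is bounded in $C^{0,\alpha}(B_R)$ by $C R^\alpha r_j^{1+\alpha - \tilde\kappa} \to 0$. For $C^{1,\beta}_{loc}(\R^{n+1}_\pm)$ compactness together with the uniform growth $\|\tilde w_{r_j}\|_{\tilde{L}^2(B_R)} \leq CR^{3/2}$, I need $d_r \geq c r^{3/2}$ for small $r$.

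I obtain this lower bound from Corollary~\ref{cor:negative}: the projections $\overline{\Pr}(v,r) \in \mathcal{E}$ converge to some $p_* \in \mathcal{E}$ as $r \to 0_+$, and $p_* \neq 0$ because otherwise combining Lemma~\ref{lem:negative} with the rate $|c_r - c_*| \leq C r^{\epsilon_0/2}$ already gives $\|v\|_{\tilde{L}^2(B_r)} \leq Cr^{3/2+\epsilon_0/2}$, and a bootstrap---rescaling $v$ at the improved exponent and reapplying the decay lemmas, which remains admissible as long as the exponent stays below $1+\alpha$---would propagate this to $\|v\|_{\tilde{L}^2(B_r)} \leq C r^{1+\alpha-\epsilon}$ for any $\epsilon > 0$, forcing $\kappa_{x_0} \geq 1 + \alpha$ and contradicting the hypothesis. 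This bootstrap is the main technical obstacle, since a single application of Lemma~\ref{lem:negative} only improves the exponent by $\epsilon_0/2$ and one must iterate the projection-decay mechanism across shrinking scales to cover the full range up to $1+\alpha$. Once $p_* \neq 0$, the ratio $\lambda_{r_j} := d_{r_j}/r_j^{3/2}$ is pinched between two positive constants, so Theorem~\ref{thm:Hoelder_opt} applied on each $B_R$ supplies uniform $C^{1,1/2}$ bounds; along a subsequence, $\tilde w_{r_j} \to w_0$ in $C^{1,\beta}_{loc}(\R^{n+1}_\pm)$ with $\|w_0\|_{\tilde{L}^2(B_1)} = 1$.

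The limit $w_0$ is a global solution to \eqref{eq:limit_equ} of growth at most $C|x|^{3/2}$ at infinity, with $w_0(0) = 0$ and $\nabla w_0(0) = 0$; the latter encodes the definition \eqref{eq:ax0} of $a_{x_0}$, which enforces $\nabla v(0) = 0$ via the Neumann condition that holds on $\Omega_w$. Rescaling the Weiss energy gives $W(1, v(r_j\cdot)/r_j^{3/2}) = W(r_j, v) \leq C r_j^\beta \to 0$ by Corollary~\ref{cor:perturb_epi}; writing $v(r_j\cdot)/r_j^{3/2} = \lambda_{r_j} \tilde w_{r_j}$ and passing to the limit yields $W(1, w_0) \leq 0$, which combined with the standard monotonicity of $W_{3/2}$ for the constant coefficient thin obstacle problem forces $W(r, w_0) \equiv 0$, so $w_0$ is $3/2$-homogeneous. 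The Liouville classification of Lemma~\ref{lem:global}(iii) then identifies $w_0 = b\, x_{n+1} + d\, \Ree(Q x' + i|x_{n+1}|)^{3/2}$, with $b = 0$ forced by $\nabla w_0(0) = 0$ and $d \geq 0$ by nonnegativity on $B_1'$; hence $w_0 = p_{3/2} \in \mathcal{E}$ with $\|p_{3/2}\|_{\tilde{L}^2(B_1)} = 1$, as desired.
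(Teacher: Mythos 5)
Your approach diverges from the paper's, and the divergence creates a genuine gap that you yourself flag as ``the main technical obstacle'' but do not resolve. The paper obtains the required uniform growth bound $\|\tilde{w}_{r_j}\|_{\tilde{L}^2(B_R)} \leq R^{1+\alpha}$ directly by choosing $r_j$ to be the \emph{maximal} radius with $\|v\|_{\tilde{L}^2(B_r)} \leq j r^{1+\alpha}$ for $r\in[r_j,1)$; the resulting normalization $\|v\|_{\tilde{L}^2(B_{r_j})} = j r_j^{1+\alpha}$ makes the rescaled inhomogeneity of size $R^\alpha/j \to 0$ and never requires any lower bound of the form $d_r \gtrsim r^{3/2}$, which is the quantity you struggle to establish. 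You instead route through non-degeneracy of $p_*=\lim_{r\to 0}\overline{\Pr}(v,r)$, and to rule out $p_*=0$ you invoke a bootstrap that is supposed to upgrade $\|v\|_{\tilde{L}^2(B_r)}\lesssim r^{3/2+\epsilon_0/2}$ all the way to $r^{1+\alpha-\epsilon}$. But $\epsilon_0$ is a fixed constant pinned below $\beta/8$, where $\beta$ is the Weiss decay exponent from Corollary~\ref{cor:perturb_epi}, and Lemma~\ref{lem:negative} gives only an $r^{\epsilon_0}$ gain \emph{relative to the fixed reference} $\max\{\|v\|_{\tilde{L}^2(\p B_r)}, r^{3/2}\}$; the projection-convergence rate of Corollary~\ref{cor:negative} and Proposition~\ref{prop:32} is likewise locked at $r^{\epsilon_0/2}$. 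None of these self-improves when one feeds in a faster a priori decay of $v$: the compactness machinery behind Lemma~\ref{lem:negative} requires the rescaled Weiss energy $r^3 W(r,v)/\|v\|_{\tilde{L}^2(\p B_r)}^2 \lesssim r^{3+\beta}/\|v\|_{\tilde{L}^2(\p B_r)}^2$ to tend to zero, which fails as soon as $\|v\|_{\tilde{L}^2(\p B_r)}$ decays faster than $r^{3/2+\beta/2}$. Since the hypothesis only gives $\kappa_{x_0}<1+\alpha$ and $\kappa_{x_0}$ may lie anywhere in $(3/2+\epsilon_0/2,\, 1+\alpha)$, a single application of the projection estimates does not contradict $p_*=0$, and the iteration as sketched does not close.

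Once compactness in $C^{1,\beta}_{loc}(\R^{n+1}_\pm)$ is secured, your identification of the limit via the Weiss energy ($W(1,w_0)\leq 0$ plus monotonicity forces $3/2$-homogeneity) is a valid alternative to the paper's shorter route, which simply applies the subquadratic Liouville result Lemma~\ref{lem:global}(iii) to $w_0$ with $\sup_{B_R}|w_0|\leq CR^{1+\alpha}$, then uses $\nabla w_0(0)=0$ (inherited from the definition \eqref{eq:ax0}) to eliminate the linear mode and nonnegativity on $B_1'$ to fix the sign of the $3/2$-mode. The fix to your write-up is to discard the non-degeneracy discussion entirely and adopt the maximal-radius choice of $r_j$ at exponent $1+\alpha$.
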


\begin{proof}
The proof is very similar as the ones of Lemma~\ref{lem:almost_linear} and Lemma~\ref{lem:less_half}. We only show the claim for $x_0=0$. After subtracting $\ell_0(x)$, we may assume that $\ell_0(x)=0$. We note that the Signorini boundary condition on $B'_1$ does not change after subtracting $\ell_0$.\\
 Let $\tilde{\alpha}:=1+\alpha$. By \eqref{eq:vanishing} and Proposition~\ref{prop:Hoelder}, there exists a sequence $r_j\rightarrow 0$ such that $\|w\|_{\tilde{L}^2( B_{r_j})}=jr_j^{\tilde{\alpha}}$. We may also choose $r_j$ maximal, i.e. such that $\|w\|_{\tilde{L}^2( B_{r})}\leq jr^{\tilde{\alpha}}$ for $r \in [ r_j,1)$. We consider the blow-ups $\tilde{w}_{r_j}$. As in Lemma~\ref{lem:blow_up} we use our normalization and the maximality of $r_j$, in order to deduce that $\|\tilde{w}_{r_j}\|_{\tilde{L}^2(B_1)}=1$ and $\|\tilde{w}_{r_j}\|_{L^2(B_2)}\leq C$, where $C$ is uniform in $r_j$. Thus, $\|\tilde{w}_{r_j}\|_{W^{1,2}(B_1)}\leq C\|\tilde{w}_{r_j}\|_{L^2(B_2)}\leq C$. Similar estimates hold in the $C^{1,\beta}$ Hölder topologies.\\
Hence, up to a subsequence $\tilde{w}_{r_j}\rightarrow w_0$ in $C^{1,\beta}_{loc}(\R^{n+1}_{\pm})$ for $\beta\in (0,1/2)$, where $w_0$ is a solution to the thin obstacle problem for the Laplacian with zero obstacle and zero inhomogeneity. By the maximal choice of $r_j$, we further obtain
$$\sup_{B_R}|w_0|\leq C R^{\tilde{\alpha}} \mbox{ for all } R>1.$$ Since
$\tilde{\alpha}\in (1, 2)$, the Liouville theorem for the constant coefficient thin obstacle problem (c.f. Lemma \ref{lem:global} (iii)) yields that $w_0\in \mathcal{E}$. By our normalization and the strong convergences we infer that $\|w_0\|_{\tilde{L}^2(B_1)}=1$.
\end{proof}

Relying on Lemma \ref{lem:negative} and Corollary \ref{cor:negative}, we show that it is also possible to consider $3/2$-homogeneous blow-ups around free boundary points:

\begin{prop}
 \label{prop:32}
 Let $w:B_1 \rightarrow \R$ be a solution to the variable coefficient thin obstacle problem,
 satisfying the normalization condition (N). Assume that $0\in \Gamma_w$ and that $[a^{ij}]_{{C}^{0,\alpha}}+[g^i]_{{C}^{0,\alpha}}\leq \delta_0$ for some small $\delta_0>0$. Moreover, assume that $|\nabla w(0)|=0$. Then the limit
\begin{align*}
 \lim\limits_{r \rightarrow 0}w_r(x):= \lim\limits_{r \rightarrow 0}\frac{w(r x)}{ r^{3/2}} =: w_0(x)
\end{align*}
exists with
respect to the $C^{1,1/2-\epsilon}$ (with $\epsilon\in(0,1/2)$ arbitrary),
the $L^2$ and the weak $W^{1,2}$ topologies, and moreover $w_0\in \mathcal{E}$. Furthermore, there exists $\delta>0$ sufficiently small such that if
 \begin{align}
  \|w-h_{3/2}\|_{W^{1,2}(B_1)} \leq \delta,
 \end{align}
where $h_{3/2}$ is the model solution from Definition~\ref{defi:proj_3/2}, then
\begin{align}
\label{eq:fullball}
 \|w_0-h_{3/2} \|_{\LL(B_1)}\leq \frac{1}{10} \|h_{3/2}\|_{\LL(B_1)}.
\end{align}
In particular, $w_0$ is nontrivial.
\end{prop}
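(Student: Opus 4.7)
The plan is to split the argument into two parts: (a) establishing existence of the blow-up $w_0$ and its membership in $\mathcal{E}$ in the three claimed topologies, and (b) deducing the quantitative proximity bound \eqref{eq:fullball} by a compactness-contradiction argument.

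For (a), the hypothesis $|\nabla w(0)|=0$ together with $0\in\Gamma_w$ means no linear part must be subtracted, so Lemma \ref{lem:opt_growth} directly yields $\|w\|_{\tilde{L}^2(B_r)}\le Cr^{3/2}$. The rescalings $w_r(x):=w(rx)/r^{3/2}$ are therefore uniformly bounded in $\tilde{L}^2(B_1)$. Exploiting the $3/2$-homogeneity of elements of $\mathcal{E}$ (so that $\|p\|_{\tilde{L}^2(B_r)}=r^{3/2}\|p\|_{\tilde{L}^2(B_1)}$ for $p\in\mathcal{E}$), the solid-ball estimate \eqref{eq:solid} and Corollary \ref{cor:negative} translate into
\begin{equation*}
\|w_r-\overline{\Pr}(w,r)\|_{\tilde{L}^2(B_1)} \le C r^{\epsilon_0/2}, \qquad \|\overline{\Pr}(w,s)-\overline{\Pr}(w,r)\|_{\tilde{L}^2(B_1)} \le C r^{\epsilon_0/2} \text{ for } 0<s<r.
\end{equation*}
The second inequality makes $\{\overline{\Pr}(w,r)\}_{r\to 0^+}$ Cauchy in $L^2(B_1)$; since $\mathcal{E}$ is a closed, finite-dimensional cone, the projections converge in $L^2(B_1)$ to some $p^*_{3/2}\in\mathcal{E}$, and the first inequality yields $w_r\to p^*_{3/2}$ in $L^2(B_1)$. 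The optimal regularity Theorem \ref{thm:Hoelder_opt} provides uniform $C^{1,1/2-\epsilon}_{loc}(B_1^\pm)$ and $W^{1,2}_{loc}$ bounds on the family $\{w_r\}$, so Arzelà-Ascoli together with the uniqueness of the $L^2$-limit upgrades the convergence to $C^{1,1/2-\epsilon}_{loc}(B_1^\pm)$ and weak $W^{1,2}$, identifying $w_0 = p^*_{3/2}\in\mathcal{E}$.

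For (b), the plan is to argue by contradiction: assume there are sequences $\delta_k\to 0$, coefficients $a^{ij}_k, g^i_k$ satisfying (N) with $[a^{ij}_k]_{C^{0,\alpha}}+[g^i_k]_{C^{0,\alpha}}\le\delta_0$, and solutions $w^k$ satisfying all hypotheses with $\|w^k-h_{3/2}\|_{W^{1,2}(B_1)}\le\delta_k$, yet with $\|w^k_0-h_{3/2}\|_{\tilde{L}^2(B_1)} > \tfrac{1}{10}\|h_{3/2}\|_{\tilde{L}^2(B_1)}$. Theorem \ref{thm:Hoelder_opt} yields a uniform $C^{1,1/2}(B_{1/2}^\pm)$ bound, so Arzelà-Ascoli combined with the $L^2$-identification of the limit gives $w^k\to h_{3/2}$ in $C^{1,\beta}_{loc}(B_{1/2}^\pm)$ for any $\beta<1/2$. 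By Lemma \ref{lem:opt_growth}, the blow-ups $w_0^k$ are uniformly bounded in $\tilde{L}^2(B_1)$, so along a further subsequence they converge in the finite-dimensional cone $\mathcal{E}$ to some $p_\infty\in\mathcal{E}$.

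The key estimate is that, by part (a), $\|w^k-w^k_0\|_{\tilde{L}^2(B_r)}\le C r^{3/2+\epsilon_0/2}$ with $C$ independent of $k$ (thanks to the uniform bounds on $[a^{ij}_k]_{C^{0,\alpha}}+[g^i_k]_{C^{0,\alpha}}$ and on $\|w^k\|_{\tilde{L}^2(B_1)}$). Combined with the triangle inequality, for any fixed $r\in(0,1/2)$ one has
\begin{equation*}
\|p_\infty-h_{3/2}\|_{\tilde{L}^2(B_r)} \le r^{3/2}\|p_\infty - w_0^k\|_{\tilde{L}^2(B_1)} + C r^{3/2+\epsilon_0/2} + \|w^k-h_{3/2}\|_{\tilde{L}^2(B_r)}.
\end{equation*}
Letting $k\to\infty$ with $r$ fixed, the first and third terms vanish, leaving $\|p_\infty-h_{3/2}\|_{\tilde{L}^2(B_r)}\le C r^{3/2+\epsilon_0/2}$. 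By the $3/2$-homogeneity of both $p_\infty$ and $h_{3/2}$, this yields $\|p_\infty-h_{3/2}\|_{\tilde{L}^2(B_1)}\le C r^{\epsilon_0/2}$ for every $r\in(0,1/2)$, forcing $p_\infty=h_{3/2}$ and contradicting the assumption; non-triviality of $w_0$ then follows from the reverse triangle inequality. I expect the main obstacle to be bookkeeping the $k$-uniformity of the constants in Lemma \ref{lem:negative} and Corollary \ref{cor:negative}, which requires one to verify that they depend only on the $C^{0,\alpha}$-norms of the coefficients and on $\|w\|_{\tilde{L}^2(B_1)}$.
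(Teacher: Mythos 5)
Your part (a) is essentially the paper's own argument: both use Corollary \ref{cor:negative} to show that $\{\overline{\Pr}(w,r)\}_{r\to 0}$ is Cauchy, identify the limit $w_0\in\mathcal{E}$, and invoke \eqref{eq:solid} and the triangle inequality to get the decay $\|w_r-w_0\|_{\LL(B_1)}\leq Cr^{\epsilon_0/2}$; the upgrade to $C^{1,1/2-\epsilon}$ and weak $W^{1,2}$ convergence via uniform a priori bounds plus uniqueness of the $L^2$-limit is the implicit content.

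For part (b), however, you take a genuinely different and more roundabout route than the paper. The paper argues directly: it writes
\begin{align*}
\|h_{3/2}-w_0\|_{\LL(B_1)} \leq \|w_r-h_{3/2}\|_{\LL(B_1)} + \|w_r-w_0\|_{\LL(B_1)}
\leq r^{-\frac{3}{2}-\frac{n+1}{2}}\,\delta + Cr^{\epsilon_0/2},
\end{align*}
where the first term uses nothing but the scaling of the $\LL$-norm and the hypothesis $\|w-h_{3/2}\|_{W^{1,2}(B_1)}\leq\delta$, and the second term is the decay estimate from part (a). One then \emph{first} fixes $r$ so that $Cr^{\epsilon_0/2}\leq\frac{1}{20}\|h_{3/2}\|_{\LL(B_1)}$, and \emph{then} chooses $\delta$ to make the first term equally small. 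This two-step "choose $r$, then $\delta$" argument is elementary, needs no compactness, and yields an explicit $\delta$. Your compactness-contradiction argument is logically sound (and your own concern about $k$-uniformity of the constants is easily dispatched: Lemma \ref{lem:negative} and Corollary \ref{cor:negative} are stated with constants controlled by coefficient norms and $\|w\|_{\LL(B_1)}$, and the latter stays close to $\|h_{3/2}\|_{\LL(B_1)}$ along the contradiction sequence), but it trades the short direct estimate for a longer blow-up sequence argument with a varying family of coefficients and an additional passage to the subsequential limit $p_\infty\in\mathcal{E}$. Both approaches work; the paper's is leaner and shows more transparently that $\delta$ can be taken to depend only on the universal constants and on $\|h_{3/2}\|_{\LL(B_1)}$ through the decay rate $C$ and exponent $\epsilon_0$.
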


\begin{proof}
We first note that by Corollary~\ref{cor:negative}, for all $0<s<r<1/2$,
\begin{align*}
 \left\|\overline{\Pr}(w,s)-\overline{\Pr}(w,r)\right\|_{\tilde{L}^2(B_1)}
 =\left\| \frac{\overline{\Pr}(w,s)}{r^{3/2}}
 - \frac{\overline{\Pr}(w,r)}{r^{3/2}} \right\|_{\LL( B_r)}
 \leq C r^{\epsilon_0/2}.
\end{align*}
Here, $C= C(\|a^{ij}\|_{C^{1,\alpha}}, \|g^i\|_{C^{1,\alpha}},n,\|w\|_{\LL(B_1)})$.
Thus, the functions $r\mapsto \overline{\Pr}(w,r)\in \tilde{L}^2(B_1)$ form a Cauchy
sequence with limit
$w_0\in \mathcal{E}$. Passing to the limit $s\rightarrow 0$ in the above inequality, we
hence obtain
$\|\overline{\Pr}(w,r)-w_0\|_{\tilde{L}^2(B_1)}
\leq C r^{\epsilon_0/2}$. This together with triangle inequality and \eqref{eq:solid} yields
\begin{align}\label{eq:decay_w}
 \|w_r-w_0\|_{\LL(B_1)}
\leq \|w_r - \overline{\Pr}(r,w)\|_{\LL(B_1)}  +
  \left\|\overline{\Pr}(r,w)-w_0\right\|_{\LL(B_1)}
  \leq C r^{\epsilon_0/2}.
\end{align}
Therefore the limit $\lim\limits_{r\rightarrow 0}w_r=w_0$ exists and $w_0\in \mathcal{E}$ (which might a priori be such that $w_0=0$).\\
Next, we prove \eqref{eq:fullball}. To this end, we note that by \eqref{eq:decay_w}
\begin{align*}
\|h_{3/2}-w_0\|_{\tilde{L}^2(B_1)}
&\leq \|w_r - h_{3/2}\|_{\LL(B_1)} + \|w_r - w_0\|_{\LL(B_1)} \\
&\leq r^{-\frac{3}{2}-\frac{n+1}{2}}\|w - h_{3/2}\|_{L^2(B_r)} + C r^{\epsilon_0/2}\\
&\leq r^{-\frac{3}{2}-\frac{n+1}{2}} \delta + C r^{\epsilon_0/2}.
\end{align*}
Choosing first $r\geq 0$ such that
$C r^{\epsilon_0/2}\leq \frac{1}{20}\|h_{3/2}\|_{\LL(B_1)}$,
and then choosing $\delta$ such that
$r^{-\frac{3}{2}-\frac{n+1}{2}}\delta\leq \frac{1}{20}\|h_{3/2}\|_{\LL(B_1)}$,
yields
\begin{align*}
\|h_{3/2}-w_0\|_{\tilde{L}^2(B_1)}
\leq  \frac{1}{10}\|h_{3/2}\|_{\LL(B_1)}.
\end{align*}
As a consequence, if $\delta$ is chosen sufficiently small,
\begin{align*}
\|w_0\|_{\tilde{L}^2(B_1)}
\geq \frac{1}{2}\|h_{3/2}\|_{\LL(B_1)} \geq  c_n > 0.
\end{align*}
\end{proof}

Finally, we address the free boundary regularity, for which we combine the results of Propositions \ref{prop:vanishing_order} and \ref{prop:32}:

\begin{thm}[Free boundary regularity]
 \label{thm:freebound}
Let $w:B_2 \rightarrow \R$ be a solution to the variable coefficient thin obstacle problem with $a^{ij},g^i\in C^{0,\alpha}$, $\alpha\in (1/2,1)$, satisfying the normalization condition (N). Let $0\in \Gamma_w$ and assume that $\kappa_0<1+\alpha$, where $\kappa_0$ is defined as in \eqref{eq:vanishing}.
Then there exist a radius $r_0>0$ and an exponent $\gamma\in (0,1)$ such that
$\Gamma_w\cap B_{r_0}$ is a $C^{1,\gamma}$ $(n-1)$-dimensional manifold.
\end{thm}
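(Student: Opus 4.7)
The plan is to combine Propositions \ref{prop:vanishing_order} and \ref{prop:32} with a Hölder-continuity argument for the spatial direction of the $3/2$-homogeneous blow-up profile, and then realize the free boundary as a $C^{1,\gamma}$ graph. First, I would work with $\tilde w := w - \ell_0$; subtracting $\ell_0(x)=a_0 x_{n+1}$ preserves the Signorini condition on $B_1'$ (since $\ell_0 \equiv 0$ on $B_1'$) and only perturbs the divergence-form inhomogeneity $g^i$ by a $C^{0,\alpha}$ term. Since $\kappa_0 < 1+\alpha$, Proposition \ref{prop:vanishing_order} yields a subsequence of the rescalings $\tilde w_{r_j}$ converging in $C^{1,\beta}_{loc}$ to some $p_0 \in \mathcal{E}$ of unit $\tilde L^2(B_1)$ norm. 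Because $\nabla \tilde w(0) = 0$ (by the choice of $a_0$ in \eqref{eq:ax0} together with the $C^{1,1/2-\epsilon}$ regularity of $w$), Proposition \ref{prop:32} applied to $\tilde w$ (after an initial rescaling to ensure the smallness condition on $[a^{ij}]_{C^{0,\alpha}} + [g^i]_{C^{0,\alpha}}$) upgrades this to the existence of the full limit $w_0 := \lim_{r\to 0}\tilde w(rx)/r^{3/2} \in \mathcal{E}$, which must agree with $p_0$. In particular $w_0(x) = c_0\,\Ree((x'\cdot \xi_0) + i|x_{n+1}|)^{3/2}$ with $c_0 > 0$ and $\xi_0 \in S^{n-1}$.

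Next I would establish openness of the regular set: there exists $r_0 > 0$ such that every $y_0 \in \Gamma_w \cap B_{r_0}$ admits a nontrivial $3/2$-homogeneous blow-up. Using the strong $C^{1,1/2-\epsilon}$ convergence of the previous step, one finds a scale $\rho > 0$ where $\tilde w$ is quantitatively $W^{1,2}$-close to the model $h_{3/2}^{\xi_0}$. For every $y_0$ sufficiently close to $0$, translating, subtracting the appropriate linear part $\ell_{y_0}$, and rescaling at a comparable scale around $y_0$ produces a solution $\tilde w^{y_0}$ that is likewise $W^{1,2}$-close to $h_{3/2}^{\xi_0}$. The non-degeneracy clause \eqref{eq:fullball} of Proposition \ref{prop:32} applies at $y_0$ and yields a nontrivial blow-up $w_0^{y_0} = c(y_0) \Ree((x'\cdot\xi(y_0)) + i|x_{n+1}|)^{3/2} \in \mathcal{E}\setminus\{0\}$, with a uniform lower bound $c(y_0) \geq c_0/2$; in particular $\kappa_{y_0} \leq 3/2 < 1+\alpha$.

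The hard step is to upgrade this pointwise existence of blow-ups into Hölder continuity of the unit vector $\xi(\cdot)$. Lemma \ref{lem:negative} and Corollary \ref{cor:negative}, applied uniformly in $y_0 \in \Gamma_w \cap B_{r_0}$ (the smallness of $[a^{ij}]_{C^{0,\alpha}}+[g^i]_{C^{0,\alpha}}$ is translation invariant), give
\begin{align*}
\bigl\| \tilde w^{y_0}(r\, \cdot) - r^{3/2} w_0^{y_0} \bigr\|_{\tilde L^2(B_1)} \leq C\, r^{3/2+\epsilon_0/2},
\end{align*}
with $\epsilon_0 > 0$ and $C$ independent of $y_0$. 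For two nearby points $y_0, y_1 \in \Gamma_w \cap B_{r_0}$ with $r := |y_0 - y_1|$, the $C^{1,1/2-\epsilon}$ regularity of $w$ bounds $\|\tilde w^{y_0}(\cdot + (y_1 - y_0)) - \tilde w^{y_1}\|_{L^\infty(B_{2r})} \leq C r^{3/2}$, so combining the two decay estimates above at scale $\sim r$ via the triangle inequality yields $\|w_0^{y_0} - w_0^{y_1}\|_{\tilde L^2(B_1)} \leq C r^{\gamma_0}$ for some $\gamma_0 \in (0,1)$. Since the map $(c,\xi) \mapsto c\,\Ree((x'\cdot\xi) + i|x_{n+1}|)^{3/2}$ from $(0,\infty)\times S^{n-1}$ into $\tilde L^2(B_1)$ is a smooth immersion wherever $c > 0$, and $c(\cdot) \geq c_0/2$ uniformly, this converts into $|\xi(y_0) - \xi(y_1)| \leq C|y_0 - y_1|^{\gamma}$ for some $\gamma \in (0,1)$.

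To conclude, I would use the uniform non-degeneracy of $w_0^{y_0}$ on its positivity side together with the decay estimate above to show that for each $y_0 \in \Gamma_w \cap B_{r_0}$ the set $\Gamma_w$ is, in a uniform neighborhood of $y_0$, the zero level set of a function whose spatial gradient points in the direction $\xi(y_0)$ up to controlled error. The Hölder continuity of $\xi(\cdot)$ then yields, via a standard implicit-function argument, that $\Gamma_w \cap B_{r_0}$ is a $C^{1,\gamma}$ $(n-1)$-dimensional manifold. The \emph{main obstacle} is the third step: obtaining Hölder continuity of $\xi$ requires tracking the uniformity of the constants in Lemma \ref{lem:negative} and Corollary \ref{cor:negative} across base points and converting an $\tilde L^2$-closeness estimate on the blow-up profiles into Hölder control of the unit vector parameter, which is where the uniform non-degeneracy $c(\cdot) \geq c_0/2$ from the openness step is indispensable.
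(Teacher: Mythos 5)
Your overall strategy matches the paper's: use Proposition~\ref{prop:vanishing_order} and Proposition~\ref{prop:32} to produce uniform $3/2$-homogeneous blow-ups, establish openness of the regular set via the quantitative $W^{1,2}$-closeness to the model $h_{3/2}$, derive H\"older continuity of the blow-up direction $\xi(\cdot)$, and conclude with a graph/implicit-function argument. You also correctly flag the third step as the crux. However, the argument you propose for that step does not close. At scale $r=|y_0-y_1|$, the linear parts subtracted at $y_0$ and $y_1$ differ by $(b_{y_1}-b_{y_0})\,x_{n+1}$, and the best available a priori bound is $|b_{y_1}-b_{y_0}|\lesssim r^{1/2}$ (from the optimal $C^{1,1/2}$ regularity of Theorem~\ref{thm:Hoelder_opt}; one cannot do better without already knowing what you are trying to prove). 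After the rescaling by $r^{-3/2}$, this contributes a term of norm $O(1)$, not $o(1)$. Hence your chain of inequalities gives only $\|w_0^{y_0}-w_0^{y_1}\|_{\LL(B_1)}\leq C r^{\epsilon_0/2}+O(1)$, which is vacuous. The triangle inequality alone cannot produce $\|w_0^{y_0}-w_0^{y_1}\|\leq C r^{\gamma_0}$; a further structural observation is indispensable.

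The observation is orthogonality: every element of $\mathcal{E}$ is even in $x_{n+1}$, hence orthogonal in $L^2(B_r)$ (or $L^2(\partial B_r)$) to the odd function $x_{n+1}$, so the linear discrepancy does not pollute the projection onto $\mathcal{E}$. The paper implements this by keeping the term $\p_{n+1}v(x_0)\,x_{n+1}$ explicitly inside the estimate (its \eqref{eq:inequ_2}–\eqref{eq:inequ_3} on the common ball $B_r(0)$, with $r=4|x_0|$), and then using the explicit form of the profiles to read off two separate bounds: $|\p_{n+1}v(x_0)|\lesssim |x_0|^{1/2+\epsilon_0/2}$ and $|\nu(x_0,2r)-\nu(0,2r)|\lesssim |x_0|^{\epsilon_0/2}$. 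Your argument as written omits this splitting entirely, which is the only thing that makes the middle term harmless. A secondary gap: to apply Proposition~\ref{prop:32} and Corollary~\ref{cor:negative} at a base point $y_0\neq 0$, you must first conjugate by $A(y_0)^{1/2}$ (and the Ural'tseva off-diagonal change) so that the normalization (N) holds at $y_0$; mere translation and subtraction of $\ell_{y_0}$ does not restore (N), and the resulting blow-up is then only approximately in $\mathcal{E}$. The paper defines $v_{x_0}(x)=v(A(x_0)^{1/2}x+x_0)-b_{x_0}x_{n+1}$ precisely for this reason, and the affine change then also enters the final expression for the free boundary normal.
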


\begin{proof}
Without loss of generality we assume that $\p_{n+1}w(0)=0$. By Proposition~\ref{prop:vanishing_order} there exists $r_0\in (0,1/2)$ such that (after a rotation)
\begin{align*}
\left\|\frac{w(r_0\cdot)}{\|w\|_{\tilde{L}^2(B_{r_0})}}-h_{3/2}\right\|_{C^1(B_1^{\pm})}\leq \frac{\delta}{10},
\end{align*}
where $\delta>0$ is the constant from Proposition~\ref{prop:32}.
Let $v(x):=\frac{w(r_0\cdot)}{\|w\|_{\tilde{L}^2(B_{r_0})}}$.
For $x_0 \in \Gamma_w \cap B_{1/2}$ we further define
\begin{align*}
v_{x_0}(x):=v(A(x_0)^{1/2} x + x_0)-b_{x_0}x_{n+1},
\end{align*}
where
$b_{x_0}:= (a^{n+1,n+1}(x_0))^{1/2}
\lim\limits_{x\in \Omega_w, \ x\rightarrow x_0} \p_{n+1} v(x)$ (in particular, $b_0=0$ by the assumption
that $\p_{n+1}w(0)=0$).
For this family of functions we note that for $|x_0|\leq r_0:= \min\left\{ \frac{\delta^{2}}{64 (\|v\|_{{C}^{0,1/2}}+[a^{ij}]_{{C}^{0,\alpha}(B_1)})^2},\frac{1}{100}\right\}$ we have
\begin{align*}
\|v_{x_0} - h_{3/2}\|_{C^1(B_1^{\pm})}
&=\left\|v\left(A(x_0)^{1/2}\cdot+x_0\right) -b_{x_0}(\cdot)_{n+1}- h_{3/2}(\cdot)\right\|_{C^1(B_1^{\pm})}\\
& \leq \|v - h_{3/2}\|_{C^{1}(B_{1}^{\pm})}
+\left\|v(A(x_0)^{1/2}\cdot + x_0) - v\right\|_{C^{1}(B_{1}^{\pm})}
+|b_{x_0}|\\
&\leq\frac{\delta}{10} + 2[\nabla v]_{{C}^{0,1/2}(B_{1}^{\pm})}|x_0|^{1/2}+[a^{ij}]_{{C}^{0,\alpha}(B_1)}|x_0|^{\alpha}\\
& < \delta.
\end{align*}
As moreover by definition $|\nabla v_{x_0}(0)|=0$, Proposition \ref{prop:32} is applicable to $v_{x_0}$ and entails that the limit
\begin{align*}
 \mathcal{E} \ni p_{x_0}(x):=\lim\limits_{r\rightarrow 0} \bar{v}_{x_0,r}(x),
 \quad \bar{v}_{x_0,r}(x):= \frac{v_{x_0}(rx)}{r^{3/2}}
\end{align*}
exists for all $x_0\in \Gamma_w\cap B_{r_0}$. Here we have used the off-diagonal assumption (N)
of the metric $a^{ij}$ to conclude that $p_{x_0}\in \mathcal{E}$.
Moreover, Proposition \ref{prop:32} also implies that
\begin{align}
\label{eq:controlb}
 \|\bar{v}_{x_0,r}-p_{x_0}\|_{\LL(B_1)}\leq C  s^{\epsilon_0/2}
 \mbox{ for } r\in(0,1),\ |x_0|\leq r_0.
\end{align}
Let $\tilde{p}_{x_0,r}:= \overline{\Pr}(v_{x_0},r)$.
We note that for $r=4|x_0|$
\begin{align*}
& C r^{3/2+\epsilon_0/2}
\geq \| v_{x_0} -\tilde{p}_{x_0,2r}\|_{\tilde{L}^2(B_{2r}(0))}\\
&= \frac{1}{\det{A(x_0)}^{1/2}}\left\|v(\cdot) - \p_{n+1}v(x_0) (\cdot)_{n+1} -\tilde{p}_{x_0,2r}(L_{x_0}^{-1}(\cdot))\right\|_{\tilde{L}^2(L_{x_0}(B_{2r}(0))},
\end{align*}
where $L_{x_0}(x)=A(x_0)^{1/2}x+x_0$.
If $[a^{ij}]_{{C}^{0,\alpha}}$ is sufficiently small,
we have $B_r(0)\subset L_{x_0}(B_{2r}(0))$, since
$|A(x_0)-I|\leq [a^{ij}]_{{C}^{0,\alpha}}|x_0|^\alpha$. Thus,
\begin{align}
\label{eq:inequ_1}
\left\|v(\cdot) - \p_{n+1}v(x_0) (\cdot)_{n+1} -\tilde{p}_{x_0,2r}(L_{x_0}^{-1}(\cdot))\right\|_{\tilde{L}^2(B_{r}(0))}\leq C r^{3/2+\epsilon_0/2}.
\end{align}
Assume that $\tilde{p}_{x_0,2r}(y)= c(x_0, 2r)\Ree(y'\cdot \tilde{\nu}(x_0,2r)+iy_{n+1})^{3/2}$ and let $\nu(x_0,2r):=(A(x_0)^{1/2})^{-1}\tilde{\nu}(x_0,2r)/|(A(x_0)^{1/2})^{-1}\tilde{\nu}(x_0,2r)|$, then
\begin{align*}
p_{x_0,2r}(x)&:=\tilde{p}_{x_0,2r}(L^{-1}_{x_0}(x))\\
&=c(x_0,2r)\Ree\left( \frac{(x-x_0)\cdot \nu(x_0,2r)}{(\nu(x_0,2r)\cdot A(x_0)\nu(x_0,2r))^{1/2}}+ i \frac{x_{n+1}}{a^{n+1,n+1}(x_0)^{1/2}}\right)^{3/2}.
\end{align*}
 Thus, rewriting the inequality (\ref{eq:inequ_1}) in terms of $p_{x_0,2r}$,
 we have shown that for $r=4|x_0|$
 \begin{align}
 \label{eq:inequ_2}
 \left\|v(\cdot) - \p_{n+1}v(x_0) (\cdot)_{n+1} -p_{x_0,2r}(\cdot)\right\|_{\tilde{L}^2(B_{r}(0))}\leq C r^{3/2+\epsilon_0/2}.
 \end{align}
In particular, this is applicable at the origin
 \begin{align}
\label{eq:inequ_3}
 \left\|v(\cdot)  -p_{0,2r}(\cdot)\right\|_{\tilde{L}^2(B_{r}(0))}\leq C r^{3/2+\epsilon_0/2},
 \end{align}
where we used that $\p_{n+1}w(0)=0$.
Combining (\ref{eq:inequ_2}), (\ref{eq:inequ_3}) and using the triangle inequality,
we infer that
 \begin{align*}
 \left\| \p_{n+1}v(x_0) x_{n+1}+(p_{0,2r}(x)-p_{x_0,2r}(x))\right\|_{\tilde{L}^2(B_{r}(0))}\leq C r^{3/2+\epsilon_0/2}.
 \end{align*}
Using the explicit expression of $p_{x_0,2r}$ and that $c(x_0,2r)\geq c_n>0$
(which follows from Proposition~\ref{prop:32} and our normalization), we obtain
\begin{align*}
|\p_{n+1}v(x_0)|\leq C|x_0|^{1/2+\epsilon_0/2}, \quad |\nu(x_0,2r)- \nu(0,2r)|\leq C |x_0|^{\epsilon_0/2}.
\end{align*}
By \eqref{eq:controlb},
\begin{align*}
|\nu(x_0,2r)-\nu(x_0,0_+)|\leq C|x_0|^{\epsilon_0/2}, \quad |\nu(0,2r)-\nu(0,0_+)|\leq C|x_0|^{\epsilon_0/2}.
\end{align*}
Thus by the triangle inequality
\begin{align*}
|\nu(x_0,0_+)-\nu(0,0_+)|\leq C|x_0|^{\epsilon_0/2}.
\end{align*}
This however corresponds to the H\"older continuity of the normal
$\nu_{x_0}$ to $\Gamma_w \cap B_{r_0}'$, since $x_0$ with $|x_0|\leq r_0$
was arbitrary,
and hence concludes the argument.
\end{proof}

\begin{rmk}[Modifications for the \emph{boundary} thin obstacle problem]
\label{rmk:boundary}
In the \\
case of the \emph{boundary} thin obstacle problem (which is also called the Signorini problem) we can relax the regularity assumption on the coefficients $a^{ij}$, due to the even symmetry about $x_{n+1}$ (which implies that $\p_{n+1}w(x_0)=0$ for all $x_0\in \Gamma_w$). More precisely, going through our arguments one can show that, if $a^{ij}\in C^{0,\beta}$ for some $\beta\in (0,1)$ and if the inhomogeneity $g^i\in C^{0,\alpha}$ for some $\alpha\in (1/2,1)$, then the solution satisfies $\sup_{B_r(x_0)}|w|\leq Cr^{3/2}$ at each $x_0\in \Gamma_w\cap B_{1/2}$. \\
Indeed, to observe this, it suffices to note that in Section \ref{sec:almost_opt}, we do not have to subtract any linear order polynomial in order to infer the almost optimal regularity result from Proposition \ref{prop:Hoelder} because $|\nabla w|=0$ along the free boundary. This simplifies the argument substantially. It also entails that in the boundary thin obstacle analogue of (\ref{eq:G_i}) we directly infer that $a_0=0$ and may use that $|\nabla w|\leq C_{\epsilon} r^{1/2 - \epsilon}$ for any $\epsilon\in(0,1)$ to obtain the faster than $r^{1/2}$ decay rate of $G^i$ around the origin. This allows us to repeat the argument from Section \ref{sec:opt_reg} with the only $C^{0,\beta}$ regularity of $a^{ij}$. This results in the claimed improvements.\\
Moreover, if $a^{ij}\in C^{0,\beta}$ for some $\beta\in (0,1)$ and $g^i\in C^{0,\alpha}$ for some $\alpha\in (1/2,1)$, the free boundary of the \emph{boundary} thin obstacle problem is locally a $C^{1,\gamma}$ graph around each regular free boundary point $x_0$ (i.e. at points $x_0\in \Gamma_{w}\cap B_{1/2}'$ with $\kappa_{x_0}<1+\min\{\alpha, 1/2+\beta\}$).
\end{rmk}

\section*{Acknowledgements}
A.R. acknowledges a Junior Research Fellowship at Christ Church.
W.S. was partially supported by the Hausdorff Center for Mathematics and the Centre for Mathematics of the
University of Coimbra. Both authors would like to thank Herbert Koch for helpful discussions related to the project.

\section{Appendix}
\label{sec:app}
Last but not least, in this section we present a sketch of the proof of Lemma \ref{lem:class} following the
ideas of Andersson (c.f. \cite{An16}, Lemma 6.2).

\begin{proof}[Proof of Lemma \ref{lem:class}]
We argue in two steps and first discuss the behavior with respect to the tangential
variables $x_1,\cdots,x_{n-1}$,
which then, in a second step, allows us to reduce the problem to a problem in the two variables
$x_n, x_{n+1}$ only.\\

\emph{Step 1: Tangential variables.} We note that the countable set
of linearly independent eigenfunctions $\{l_k\}_{k\in \N}$ of the restriction of
(\ref{eq:problem}) onto the sphere $\partial B_1 =: S^n$ forms an orthonormal basis for $L^2(S^{n})$.
As a consequence any solution $w$ to (\ref{eq:problem}) can be decomposed into
\begin{align*}
 w(r,\theta) = \sum\limits_{k=1}^{\infty} \alpha_k(r) l_k(\theta),
\end{align*}
where $\theta \in S^n, r\in \R_+$. Orthogonality,
implies that $\alpha_k(r) = \tilde{\alpha}_k r^{\kappa(k)}$ with $\tilde{\alpha}_k\in \R$
being independent of $r$ and $\kappa(k)\in \R$.
Thus, without loss of generality, we may assume that
\begin{align*}
 w(x) = \sum\limits_{k=1}^{\infty} \tilde{a}_k q_k(x),
\end{align*}
where $q_k$ are homogeneous functions solving (\ref{eq:problem}) with some still
unknown homogeneity, which we seek to determine in the sequel. \\
Let hence $q$ be such a function with homogeneity $\kappa$. We differentiate $q$ $k$-times
in an arbitrary combination of the directions $e_1,\cdots,e_{n-1}$ and denote this by
$D^{\alpha} q$ for some multi-index $\alpha\in \N^{n+1}$ with $|\alpha|=k$, $\alpha_n =0=\alpha_{n+1}$.
Thus, choosing $k>\kappa$, we infer that $D^{\alpha} q$ is homogeneous of order $\kappa-k<0$.
By a difference quotient argument, regularity theory for the Dirichlet and Neumann problems and the
regularity of our domain, we obtain that $D^{\alpha}q\in W^{1,2}(B_1)\cap L^{\infty}(B_1)$.
Combining the boundedness with the negative homogeneity then implies that
$D^{\alpha}q = 0$.\\
Integrating this and using that $\alpha \in \N^{n+1}$ (with $|\alpha|=k$ and $\alpha_n =0=\alpha_{n+1}$)
was arbitrary in its first $(n-1)$ components, yields that $\kappa$ can be expressed as the sum of an integer and the
homogeneity of an arbitrary two-dimensional solution of (\ref{eq:problem}) in the variables
$x_n, x_{n+1}$. Hence it remains to investigate the two-dimensional
problem, in order to determine the full set of possible inhomogeneities.\\

\emph{Step 2: The two-dimensional problem.} A simple calculation in two-dimensional polar coordinates
shows that all solutions to (\ref{eq:problem}) are of the form
 \begin{align*}
v(x_n,x_{n+1})&= x_{n+1},\\
 v(x_n,x_{n+1}) &= \Ree(x_n + i|x_n|)^{\kappa}
 \mbox{ with } \kappa \in \left\{\frac{2n+1}{2}: n \in \N\right\},\\
 v(x_n,x_{n+1}) &= \Imm(x_n + i x_n)^{\kappa} \mbox{ with } \kappa \in \N.
\end{align*}
Combining this with the previous discussion concludes the proof.
\end{proof}

\begin{rmk}
 \label{rmk:class}
Going through the proof carefully, shows that it reveals more information than stated in Lemma \ref{lem:class}: The proof allows us to conclude that the only solution
of homogeneity $1/2$ is given by $\Ree(x_n + i|x_n|)^{1/2} $,
it shows that the solutions of homogeneity one are only linear polynomials
and it reveals the possible structure of solutions of homogeneity $3/2$ as being of the form
\begin{align*}
 v(x) = \Ree(x_n + i|x_n|)^{1/2}\sum\limits_{j=1}^{n-1}a_j x_j + b \Ree(x_n + i|x_n|)^{3/2}.
\end{align*}
We use this more detailed knowledge in Step 3b in the proof of Lemma \ref{lem:negative}.
\end{rmk}

\bibliography{citations}

\def\cprime{$'$} \def\cprime{$'$}
\begin{thebibliography}{GPSVG16}

\bibitem[AC06]{AC06}
Ioannis Athanasopoulos and Luis~A. Caffarelli.
\newblock Optimal regularity of lower-dimensional obstacle problems.
\newblock {\em Journal of Mathematical Sciences}, 132(3):274--284, 2006.

\bibitem[ACS08]{ACS08}
Ioannis Athanasopoulos, Luis~A. Caffarelli, and Sandro Salsa.
\newblock The structure of the free boundary for lower dimensional obstacle
  problems.
\newblock {\em American Journal of Mathematics}, 130(2):485--498, 2008.

\bibitem[And13]{An}
John Andersson.
\newblock Optimal regularity and free boundary regularity for the {S}ignorini
  problem.
\newblock {\em St. Petersburg Mathematical Journal}, 24(3):371--386, 2013.

\bibitem[And16]{An16}
John Andersson.
\newblock Optimal regularity for the {S}ignorini problem and its free boundary.
\newblock {\em Invent. Math.}, 204(1):1--82, 2016.

\bibitem[AU88]{AU}
A.~A. Arkhipova and N.~N. Ural{\cprime}tseva.
\newblock Regularity of the solution of a problem with a two-sided limit on a
  boundary for elliptic and parabolic equations.
\newblock {\em Trudy Mat. Inst. Steklov.}, 179:5--22, 241, 1988.
\newblock Translated in Proc. Steklov Inst. Math. {{\bf{1}}989}, no. 2, 1--19,
  Boundary value problems of mathematical physics, 13 (Russian).

\bibitem[Caf79]{Ca79}
Luis~A. Caffarelli.
\newblock Further regularity for the {S}ignorini problem.
\newblock {\em Communications in Partial Differential Equations},
  4(9):1067--1075, 1979.

\bibitem[CSS08]{CSS}
Luis~A. Caffarelli, Sandro Salsa, and Luis Silvestre.
\newblock Regularity estimates for the solution and the free boundary of the
  obstacle problem for the fractional {L}aplacian.
\newblock {\em Inventiones mathematicae}, 171(2):425--461, 2008.

\bibitem[DL76]{DL76}
G.~Duvaut and J.-L. Lions.
\newblock {\em Inequalities in mechanics and physics}.
\newblock Springer-Verlag, Berlin, 1976.
\newblock Translated from the French by C. W. John, Grundlehren der
  Mathematischen Wissenschaften, 219.

\bibitem[FH76]{FH76}
Shmuel Friedland and Walter~K. Hayman.
\newblock Eigenvalue inequalities for the {D}irichlet problem on spheres and
  the growth of subharmonic functions.
\newblock {\em Commentarii Mathematici Helvetici}, 51(1):133--161, 1976.

\bibitem[FS16]{FS16}
Matteo Focardi and Emanuele Spadaro.
\newblock An epiperimetric inequality for the thin obstacle problem.
\newblock {\em Adv. Differential Equations}, 21(1-2):153--200, 2016.

\bibitem[GP09]{GP09}
Nicola Garofalo and Arshak Petrosyan.
\newblock Some new monotonicity formulas and the singular set in the lower
  dimensional obstacle problem.
\newblock {\em Inventiones mathematicae}, 177(2):415--461, 2009.

\bibitem[GPSVG16]{GPSVG15}
Nicola Garofalo, Arshak Petrosyan, and Mariana Smit Vega~Garcia.
\newblock An epiperimetric inequality approach to the regularity of the free
  boundary in the {S}ignorini problem with variable coefficients.
\newblock {\em J. Math. Pures Appl. (9)}, 105(6):745--787, 2016.

\bibitem[GSVG14]{GSVG14}
Nicola Garofalo and Mariana Smit Vega~Garcia.
\newblock New monotonicity formulas and the optimal regularity in the
  {S}ignorini problem with variable coefficients.
\newblock {\em Advances in Mathematics}, 262:682--750, 2014.

\bibitem[Gui09]{G09}
Nestor Guillen.
\newblock Optimal regularity for the {S}ignorini problem.
\newblock {\em Calculus of Variations and Partial Differential Equations},
  36(4):533--546, 2009.

\bibitem[Kin81]{Ki81}
David Kinderlehrer.
\newblock The smoothness of the solution of the boundary obstacle problem.
\newblock {\em J. Math. Pures et Appl.}, 60(2):193--212, 1981.

\bibitem[KRS15]{KRSI}
Herbert Koch, Angkana R{\"u}land, and Wenhui Shi.
\newblock The variable coefficient thin obstacle problem: Optimal regularity,
  free boundary regularity and first order asymptotics.
\newblock {\em Arxiv preprint, arXiv:1504.03525, to appear in Annales de
  l'Institut Henri Poincare}, 2015.

\bibitem[KRS16]{KRS14}
Herbert Koch, Angkana R{\"u}land, and Wenhui Shi.
\newblock The variable coefficient thin obstacle problem: Carleman estimates.
\newblock {\em Advances in Mathematics}, 301(1):820 -- 866, 2016.

\bibitem[Lew72]{Le72}
Hans Lewy.
\newblock On the coincidence set in variational inequalities.
\newblock {\em Journal of Differential Geometry}, 6(4):497--501, 1972.

\bibitem[Lio76]{L76}
J.-L. Lions.
\newblock Some topics on variational inequalities and applications.
\newblock In {\em New developments in differential equations ({P}roc. 2nd
  {S}cheveningen {C}onf., {S}cheveningen, 1975)}, pages 1--38, North--Holland
  Math. Studies, Vol. 21. North-Holland, Amsterdam, 1976.

\bibitem[LS69]{LS69}
Hans Lewy and Guido Stampacchia.
\newblock On the regularity of the solution of a variational inequality.
\newblock {\em Comm. Pure Appl. Math.}, 22:153--188, 1969.

\bibitem[LU68]{LU}
Olga~A. Ladyzhenskaya and Nina~N. Ural{\cprime}tseva.
\newblock {\em Linear and quasilinear elliptic equations}.
\newblock Translated from the Russian by Scripta Technica, Inc. Translation
  editor: Leon Ehrenpreis. Academic Press, New York-London, 1968.

\bibitem[PSU12]{PSU}
Arshak Petrosyan, Henrik Shahgholian, and Nina~N. Uraltseva.
\newblock {\em Regularity of {F}ree {B}oundaries in {O}bstacle-{T}ype
  {P}roblems}.
\newblock AMS, 2012.

\bibitem[Ric78]{Ri78}
David Richardson.
\newblock {\em Variational problems with thin obstacles}.
\newblock PhD thesis, University of British Columbia, 1978.

\bibitem[Ura87]{Ura87}
Nina~Nikolaevna Ural'tseva.
\newblock Regularity of solutions of variational inequalities.
\newblock {\em Russian Mathematical Surveys}, 42(6):191--219, 1987.

\bibitem[Ura89]{U89}
Nina~Nikolaevna Ural'tseva.
\newblock An estimate of the derivatives of the solutions of variational
  inequalities on the boundary of the domain.
\newblock {\em Journal of Soviet Mathematics}, 45(3):1181--1191, 1989.

\bibitem[Wei99]{Weiss99}
Georg~S. Weiss.
\newblock A homogeneity improvement approach to the obstacle problem.
\newblock {\em Invent. Math.}, 138(1):23--50, 1999.

\end{thebibliography}
\bibliographystyle{alpha}

\end{document}